\newcommand{\mb}[1]{\mathbb{#1}} 
\newcommand{\mc}[1]{\mathcal{#1}} 
\DeclareMathOperator{\Aut}{Aut}
\DeclareMathOperator{\coker}{Coker}
\DeclareMathOperator{\Ker}{Ker}
\DeclareMathOperator{\End}{End}
\DeclareMathOperator{\ext}{Ext}
\DeclareMathOperator{\lgr}{-gr}
\DeclareMathOperator{\rgr}{gr-}
\DeclareMathOperator{\CM}{CM}
\DeclareMathOperator{\Hom}{Hom}
\DeclareMathOperator{\im}{Im}
\DeclareMathOperator{\Tr}{Tr}
\DeclareMathOperator{\lmod}{-mod}
\DeclareMathOperator{\rmod}{mod-}
\DeclareMathOperator{\soc}{Soc}
\DeclareMathOperator{\perf}{perf}
\DeclareMathOperator{\gldim}{gldim}
\DeclareMathOperator{\idim}{inj.dim}
\DeclareMathOperator{\pdim}{proj.dim}
\DeclareMathOperator{\Id}{Id}
\newcommand{\psmod}{\operatorname{-\underline{mod} }\nolimits}
\newcommand{\ismod}{\operatorname{-\overline{mod} }\nolimits}
\newcommand{\pshom}{\operatorname{\underline{Hom}}\nolimits}
\newcommand{\kdim}{\operatorname{dim_\Bbbk}\nolimits}
\newcommand{\bsm}{\begin{smallmatrix}}
\newcommand{\esm}{\end{smallmatrix}}
\theoremstyle{plain}
 \newtheorem{thm}{Theorem}[section]
 \newtheorem{thmintro}{Theorem}
 \newtheorem{prop}[thm]{Proposition}
 \newtheorem{lemma}[thm]{Lemma}
 \newtheorem{cor}[thm]{Corollary}
\theoremstyle{definition}
 \newtheorem{defin}[thm]{Definition}
 \newtheorem*{notation}{Notation}
\theoremstyle{remark}
 \newtheorem{rmk}[thm]{Remark}
 \newtheorem{exm}[thm]{Example}
\date{\today}
\begin{document}

\title{Stably Calabi-Yau properties of derivation quotient algebras}

\author{Gabriele Bocca} 

\address{Department of Mathematics, Uppsala Universitet, \r{A}ngstr\"{o}m laboratory, L\"{a}gerhyddsv\"{a}gen 1, 752 37 Uppsala, Sweden}

\email{gabriele.bocca@math.uu.se}

\begin{abstract} The aim of this paper is to study bimodule stably Calabi-Yau properties of derivation quotient algebras. We give the definition of a twisted stably Calabi-Yau algebra and show that every twisted derivation quotient algebra $A$ for which the associated bimodule complex gives the beginning of a bimodule resolution for $A$ is bimodule stably twisted Calabi-Yau. In this setting we give a new interpretation of some results by Yu \cite{yu}, implying that $A$ is almost Koszul of periodic type. Using the characterization of higher preprojective algebras given by Amiot and Oppermann in \cite{AmOp}, we prove that finite dimensional bigraded derivation quotient algebras with homogeneous potential and exact associated complex are higher preprojective algebras of their degree-zero subalgebra, which is Koszul and $(d-1)$-representation finite.   \end{abstract}

\maketitle

\noindent MSC (2010): 16E65, 16G20, 16S37

\noindent\keywords{Keywords: Derivation quotient algebras, Koszul algebras, Stably Calabi-Yau properties}

\tableofcontents

\section*{Introduction}
Quiver with potentials have been studied by many authors and are a useful tool in many areas of mathematics as well as physics (\cite{seiberg}). In connection with represenatation theory they have played a central role in the study of Fomin and Zlevinsky cluster algebras and their categorification (\cite{birs}, \cite{dwz1}, \cite{dwz2}). They also give rise to Ginzburg dg-algebras \cite{ginzburg}, which are Calabi-Yau of dimension $3$. The Jacobian algebra of a quiver with potential is defined as the path algebra of the quiver modulo the ideal generated by formal partial derivatives of the potential, that is in turn a linear combination of cycles in the quiver. This algebra coincides with the zero-homology of the Ginzburg dg-algebra associated to the quiver with potential. There is a strong connection between Jacobian algebras and (graded) $3$-Calabi-Yau algebras. For instance it has been proved by \cite{bock} that graded $3$-Calabi-Yau algebras are Jacobian algebras of some quivers with potentials. This connection has been studied more in general in \cite{BSW}, where the authors give a more theoretical construction for higher Jacobian algebras to include non-basic algebras, calling them \emph{derivation quotient algebras}. Associated to the data of a derivation quotient algebra $A$ there is a complex of projective bimodules:
\[\mc{W}^\bullet:= \quad 0\to A\otimes W_{d}\otimes A \xrightarrow{D_{d}} A\otimes W_{d-1}\otimes A \xrightarrow{D_{d-1}}\cdots \xrightarrow{D_1} A\otimes W_0 \otimes A\to 0\]

This complex is not exact in general but it is always self-dual, under the bimodule duality $\Hom_{A^e}(-,A^e)$:
\[\Hom_{A^e}(A\otimes W_{i}\otimes A,A^e) \cong A\otimes W_{d-i}\otimes A,\]
where $d$ is the degree of the superpotential defining $A$. The above duality generalizes to a twisted self-duality for twisted derivation algebras.

Exactness of the complex $\mc{W}^\bullet$ at different degrees gives precious informations about the algebra $A$ and its bimodule projective resolution. The following results generalizes facts already known for selfinjective Jacobian algebras (\cite{HerIy}).

\begin{thmintro}[Theorem \ref{thm:self derivat alg}]
Let $A=\mc{D}_\varphi(\omega,d-k)$ be a finite dimesional twisted derivation algebra with $|\omega|=d$ and such that the associated complex $\mc{W}^\bullet$ satisfies $H^0(\mc{W}^\bullet)\cong A$ and it is exact in degree one: 
\[ 0\to A\otimes W_{d}\otimes A \xrightarrow{D_{d}} \cdots \xrightarrow{D_{2}}A\otimes W_{1}\otimes A  \xrightarrow{D_1} A\otimes W_0 \otimes A \to 0. 
\]

Then $A$ is selfinjective if and only if the complex $\mc{W}^\bullet$ is also exact at $A\otimes W_{d-1}\otimes A$.

If moreover $A$ is Frobenius with $A=\bigoplus_{i=0}^pA_p$ with $A_p\neq 0$ then $\Ker D_d\cong {}_{\varphi\eta}A\langle d+p\rangle$ where $\eta$ is a Nakayama automorphism of $A$.
\end{thmintro}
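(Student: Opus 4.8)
The plan is to write $P_i=A\otimes W_i\otimes A$, so that $\Ker D_d=H^d(\mc{W}^\bullet)$ and exactness of $\mc{W}^\bullet$ at $A\otimes W_{d-1}\otimes A$ means precisely $H^{d-1}(\mc{W}^\bullet)=0$. The first step is to observe that the hypotheses $H^0(\mc{W}^\bullet)\cong A$ and exactness in degree one make
\[ P_2\xrightarrow{D_2}P_1\xrightarrow{D_1}P_0\to A\to0 \]
exact, so $\mc{W}^\bullet$ is the beginning of a projective $A^e$-resolution of $A$. Consequently $\Hom_{A^e}(\mc{W}^\bullet,A^e)$ computes $\ext^0_{A^e}(A,A^e)$ and $\ext^1_{A^e}(A,A^e)$ in cohomological degrees $0$ and $1$ (this uses only the exactness at $P_0$ and $P_1$ just established). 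Next I would apply the twisted self-duality recalled above, $\Hom_{A^e}(\mc{W}^\bullet,A^e)\cong{}_\varphi\mc{W}^\bullet$, which reverses the indexing $i\mapsto d-i$ and carries the internal shift $\langle d\rangle$ coming from $|\omega|=d$. Comparing cohomology degree by degree yields
\[ H^{d-1}(\mc{W}^\bullet)\cong{}_\varphi\ext^1_{A^e}(A,A^e),\qquad \Ker D_d=H^d(\mc{W}^\bullet)\cong{}_\varphi\Hom_{A^e}(A,A^e)\langle d\rangle. \]
This reduces the biconditional to the statement that $A$ is selfinjective if and only if $\ext^1_{A^e}(A,A^e)=0$.

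For the forward implication I would argue as follows. If $A$ is selfinjective then $A^e=A\otimes A^{op}$ is selfinjective as well, since a tensor product of Frobenius algebras over a field is Frobenius. Hence the regular bimodule $A^e$ is \emph{injective} as an $A^e$-module, and therefore $\ext^i_{A^e}(A,A^e)=0$ for every $i>0$; in particular $\ext^1_{A^e}(A,A^e)=0$, so by the first paragraph $\mc{W}^\bullet$ is exact at $A\otimes W_{d-1}\otimes A$. This direction is clean and requires no further structure.

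For the converse, exactness at $A\otimes W_{d-1}\otimes A$ gives $\ext^1_{A^e}(A,A^e)=0$, and the task is to promote this to selfinjectivity. The plan is to realise $A$ as a twisted periodic algebra: using the self-duality to feed the exactness at the two ends of $\mc{W}^\bullet$ (degrees $1$ and $d-1$) through the intermediate degrees, one upgrades $\mc{W}^\bullet$ to a self-dual exact sequence $0\to\Ker D_d\to P_d\to\cdots\to P_0\to A\to0$, whence $\Ker D_d\cong\Omega^{d+1}_{A^e}(A)$; combining this with $\Ker D_d\cong{}_\varphi\Hom_{A^e}(A,A^e)\langle d\rangle$ and with the nondegeneracy of the pairing inducing the self-duality, one identifies $\Ker D_d$ with an invertible rank-one twisted bimodule ${}_\sigma A$, so that $\Omega^{d+1}_{A^e}(A)\cong{}_\sigma A$, and a finite-dimensional algebra whose regular bimodule is $\Omega$-periodic up to twist is selfinjective. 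I expect the main obstacle to be exactly this last step, namely showing that $\ext^1_{A^e}(A,A^e)=0$ forces $\Hom_{A^e}(A,A^e)$ to be a twist of $A$ rather than a proper sub-bimodule (equivalently, that the self-duality pairing is nondegenerate on $\Ker D_d$); this is where the derivation-quotient structure and the hypothesis that $\mc{W}^\bullet$ begins a resolution must be used in full, mirroring the selfinjective Jacobian case of Herschend--Iyama.

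Finally, for the additional statement I would assume $A$ Frobenius with $A=\bigoplus_{i=0}^pA_i$, $A_p\neq0$, and Nakayama automorphism $\eta$, and start from $\Ker D_d\cong{}_\varphi\Hom_{A^e}(A,A^e)\langle d\rangle$ obtained above. As in the forward direction $A^e$ is Frobenius, so $A^e\cong D(A^e)$ up to the Nakayama twist of $A^e$, and the standard isomorphism $\Hom_{A^e}(A,D(A^e))\cong D(A)$ exhibits $\Hom_{A^e}(A,A^e)$ as a twist of $DA$. For a graded Frobenius algebra the graded Nakayama bimodule is $DA\cong{}_1A_\eta\langle p\rangle$, the shift $\langle p\rangle$ recording that the Frobenius form pairs $A_i$ with $A_{p-i}$; hence $\Hom_{A^e}(A,A^e)\cong{}_\eta A\langle p\rangle$. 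Substituting and composing the two twists and the two internal shifts then gives
\[ \Ker D_d\cong{}_{\varphi\eta}A\langle d+p\rangle, \]
as claimed. The only delicate point here is the bookkeeping of the internal shifts ($\langle d\rangle$ from $|\omega|=d$ and $\langle p\rangle$ from the Frobenius form) and checking that the self-duality twist $\varphi$ and the Nakayama twist $\eta$ compose to $\varphi\eta$.
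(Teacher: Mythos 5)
Your dualization of the bimodule complex is sound as far as it goes: given $H^0(\mc{W}^\bullet)\cong A$ and exactness in degree one, the complex $\Hom_{A^e}(\mc{W}^\bullet,A^e)$ really does compute $\Hom_{A^e}(A,A^e)$ and $\ext^1_{A^e}(A,A^e)$ in cohomological degrees $0$ and $1$, and the twisted self-duality of Lemma \ref{lemma:selfdual complex} identifies these with $\Ker D_d$ and $H^{d-1}(\mc{W}^\bullet)$ up to the twist and the shift $\langle -d\rangle$. The forward implication is correct, though your justification needs a touch-up: "tensor product of Frobenius algebras is Frobenius" presumes $A$ Frobenius, which a selfinjective algebra need not be; argue instead that $(A^e)^*\cong A^*\otimes (A^{op})^*$ is projective because $A^*$ is. Your Frobenius addendum is also essentially right and, notably, needs no selfinjectivity beyond the hypothesis: $(\Ker D_d)_\varphi\langle -d\rangle\cong\Hom_{A^e}(A,A^e)$ holds already, and substituting $\Hom_{A^e}(A,A^e)\cong{}_\eta A\langle p\rangle$ (the paper's Lemma \ref{lemma: envel frob}) gives the claim — watch only that the untwist lands on the correct side, $({}_\eta A)_{\varphi^{-1}}\cong{}_{\varphi\eta}A$ rather than ${}_\varphi({}_\eta A)={}_{\eta\varphi}A$.

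The genuine gap is the backward implication, exactly where you suspected, and your proposed repair is circular. You reduce it to "$\ext^1_{A^e}(A,A^e)=0$ implies $A$ selfinjective" and then plan to "feed the exactness at the two ends through the intermediate degrees" to get $\Ker D_d\cong\Omega^{d+1}_{A^e}(A)$; but that propagation is precisely Corollary \ref{cor:selfinj W ex}, whose proof uses selfinjectivity (it needs $\ext^i_A(-,A)=0$ for all $i>0$), and nothing in the hypotheses gives exactness of $\mc{W}^\bullet$ in the middle degrees — indeed the theorem never asserts it. Likewise, without selfinjectivity there is no reason for $\Hom_{A^e}(A,A^e)$ (which is just $\ext^0_A(A^*,A)$ in disguise) to be an invertible twisted bimodule ${}_\sigma A$, so the appeal to twisted periodicity (Theorem \ref{thm: periodic selfinj}) cannot get off the ground. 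The paper sidesteps the bimodule $\ext$-criterion entirely: it applies $-\otimes_AS$ to obtain the start of a graded projective resolution of ${}_AS$, uses the one-sided criterion that a finite dimensional algebra is selfinjective iff $\ext^1_{A\lgr}(S,A)=0$, identifies $\Hom_{A\lgr}(A\otimes W_i,A)\cong W_{d-i}\otimes A_\varphi\langle -d\rangle$ via the twisted self-duality, and translates the resulting exactness of the one-sided complex at position $d-1$ back to the two-sided complex by Lemma \ref{lemma:one sided complexes}. Since every step in that chain is an equivalence, both implications come simultaneously and the hard bimodule implication you flagged never has to be proved; replacing your bimodule reduction by this one-sided reduction is the missing idea.
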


It is also proved in \cite{BSW} that $\mc{W}^\bullet$ is a subcomplex of the Koszul complex of $A$ and this leads to the following result:

\begin{thmintro}[\cite{BSW}, Theorem 6.2]\label{thm 1}
An algebra $A$ is Koszul and twisted Calabi-Yau if and only if $A$ is a twisted derivation quotient algebra such that $\mc{W}^\bullet$ is a bimodule projective resolution of $A$.
\end{thmintro}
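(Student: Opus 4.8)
The plan is to prove the two implications separately, exploiting the self-duality of $\mc{W}^\bullet$ in the ``if'' direction and Koszul duality together with a superpotential-reconstruction argument in the ``only if'' direction.

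For the direction $(\Leftarrow)$, suppose $A=\mc{D}_\varphi(\omega,d-k)$ is a twisted derivation quotient algebra whose associated complex $\mc{W}^\bullet$ is a bimodule projective resolution of $A$. Koszulity is then almost immediate: each term $A\otimes W_i\otimes A$ is generated in internal degree $i$, so $\mc{W}^\bullet$ is a \emph{linear} projective resolution of the diagonal bimodule, and a graded algebra admitting such a resolution is Koszul by definition. For the twisted Calabi-Yau property I would apply $\Hom_{A^e}(-,A^e)$ to the resolution $\mc{W}^\bullet$ to compute $\ext^\bullet_{A^e}(A,A^e)$. The twisted self-duality recorded in the excerpt, $\Hom_{A^e}(A\otimes W_i\otimes A,A^e)\cong A\otimes W_{d-i}\otimes A$, identifies the dual complex, after reindexing $i\mapsto d-i$ and twisting by $\varphi$, with $\mc{W}^\bullet$ itself. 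Since $\mc{W}^\bullet$ is a resolution (exact except in degree zero, where the homology is $A$), its dual is exact except in top cohomological degree $d$, where the cohomology is a twist of $A$. This says precisely that $\ext^i_{A^e}(A,A^e)=0$ for $i\neq d$ and $\ext^d_{A^e}(A,A^e)\cong A_\nu$ up to a grading shift, with the Nakayama automorphism $\nu$ read off from $\varphi$; equivalently $\mathrm{RHom}_{A^e}(A,A^e)\cong A_\nu[-d]$. Homological smoothness follows from the finite length of $\mc{W}^\bullet$, so $A$ is Koszul and twisted Calabi-Yau of dimension $d$.

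For the direction $(\Rightarrow)$, suppose $A$ is Koszul and twisted Calabi-Yau of dimension $d$. Koszulity forces the minimal projective bimodule resolution of $A$ to be the Koszul bimodule complex $K^\bullet$, whose $i$-th term is $A\otimes W_i\otimes A$ with $W_i=\bigcap_j A_1^{\otimes j}\otimes R\otimes A_1^{\otimes(i-2-j)}$ the usual space of Koszul relations and $R\subseteq A_1\otimes A_1$ the relations of $A$. The twisted Calabi-Yau hypothesis forces this resolution to have length exactly $d$ and supplies a quasi-isomorphism $\Hom_{A^e}(K^\bullet,A^e)\simeq K^\bullet$ up to shift by $d$ and twist by $\nu$; concretely this yields $\dim_\Bbbk W_d=\dim_\Bbbk W_0$ together with a nondegenerate pairing identifying $W_i$ with a twist of $W_{d-i}$. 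The crucial construction is then to extract from the top space $W_d$ a twisted superpotential $\omega$: the Calabi-Yau class in $\ext^d_{A^e}(A,A^e)$ singles out a distinguished $\varphi$-invariant element $\omega\in W_d$, and I would show that the Koszul differentials $D_i$ coincide with the formal partial-derivative maps attached to $\omega$, so that $A=\mc{D}_\varphi(\omega,d-k)$ and $K^\bullet=\mc{W}^\bullet$.

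I expect the reconstruction of the superpotential to be the main obstacle. Matching the abstract Calabi-Yau duality with the concrete combinatorial operation of taking formal partial derivatives requires tracking the Koszul pairing through all degrees, showing that $R$ equals the span of the first derivatives $\partial_a\omega$ and that each higher differential is the corresponding higher derivative of $\omega$; in the twisted setting one must simultaneously propagate the automorphism $\varphi$ through these identifications so that it agrees with the Nakayama automorphism $\nu$. This is the twisted, bimodule-level analogue of Bocklandt's reconstruction theorem, and once the superpotential and the identification $K^\bullet=\mc{W}^\bullet$ are established, the assertion that $\mc{W}^\bullet$ is a resolution is just a restatement of Koszulity.
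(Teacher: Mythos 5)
The paper offers no proof of this statement---it is imported verbatim from \cite{BSW}, Theorem 6.2---and your sketch follows that source's argument exactly: in one direction the twisted self-duality of $\mc{W}^\bullet$ (Lemma \ref{lemma:selfdual complex}) yields both Koszulity (the resolution is linear, and restricts along $-\otimes_A S$ to a linear resolution of $A_0$) and $\ext^i_{A^e}(A,A^e)=0$ for $i\neq d$ with $\ext^d_{A^e}(A,A^e)\cong A_\varphi$; in the other, one identifies the minimal bimodule resolution with the Koszul complex and reconstructs a twisted superpotential from the top dualizing term, exactly as in \cite{BSW}. Your outline is therefore correct in approach; the only slips worth flagging are that, since Koszulity forces $A$ to be quadratic, the relation space is $R=W_2$, spanned by the derivatives $\partial_p\omega$ with respect to paths of length $d-2$ (so $k=d-2$ in $\mc{D}_\varphi(\omega,d-k)$), not by first derivatives unless $d=3$, and that $\omega$ is not ``$\varphi$-invariant'' but rather satisfies the twisted commutation and cyclicity conditions of Definition \ref{defin:superpotential}.
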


In the case of selfinjective algebras, the \emph{stably Calabi-Yau} property has been introduced and studied for example in \cite{bial-skow},\cite{dugas} and \cite{erd-skow} in the classification of finite and tame representation type algebras. In \cite{yu} the author studies the connection between almost Koszul (Frobenius) stably Calabi-Yau algebras and twisted derivation algebras with some exactness conditions on the associated complex. In particular Yu proves that a Frobenius algebra $A$ is $(p,q)$-Koszul and if and only if it is isomorphic to a twisted derivation algebra such that the associated complex gives the first $q$ terms of a bimodule projective resoltion for $A$. Since a twist appears in this construction, the algebra $A$ is not automatically stably Calabi-Yau in this case, but conditions for this to be true are also described.

The first goal of this paper is to study twisted Calabi-Yau properties for the stable category of modules over selfinjective algebras and give an interpretation of the results from \cite{yu} under this point of view. A preliminary step in this direction is given by the following fact:

\begin{thmintro}[Theorem \ref{thm:selfinj dual}]
Let $A$ be a selfinjective $(p,q)$-Koszul algebra. Then $A_p\otimes A_q^{!*}\simeq A_0\langle p+q\rangle$ as graded left $A$-modules. In particular $A$ is of periodic type.
\end{thmintro}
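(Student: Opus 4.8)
The plan is to compute the minimal graded projective resolution of the semisimple module $A_0$ and to identify its $(q+1)$-st syzygy. By the $(p,q)$-Koszul hypothesis the first $q$ steps of this resolution are linear, so that for $0\le i\le q$ the $i$-th projective has the form $P^i=A\otimes_{A_0}A_i^{!*}$ with its generators sitting in degree $i$, where $A_i^!=\ext^i_A(A_0,A_0)$ is the degree-$i$ part of the Koszul dual. The object of interest is therefore $\Omega^{q+1}(A_0)=\Ker(d^q\colon P^q\to P^{q-1})$, and the whole statement will follow once this kernel is identified with $A_0\langle p+q\rangle$.

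First I would record the consequences of graded selfinjectivity. Since $A$ is a graded Frobenius algebra whose Frobenius form is homogeneous of degree $-p$, the socle of each indecomposable projective is simple and concentrated in the top degree $p$; writing $\nu$ for the Nakayama permutation one has $\soc(Ae_i)=S_{\nu(i)}$ in degree $p$, so that $\soc(A)=A_p$ and $A_p\cong A_0$ as left $A_0$-modules up to the permutation $\nu$. Applying this to $P^q=A\otimes_{A_0}A_q^{!*}$ gives $\soc(P^q)=A_p\otimes_{A_0}A_q^{!*}$, a semisimple module concentrated in degree $p+q$.

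Next I would show that $\Ker d^q$ is exactly this socle. The inclusion $\soc(P^q)\subseteq\Ker d^q$ is automatic: $d^q$ is degree-preserving and $P^{q-1}$ vanishes in degrees above $p+q-1$, so the top degree-$(p+q)$ part of $P^q$, which is precisely $A_p\otimes_{A_0}A_q^{!*}$, is forced into the kernel. For the reverse inclusion I would invoke the defining property of a $(p,q)$-Koszul algebra, namely that $\Omega^{q+1}(A_0)$ is generated in degree $p+q$; since $P^q$ has no components above degree $p+q$, being generated there means being concentrated there. Combining the two inclusions yields $\Omega^{q+1}(A_0)=A_p\otimes_{A_0}A_q^{!*}$, placed in degree $p+q$.

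It remains to identify this semisimple module with $A_0$, and this is the step I expect to be the main obstacle. Writing $A_q^{!*}\cong\bigoplus_i S_i^{m_i}$ as a left $A_0$-module, the socle computation above gives $\Omega^{q+1}(A_0)\cong\bigoplus_i S_{\nu(i)}^{m_i}$, so the claim is equivalent to each multiplicity $m_i$ being $1$, i.e.\ to $A_q^{!*}\cong A_0$. Here I would use the almost-Koszul duality: $A^!$ is $(q,p)$-Koszul and inherits selfinjectivity from $A$, so the socle argument of the second paragraph applied to $A^!$ shows that its top component $A_q^!$ is isomorphic to $(A^!)_0=A_0$ as an $A_0$-module. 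This forces $m_i=1$ for all $i$ and hence $A_p\otimes_{A_0}A_q^{!*}\cong A_0\langle p+q\rangle$ as graded left $A$-modules, as claimed. Periodicity is then immediate: the isomorphism $\Omega^{q+1}(A_0)\cong A_0\langle p+q\rangle$, possibly twisted by the permutation $\nu$, shows that every simple $A$-module is $\Omega$-periodic, so $A$ is of periodic type. The delicate point throughout is the passage of selfinjectivity to $A^!$; if one prefers to avoid it, the same conclusion $A_q^{!*}\cong A_0$ can be read off from the normalization of the top term $W_q$ of the associated complex in the derivation-quotient description of $A$.
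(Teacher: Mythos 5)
Your first half is sound: the identification of $\Omega^{q+1}_A(A_0)$ with the semisimple module $A_p\otimes A_q^{!*}$ concentrated in degree $p+q$ is exactly Lemma \ref{lemma: kosz compl} of the paper (going back to Brenner--Butler--King), and your degree argument recovers it. The genuine gap is in the second half, at precisely the point you flag as delicate: the claim that $A^!$ inherits selfinjectivity from $A$, from which you deduce $A_q^{!*}\cong A_0$ and multiplicity one. Within this paper's logical development that claim is not available, because the Frobenius property of $A^!$ (Proposition \ref{prop: frob period dual}) is deduced \emph{from} Theorem \ref{thm:selfinj dual} together with Yu's Proposition 3.3, and the latter transfers the Frobenius property across Koszul duality only for algebras \emph{of periodic type} --- that is, only once the isomorphism $A_p\otimes A_q^{!*}\cong A_0\langle p+q\rangle$ you are trying to prove is already known. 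Your proposed fallback, reading $A_q^{!*}\cong A_0$ off the top term $W_q$ of the derivation-quotient presentation, runs into the same circle: that presentation (Theorem \ref{thm: 3.7}, Yu's Theorem 3.7) originally carries the periodic-type hypothesis, and the stated purpose of Theorem \ref{thm:selfinj dual} in this paper is to remove it. Two smaller problems compound this. The theorem assumes only that $A$ is selfinjective, not graded Frobenius, so your opening use of a homogeneous Frobenius form of degree $-p$, giving $\soc(Ae_i)=S_{\nu(i)}$ concentrated in degree $p$, is an unjustified strengthening: the paper explicitly works with possibly non-basic algebras, where selfinjective does not imply Frobenius (compare Theorem \ref{thm: selfinj} and Corollary \ref{cor: frob is self}). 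And over an algebraically closed field $A_0\cong\bigoplus_i S_i^{\dim_\Bbbk S_i}$ as a left module over itself, so ``each $m_i=1$'' is not what $A_q^{!*}\cong A_0$ amounts to unless $A$ is basic; the correct condition is a matching of multiplicities, which your argument does not supply.

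The paper's proof sidesteps the Koszul dual entirely with a short stable-category argument: since $A$ is selfinjective, $\Omega_A$ is an autoequivalence of $A\psmod$, so for each simple $T$ the module $\Omega^{q+1}_A(T)$ is non-zero and indecomposable; being a direct summand of the semisimple module $W\simeq A_p\otimes A_q^{!*}$, it must itself be simple, whence $\Omega^{q+1}_A$ induces a bijection on isomorphism classes of simples and $\Omega^{q+1}_A(A_0)\simeq A_0\langle p+q\rangle$. To salvage your route you would have to prove the transfer of selfinjectivity to $A^!$ by means independent of periodic type, which is harder than the theorem itself; the indecomposability argument is both shorter and valid at the level of generality actually claimed.
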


This implies that the main results of \cite{yu} actually hold for Frobenius almost Koszul algebras, since such algebras are automatically of periodic type. 

Using our terminology we provide an analogue of Theorem \ref{thm 1} for Frobenius almost Koszul algebras and, since in this case the quadratic dual algebra $A^!$ is again Frobenius, it is possible to describe explicitly the twists in terms of the Nakayama automorphisms of $A$ and $A^!$.

\begin{thmintro}[Corollary \ref{coroll: frob stabCY}]
Let $A$ be a Frobenius $(p,q)$-Koszul algebra or equivalently $A\cong\mc{D}_{\varphi}(\omega,q-2)$ with $\mc{W}^\bullet$ exact apart from in degrees $0$ and $d$.  Then $A$ is twisted stably Calabi-Yau of dimension $q$ with respect to the twist $\varphi=\xi^{q+1}\bar{\beta}$ up to inner automorphism, where $\bar{\beta}$ is the automorphism induced by the Nakayama automorphism $\beta$ of $A^!$ and $\xi$ acts as the multiplication by $(-1)^m$ on $A_m$.
\end{thmintro}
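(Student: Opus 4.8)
The plan is to extract the top nonzero syzygy of $A$ as an $A$-bimodule from the complex $\mc{W}^\bullet$, and then to rewrite the resulting twist in terms of the Frobenius data of the quadratic dual $A^!$.

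First, I would use the stated equivalence — a Frobenius algebra is $(p,q)$-Koszul precisely when $A\cong\mc{D}_\varphi(\omega,q-2)$ with $\mc{W}^\bullet$ exact apart from in degrees $0$ and $d$ — to conclude that $d=q$ and that
\[0\to\Ker D_d\to A\otimes W_d\otimes A\xrightarrow{D_d}\cdots\xrightarrow{D_1}A\otimes W_0\otimes A\to A\to 0\]
is the beginning of a projective resolution of $A$ in $A^e\lmod$. Since $A$ selfinjective forces $A^e$ selfinjective, this identifies the class of $\Ker D_d$ in $A^e\psmod$ with $\Omega^{q+1}_{A^e}(A)$.

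Next I would feed this into the ``moreover'' clause of Theorem \ref{thm:self derivat alg}: as $A$ is Frobenius with $A=\bigoplus_{i=0}^p A_i$ and $A_p\neq0$, we get $\Ker D_d\cong{}_{\varphi\eta}A\langle d+p\rangle$ with $\eta$ a Nakayama automorphism of $A$, hence $\Omega^{q+1}_{A^e}(A)\cong{}_{\varphi\eta}A\langle q+p\rangle$. By the definition of the twisted stably Calabi-Yau property — which compares $\Omega^{q+1}_{A^e}(A)$ with the identity bimodule after correcting by the Nakayama functor of the stable category — this already shows that $A$ is twisted stably Calabi-Yau of dimension $q$; the remaining point is to evaluate the twist.

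The substance of the proof is therefore the identification of the twist with $\xi^{q+1}\bar{\beta}$. The grading shift $\langle q+p\rangle$ and the Koszul signs carried by the self-dual differentials $D_i$ contribute the sign automorphism $\xi^{q+1}$, one factor of $\xi$ per homological step of the length-$(q+1)$ resolution. The factor $\bar{\beta}$ is where $A^!$ enters: the self-duality $\Hom_{A^e}(A\otimes W_i\otimes A,A^e)\cong A\otimes W_{d-i}\otimes A$ is the bimodule shadow of the Frobenius pairing on $A^!$, so the Nakayama automorphism $\beta$ of $A^!$ records its asymmetry, and its induced automorphism $\bar{\beta}$ on the generators of $A$ is exactly the automorphism appearing once the Nakayama functor of $A$ has been absorbed. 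To make this precise I would invoke Theorem \ref{thm:selfinj dual}, $A_p\otimes A_q^{!*}\simeq A_0\langle p+q\rangle$, which furnishes compatible Frobenius forms on $A$ and $A^!$ and lets me compute both $\eta$ and $\bar{\beta}$ on degree-one generators, reconciling $\varphi\eta$, the Nakayama correction, and $\bar{\beta}$.

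The main obstacle is precisely this last bookkeeping: one must track three a priori independent automorphisms — the presentation twist $\varphi$, the Nakayama automorphism $\eta$ of $A$, and the induced Nakayama automorphism $\bar{\beta}$ of $A^!$ — all modulo inner automorphisms, which leave the bimodule ${}_1 A_\psi$ unchanged up to isomorphism, and with the Koszul signs handled consistently. I expect the cleanest route is to evaluate every twist on the degree-one part, where the derivation-quotient relations and the pairing of Theorem \ref{thm:selfinj dual} make all the maps explicit, and then to verify the stated identity $\varphi=\xi^{q+1}\bar{\beta}$ up to inner automorphism by direct comparison.
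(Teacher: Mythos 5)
Your reduction is sound as far as it goes, and it partially mirrors the paper: combining Theorem \ref{thm: 3.7} (so $d=q$ and $\mc{W}^\bullet$ is the start of a bimodule projective resolution), the ``moreover'' clause of Theorem \ref{thm:self derivat alg} (giving $\Omega^{q+1}_{A^e}(A)\cong{}_{\varphi\eta}A\langle p+q\rangle$), and Proposition \ref{prop: period tw cy} (which cancels the Nakayama automorphism, $\phi=\varphi\eta\eta^{-1}=\varphi$) correctly shows that $A$ is twisted stably Calabi-Yau of dimension $q$ with twist the presentation twist $\varphi$. But this reduces the statement to the identity $\varphi=\xi^{q+1}\bar{\beta}$ up to inner automorphism, and that identity is the entire substance of the corollary. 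The paper obtains it by citing Theorem \ref{thm:3.4} (Yu's Theorem 3.4), which asserts $\Omega^{q+1}_{A^e}(A)\cong{}_{\xi^{q+1}\bar{\beta}\eta}A\langle p+q\rangle$ in $A^e\lgr$, together with Proposition \ref{prop: frob period dual} to guarantee that $A^!$ is Frobenius so that $\beta$ exists. You neither cite this result nor prove it: your final two paragraphs are a plan, not an argument.

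Concretely, the mechanism you propose does not deliver the identification. Theorem \ref{thm:selfinj dual} only gives $A_p\otimes A_q^{!*}\simeq A_0\langle p+q\rangle$ as graded \emph{left} $A$-modules, i.e.\ that $A$ is of periodic type; it is a one-sided statement and carries no information about which automorphism twists the bimodule $\Omega^{q+1}_{A^e}(A)$, so it cannot ``furnish compatible Frobenius forms'' determining $\eta$ and $\bar{\beta}$ on generators. Likewise, the heuristic that the Koszul signs contribute ``one factor of $\xi$ per homological step'' is unsubstantiated: the differentials $D_i$ carry the specific signs $\epsilon_i$, and extracting $\xi^{q+1}\bar{\beta}\eta$ from the twisted self-duality of Lemma \ref{lemma:selfdual complex} is exactly the delicate computation carried out in the proof of Yu's Theorem 3.4 --- it is a theorem in its own right, not bookkeeping one can wave through. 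To repair your proof, either invoke Theorem \ref{thm:3.4} (noting via Proposition \ref{prop: frob period dual} that $A^!$ is Frobenius) and compare ${}_{\varphi\eta}A\cong{}_{\xi^{q+1}\bar{\beta}\eta}A$, whence $\varphi=\xi^{q+1}\bar{\beta}$ up to inner automorphism by uniqueness of the twist of an indecomposable twisted bimodule; or actually carry out the generator-level computation you sketch, which amounts to reproving Yu's theorem.
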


Stably Calabi-Yau properties have also appeared in the work of Amiot and Oppermann in connection with homological characterization of higher preperojective algebras (\cite{AmOp}). One of the main results in \cite{AmOp} is the following:

\begin{thmintro}[\cite{AmOp}, Theorems 4.8 and 5.9]
The map $A\mapsto\Pi_d(A)$ gives a one to one correspondence between $(d-1)$-representation finite algebras and finite dimensional selfinjective graded algebras $\Pi$ satisfying $\mathbf{R}\Hom_{\Pi^e}(\Pi,\Pi^e)[d+1]\cong\Pi\langle 1\rangle$ in $\mc{D}^b(\Pi^e\lgr)/\rgr\perf\Pi^e$.
\end{thmintro}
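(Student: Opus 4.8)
The plan is to exhibit $A\mapsto\Pi_d(A)$ and its inverse $\Pi\mapsto\Pi_0$ (passage to the degree-zero part) as mutually inverse bijections between the two classes. Here $\Pi_d(A)=T_A\bigl(\ext^{d-1}_A(DA,A)\bigr)$ is the $d$-preprojective algebra, realized as a tensor algebra graded by tensor powers, so that $(\Pi_d(A))_0=A$ and $(\Pi_d(A))_1=\ext^{d-1}_A(DA,A)$; taking the degree-zero part manifestly recovers $A$. Thus the substance of the correspondence is twofold: (i) if $A$ is $(d-1)$-representation finite then $\Pi_d(A)$ is finite dimensional, selfinjective and satisfies the stated Calabi--Yau condition; and (ii) every finite dimensional selfinjective graded $\Pi$ with $\mathbf{R}\Hom_{\Pi^e}(\Pi,\Pi^e)[d+1]\cong\Pi\langle1\rangle$ in $\mc{D}^b(\Pi^e\lgr)/\rgr\perf\Pi^e$ is isomorphic to $\Pi_d(\Pi_0)$ with $\Pi_0$ being $(d-1)$-representation finite.

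For direction (i), assume $A$ is $(d-1)$-representation finite, so $\gldim A\le d-1$ and $A$ has a $(d-1)$-cluster tilting module $M$. Finite dimensionality and selfinjectivity of $\Pi:=\Pi_d(A)$ both follow from higher Auslander--Reiten theory: the $\nu_{d-1}$-orbit of $M$ (with $\nu_{d-1}=\nu[-(d-1)]$ the $(d-1)$-Nakayama functor) has only finitely many indecomposable summands exactly because $A$ is representation finite, which bounds the internal degrees occurring in $\Pi$ and identifies $\Pi$ with the endomorphism algebra of a $(d-1)$-cluster tilting object in a Nakayama orbit category; selfinjectivity is the assertion that this orbit category carries a Serre functor. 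For the Calabi--Yau condition I would build a self-dual bimodule complex of the type $\mc{W}^\bullet$ studied in this paper: the preprojective relations assemble into a complex of graded projective $\Pi^e$-modules isomorphic to its own $\Hom_{\Pi^e}(-,\Pi^e)$-dual up to the homological shift $[d+1]$ and the internal twist $\langle1\rangle$. Passing to $\mc{D}^b(\Pi^e\lgr)/\rgr\perf\Pi^e$ annihilates the projective-bimodule part and leaves precisely $\mathbf{R}\Hom_{\Pi^e}(\Pi,\Pi^e)[d+1]\cong\Pi\langle1\rangle$.

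For direction (ii), set $A:=\Pi_0$. Reading the isomorphism $\mathbf{R}\Hom_{\Pi^e}(\Pi,\Pi^e)[d+1]\cong\Pi\langle1\rangle$ off in each internal degree forces $\Pi$ to be generated over $A$ in degrees $0$ and $1$, since the twist by $\langle1\rangle$ leaves no room for generators in higher degrees, and simultaneously identifies the bimodule $\Pi_1$ with $\ext^{d-1}_A(DA,A)$. The top degree of the resulting self-dual bimodule resolution then shows that the relations among these generators are exactly the derivation (preprojective) relations, so $\Pi\cong T_A\bigl(\ext^{d-1}_A(DA,A)\bigr)=\Pi_d(A)$. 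Feeding this tensor-algebra description, together with selfinjectivity of $\Pi$, back into the Iyama--Oppermann criterion then yields $\gldim A\le d-1$ and a $(d-1)$-cluster tilting module, so $A$ is $(d-1)$-representation finite.

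The main obstacle is matching the numerology of the singularity-category statement with higher representation theory: one must show that the single shift datum $\bigl([d+1],\langle1\rangle\bigr)$ encodes precisely $(d-1)$-representation finiteness of $\Pi_0$, rather than some weaker fractional Calabi--Yau condition. Concretely, the hard step is to control the perfect (projective-bimodule) part that is quotiented out in $\mc{D}^b(\Pi^e\lgr)/\rgr\perf\Pi^e$ and to track how the grading twist interacts with the Nakayama automorphism of $\Pi$, thereby translating the derived bimodule isomorphism into selfinjectivity of $\Pi$ together with the existence of a cluster tilting object of the correct periodicity. This is exactly where the self-duality of the complex $\mc{W}^\bullet$ established earlier does the decisive work, reducing the equivalence of the two conditions to a degreewise comparison.
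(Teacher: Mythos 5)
This statement is not proved in the paper at all: it is imported verbatim from Amiot--Oppermann (\cite{AmOp}, Theorems 4.8 and 5.9), and the only part the paper actually uses is your direction (ii), restated later as Theorem \ref{thm:Am op} and fed into the proof of Theorem \ref{thm: deriv alg preproj}. So there is no internal proof to compare yours against; what can be assessed is whether your blind argument would stand on its own. Your direction (ii) is a fair sketch of the Amiot--Oppermann route: read the graded isomorphism $\mathbf{R}\Hom_{\Pi^e}(\Pi,\Pi^e)[d+1]\cong\Pi\langle 1\rangle$ degreewise, deduce that $\Pi$ is generated over $A=\Pi_0$ in degrees $0$ and $1$ with $\Pi_1\cong\ext^{d-1}_A(A^*,A)$, conclude $\Pi\cong\Pi_d(A)$, and extract $(d-1)$-representation finiteness from finite dimensionality and selfinjectivity. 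Each step there is a theorem-level claim (in particular ``no room for generators in higher degrees'' and $\gldim A\le d-1$ require genuine truncation arguments, not just degree bookkeeping), but the plan is the right one.

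The genuine gap is in your direction (i): you propose to establish the Calabi--Yau condition by ``building a self-dual bimodule complex of the type $\mc{W}^\bullet$.'' No such complex is available for a general $(d-1)$-representation finite algebra $A$: derivation-algebra presentations of $\Pi_d(A)$ with the required exactness of $\mc{W}^\bullet$ are only known under extra hypotheses, notably Koszulity of $A$ (Grant--Iyama, Theorem \ref{thm: GI1}, and Section \ref{sec: preproj} of this paper, which moreover needs the superpotential to admit a cut). Indeed the logical flow of this paper is the \emph{reverse} of yours: one assumes a derivation-algebra presentation with (partially) exact $\mc{W}^\bullet$, proves the bimodule stably Calabi--Yau property (Theorem \ref{thm:main}), and only then invokes Amiot--Oppermann's Theorem 4.8 to recognize a higher preprojective algebra; using $\mc{W}^\bullet$ to prove the Amiot--Oppermann correspondence itself would therefore be circular, or simply unavailable, outside the Koszul case. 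The actual proof of (i) in \cite{AmOp} works instead directly with the tensor-algebra structure: the standard bimodule exact sequence $0\to\Pi\otimes_A E\otimes_A\Pi\to\Pi\otimes_A\Pi\to\Pi\to 0$ for $\Pi=T_A(E)$, combined with $E\cong\ext^{d-1}_{A^e}(A,A^e)$ and $\gldim A\le d-1$, yields the Gorenstein-parameter-one duality with no superpotential input. Relatedly, your justification of selfinjectivity of $\Pi_d(A)$ (``the orbit category carries a Serre functor'') is a slogan rather than a proof; this is a substantive theorem of Iyama--Oppermann that must be imported or reproved, and as written your argument does not supply it.
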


Note that a similar correspondence in the infinite representation type case has been shown in \cite{hio}.

In general, Gorenstein algebras satisfying the condition:
\begin{equation}\label{eq: 0}
\mathbf{R}\Hom_{\Pi^e}(\Pi,\Pi^e)[d+1]\cong\Pi\langle p\rangle 
\end{equation}
are called \emph{bimodule stably Calabi-Yau of Gorenstein parameter p} and this condition implies the classical stably Calabi-Yau property.  

For $d=2,3$ one implication of the above result holds in greater generality and this has been proved using the description of higher preprojective algebras as derivation algebras (\cite[Theorem 6.12]{kellerCY}). If $A$ is $\tau_{d-1}$-finite of global dimension $\leq d-1$, then its $d$-preprojective algebra is bimodule stably Calabi-Yau. This results motivated the introduction of a twist in the bimodule stably Calabi-Yau duality (\ref{eq: 0}) and the following Theorem, that is the second main result of this paper:

\begin{thmintro}[Theorem \ref{thm:main}]
Let $A=\mc{D}_\varphi(\omega,d-2)$ be a derivation algebra with $|\omega|=d$ and twist $\varphi$ such that the associated complex $\mc{W}^\bullet$ is exact everywhere apart from $H^0(\mc{W}^\bullet ) \cong A$ and in position $d$. Suppose moreover that $A$ is Gorenstein of dimension $<\dfrac{d}{2}$. Then $A$ is bimodule stably $\varphi$-twisted $d$-Calabi-Yau of Gorenstein parameter $d$.
\end{thmintro}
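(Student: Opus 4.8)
The plan is to compute $\mathbf{R}\Hom_{A^e}(A,A^e)$ by hand, show that modulo perfect complexes its cohomology sits in a single degree where it is a twisted high syzygy of $A$, and then identify that syzygy with ${}_\varphi A\langle d\rangle$ in the singularity category. To begin, the exactness hypothesis says that $\mc{W}^\bullet$ is exact except at its two ends, so the augmented complex
\[0\to\Omega\to A\otimes W_d\otimes A\xrightarrow{D_d}\cdots\xrightarrow{D_1}A\otimes W_0\otimes A\to A\to 0,\qquad \Omega:=\Ker D_d,\]
is exact. Thus $\mc{W}^\bullet$ is the beginning of a graded projective bimodule resolution of $A$, agreeing with a full resolution in homological degrees $\le d$, and $\Omega\cong\Omega^{d+1}(A)$ is the $(d+1)$-st syzygy of $A$.

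Next I would exploit the twisted self-duality. Since $\Hom_{A^e}(A\otimes W_i\otimes A,A^e)\cong{}_\varphi(A\otimes W_{d-i}\otimes A)$ and the dualized differentials are again the maps $D_j$ up to the twist, the complex $\Hom_{A^e}(\mc{W}^\bullet,A^e)$ is isomorphic to a reindexed twisted copy ${}_\varphi\mc{W}^\bullet$. Because $\mc{W}^\bullet$ computes $\ext^i_{A^e}(A,A^e)$ for $i\le d-1$, reading off the cohomology of this twisted copy of $\mc{W}^\bullet$ in those degrees gives $\ext^0_{A^e}(A,A^e)\cong{}_\varphi\Omega$ (as the kernel of the twisted $D_d$) and $\ext^i_{A^e}(A,A^e)=0$ for $1\le i\le d-1$ (from exactness of $\mc{W}^\bullet$ in the intermediate degrees).

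The Gorenstein hypothesis controls the top range. If $A$ is Gorenstein of dimension $g$, then $A^e=A\otimes_\Bbbk A^{op}$ satisfies $\idim_{A^e}A^e\le 2g$, whence $\ext^i_{A^e}(M,A^e)=0$ for all $i>2g$ and all finitely generated $M$; taking $M=A$ and using $2g<d$ forces $\ext^i_{A^e}(A,A^e)=0$ for every $i\ge d$. This is exactly where the bound $g<d/2$ enters, and it accounts for the appearance of $d/2$ rather than $d$. Combining the two vanishing ranges, the only surviving cohomology of $\mathbf{R}\Hom_{A^e}(A,A^e)$ lies in degree $0$, so $\mathbf{R}\Hom_{A^e}(A,A^e)\cong{}_\varphi\Omega$ in $\mc{D}^b(A^e\lgr)$. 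Passing to $\mc{D}^b(A^e\lgr)/\rgr\perf A^e$, the perfect complex $\mc{W}^\bullet$ becomes zero, so the defining exact sequence identifies the $(d+1)$-st syzygy with a shift of $A$; tracking the internal degrees carried by $W_0,\dots,W_d$ gives $\Omega\cong A\langle d\rangle[-(d+1)]$ in the singularity category, and therefore
\[\mathbf{R}\Hom_{A^e}(A,A^e)[d+1]\cong{}_\varphi\Omega[d+1]\cong{}_\varphi A\langle d\rangle,\]
which is the asserted bimodule stably $\varphi$-twisted $d$-Calabi-Yau property of Gorenstein parameter $d$.

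\textbf{The main obstacle} I expect is the grading bookkeeping in this final identification: the ungraded statement $\Omega^{d+1}(A)\cong A[-(d+1)]$ in the singularity category is formal, but verifying that the internal shift is precisely $\langle d\rangle$ requires a careful analysis of the degrees of the $W_i$ in the self-dual complex and of how the twist $\varphi$ interacts with both shifts; this is what pins the Gorenstein parameter to $d$. A secondary point requiring care is fixing the meaning of ``Gorenstein of dimension $g$'' so that the inequality $\idim_{A^e}A^e\le 2g$ is rigorously justified, since the whole top-degree vanishing rests on it.
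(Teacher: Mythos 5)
Your strategy is sound and genuinely different from the paper's. The paper never computes $\mathbf{R}\Hom_{A^e}(A,A^e)$ degree by degree: instead it first proves an equivalence (Lemma \ref{lemma:CY cond equiv}) reducing the twisted Calabi--Yau condition to an isomorphism of honest modules in $\CM\lgr(A^e)$ between the dual of the \emph{middle} syzygy, $\Hom_{A^e}\bigl(\Omega^{\frac{d+1}{2}}_{A^e}A,A^e\bigr)$ for $d$ odd, and the twisted middle syzygy --- folding the shift $[d+1]$ symmetrically so that both sides stay inside the range where $\mc{W}^\bullet$ is exact --- and then verifies this isomorphism by applying $(-)^\vee=\Hom_{A^e}(-,A^e)$ to $\Omega^{\frac{d+1}{2}}_{A^e}(A)\cong\im D_{\frac{d+1}{2}}$ and chasing kernels and cokernels through the twisted self-duality; the hypothesis $g<d/2$ enters there to guarantee that the relevant syzygies are Cohen--Macaulay without projective summands, so that the Krull--Schmidt property of $\CM\lgr(A^e)$ upgrades the stable isomorphism to a genuine one. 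You instead read off $\ext^i_{A^e}(A,A^e)=0$ for $1\le i\le d-1$ and $\ext^0_{A^e}(A,A^e)\cong$ (twisted, shifted) $\Omega^{d+1}_{A^e}(A)$ directly from the self-dual partial resolution, and use the bound $\idim_{A^e}A^e\le 2g<d$ to kill $\ext^{\ge d}$, concentrating $\mathbf{R}\Hom_{A^e}(A,A^e)$ in a single degree before passing to the singularity category. Both arguments consume the same two inputs (the twisted self-duality of $\mc{W}^\bullet$ and $g<d/2$), but yours buys a cleaner global description of $\mathbf{R}\Hom_{A^e}(A,A^e)$ at the cost of the tensor-product Gorenstein bound, which you rightly flag as needing justification: $\idim_{A^e}A^e\le\idim A+\idim A^{op}=2g$ holds for finite dimensional algebras over a perfect (here algebraically closed) field, and the paper itself invokes ``$A$ Gorenstein implies $A^e$ Gorenstein'' without proof in Proposition \ref{prop: bimod stronger}, so you are on the same footing.

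The one point where your write-up is actually off, rather than merely deferred, is the source of the internal shift. It does \emph{not} come from the syzygy identification: in the graded singularity category $\Omega^{d+1}_{A^e}(A)\cong A[-(d+1)]$ with no internal degree shift, since the graded syzygies are plain graded submodules; your guess $\Omega\cong A\langle d\rangle[-(d+1)]$ is incorrect. The shift comes from the self-duality itself, which you quoted without it: Lemma \ref{lemma:selfdual complex} (and its bigraded form, Lemma \ref{lemma: bigraded complex}) gives $\Hom_{A^e\lgr}(A\otimes W_i\otimes A,A^e)\cong A\otimes W_{d-i}\otimes A_\varphi\langle -d\rangle$, with the twist on the \emph{right} and the shift $\langle -d\rangle$ already present. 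Carrying these through your own kernel computation yields
\[ \ext^0_{A^e}(A,A^e)\cong\bigl(\Ker D_d\bigr)_\varphi\langle -d\rangle\cong\Omega^{d+1}_{A^e}(A)_\varphi\langle -d\rangle, \qquad \mathbf{R}\Hom_{A^e}(A,A^e)[d+1]\cong A_\varphi\langle -d\rangle, \]
which is precisely Gorenstein parameter $d$ in the paper's Definition \ref{defin: biodm st CY} ($A_\varphi\langle -p\rangle$); your final ${}_\varphi A\langle d\rangle$ has the twist on the wrong side and the shift with the wrong sign relative to the paper's conventions $M\langle l\rangle_i=M_{i-l}$ and ${}_\varphi A\cong A_{\varphi^{-1}}$. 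Since these are exactly the convention checks you flagged as the main obstacle, the skeleton of your argument stands once they are fixed.
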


As in the classical non-twisted setting, we prove that twisted bimodule stably Calabi-Yau algebras are in particular twisted stably Calabi-Yau, as described in Section \ref{sect:stab CY}. 

In the last part of the paper we focus on the relation between derivation algebras and higher preprojactive algebras. Recent results by Grant and Iyama (\cite{GI}) show a connection between higher preoprojective algebras of Koszul $d$-hereditary algebras and derivation algebras. The following is one of the main results of \cite{GI}:

\begin{thmintro}[\cite{GI}, Corollary 4.3]\label{thm: GI1}
Let $\Lambda=\Bbbk Q/(R)$ be a Koszul, $d$-hereditary algebra and let $\Pi$ be its $d+1$-preprojective algebra. Then 
\[\Pi \cong \dfrac{\Bbbk\overline{Q}}{\partial_p\omega} \]
where $\overline{Q}$ is a quiver obtained from $Q$ by adding some new arrows and the relations are obtained by taking formal derivatives of a superpotential $\omega$ of degree $d+1$ with respect to paths in $\overline{Q}$ of length $d-1$. If moreover $\Lambda$ is $d$-representation finite, then $\Pi$ is selfinjective and almost Koszul and simple $\Pi$-modules have periodic projective resolutions.
\end{thmintro}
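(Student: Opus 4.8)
The plan is to split the statement into two parts: the presentation $\Pi\cong\Bbbk\overline{Q}/(\partial_p\omega)$, which holds for any Koszul $d$-hereditary $\Lambda$, and the homological consequences (selfinjective, almost Koszul, periodic simples), which use $d$-representation finiteness and the structural theorems recalled above. I would begin by unwinding the definition of the higher preprojective algebra $\Pi=\Pi_{d+1}(\Lambda)=T_\Lambda\,\ext^d_\Lambda(D\Lambda,\Lambda)$ and giving it the total grading in which a generator coming from $\Lambda_1$ and a generator coming from the tensor factor $\ext^d_\Lambda(D\Lambda,\Lambda)$ both have degree one. Koszulity of $\Lambda$ provides the bimodule Koszul resolution of $\Lambda$ whose top term is $\Lambda\otimes(\Lambda^!_d)^*\otimes\Lambda$; reading it off gives $\ext^d_\Lambda(\Lambda_0,\Lambda_0)\cong\Lambda^!_d$ and identifies $\ext^d_\Lambda(D\Lambda,\Lambda)$ with $(\Lambda^!_d)^*$ as a $\Lambda_0$-bimodule. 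The arrows of $\overline{Q}$ are then those of $Q$ together with one new arrow for each element of a basis of $(\Lambda^!_d)^*$; since $\Lambda^!_d$ sits in Koszul degree $d$, a cycle built from $d$ old arrows and one new arrow has total degree $d+1$, which is where a superpotential must live.

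The heart of the argument is to construct $\omega$ and to match $(\partial_p\omega)$ with the defining relations of $\Pi$. I would produce $\omega$ as the canonical cyclic element $\sum_\alpha p_\alpha\,a_\alpha$ of degree $d+1$, where $\{p_\alpha\}$ runs over a basis of $\Lambda^!_d$ regarded as length-$d$ paths of old arrows and $\{a_\alpha\}$ is the dual basis of new arrows. This $\omega$ is precisely the image of the coevaluation element under the self-duality $\Hom_{A^e}(A\otimes W_i\otimes A,A^e)\cong A\otimes W_{d-i}\otimes A$ of the complex $\mc{W}^\bullet$ recalled in the Introduction, so that $\mc W^\bullet$ for $\Pi$ is exactly the self-dual complex attached to this superpotential (twisted by the Nakayama automorphism if $\Pi$ is only selfinjective rather than symmetric).

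Differentiating $\omega$ with respect to paths of length $d-1$ then falls into two cases. If $p$ consists of $d-1$ old arrows, then $\partial_p\omega$ removes a length-$(d-1)$ chunk of some $p_\alpha$ and leaves a quadratic term of mixed type (one old arrow and the new arrow $a_\alpha$), and these reproduce the tensor-algebra relations of $T_\Lambda\,\ext^d_\Lambda(D\Lambda,\Lambda)$. If instead $p$ contains the new arrow together with $d-2$ old arrows, then $\partial_p\omega$ leaves a quadratic term in two old arrows, and one must check that these are exactly the Koszul relations $R$ of $\Lambda$. This last verification is the main obstacle: it is the assertion that the orthogonality $R=(\Lambda^!_2)^{\perp}$ built into Koszul duality is compatible with the pairing defining $\omega$, and establishing this compatibility — equivalently, that the length-$d$ paths $p_\alpha$ spanning $\Lambda^!_d$ interact with the derivative so as to cut out $R$ and nothing more — is the genuinely technical step.

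For the remaining assertions, assume $\Lambda$ is $d$-representation finite, so that $\Pi$ is finite dimensional. Representation finiteness forces the associated complex $\mc W^\bullet$ of the derivation quotient algebra $\Pi$ to be exact in the low degrees required by Theorem \ref{thm:self derivat alg}, and that theorem then yields selfinjectivity of $\Pi$. Thus $\Pi$ is a selfinjective derivation quotient algebra whose complex $\mc W^\bullet$ gives the beginning of a bimodule resolution, so Theorem \ref{thm 1} together with its almost-Koszul reformulation shows that $\Pi$ is almost Koszul, i.e. $(p,q)$-Koszul for parameters determined by the graded structure of $\Pi$. Finally, periodicity of the projective resolutions of the simple $\Pi$-modules is exactly the conclusion of Theorem \ref{thm:selfinj dual}, which asserts that a selfinjective $(p,q)$-Koszul algebra is of periodic type; applying it to $\Pi$ completes the proof.
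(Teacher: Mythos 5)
The paper does not actually prove this statement: it is imported verbatim from Grant--Iyama \cite[Corollary 4.3]{GI}, so your proposal has to be measured against their proof. Your reconstruction gets the combinatorial skeleton right (new arrows dual to a basis of $K^d\cong(\Lambda^!_d)^*$, the coevaluation-type element $\omega=\sum_\alpha p_\alpha a_\alpha$ of total degree $d+1$, and the two families of derivatives producing respectively the mixed relations of $T_\Lambda E$ and the quadratic relations $R$), but it has two genuine gaps. First, you never use the $d$-hereditary hypothesis, and as written your argument would ``prove'' the presentation for an arbitrary Koszul algebra of finite global dimension, which is false. The step you flag as the main obstacle --- that the derivatives containing a new arrow cut out exactly $R$ --- is in fact the easy half: it is linear-algebra orthogonality in $V^{\otimes 2}$. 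The hard half, which your sketch omits entirely, is that the derivatives generate \emph{all} relations, i.e.\ that the evident surjection $\Bbbk\overline{Q}/(\partial_p\omega)\twoheadrightarrow\Pi$ is injective; this is precisely where $d$-hereditariness enters (through the vanishing of $\ext^i_{\Lambda^e}(\Lambda,\Lambda^e)$ for $0<i<d$, which makes the dualized Koszul bimodule complex exact in middle degrees). Relatedly, your justification that $\omega$ is a superpotential via the self-duality of $\mc{W}^\bullet$ is circular, since $\mc{W}^\bullet$ is only defined once $\omega$ is in hand; in \cite{GI} cyclicity comes instead from quadratic duality: $\Pi$ is quadratic with $\Pi^!$ a graded (twisted) trivial extension of $\Lambda^!$, and trivial extensions are symmetric Frobenius algebras, which is what manufactures the superpotential.

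The second gap is in the representation-finite half, where your logic runs backwards. You assert that $d$-representation finiteness ``forces'' $\mc{W}^\bullet$ to be exact in the degrees required by Theorem \ref{thm:self derivat alg}, with no mechanism; but in the framework of this paper, exactness of $\mc{W}^\bullet$ away from degrees $0$ and $d$ is \emph{equivalent} to $\Pi$ being Frobenius almost Koszul (Theorem \ref{thm: 3.7}, due to Yu), so invoking that exactness to deduce selfinjectivity and then almost-Koszulity assumes exactly what must be proved. In the actual development, selfinjectivity of $\Pi$ for $d$-representation finite $\Lambda$ is a theorem of Iyama and Oppermann established by entirely different (cluster-tilting) methods, and Grant--Iyama then obtain the almost Koszul property from the Koszul-dual trivial-extension description together with arguments in the style of Brenner--Butler--King --- not from a prior exactness statement for $\mc{W}^\bullet$. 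Your final inference is sound once those two facts are in place: periodicity of the projective resolutions of the simple $\Pi$-modules does follow from Theorem \ref{thm:selfinj dual}, since a selfinjective $(p,q)$-Koszul algebra is of periodic type and $\Omega^{q+1}_\Pi$ then permutes the finitely many simples up to degree shift.
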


Analogously to the $2$-representation finite case (\cite{HerIy}), we introduce another grading on a derivation algebra $A$ such that the superpotential is homogeneous of degree one with respect to this new grading. This generalizes the existence of a \emph{cut} for quivers with potential. Under the usual exactness conditions on the associated coomplex $\mc{W}^\bullet$, we show that the degree-zero subalgebra of $A$ is Koszul and $(d-1)$-representation finite. Moreover we show that the derivation algebra $A$ is isomorphic to the higher preprojective algebra of its degree-zero part, giving the following result:

\begin{thmintro}[Theorem \ref{thm: deriv alg preproj}]
Let $A=\mc{D}(\omega,d-2)$ be a finite dimensional $\mb{Z}^2$-graded derivation algebra such that $|\omega|=(d,1)$ and such that $\mc{W}^\bullet$ is exact except in degree zero, where $H^0(\mc{W}^\bullet)\cong A$, and in degree $d$. Let $\Lambda:=(A)_{-,0}=\bigoplus_{i\geq 0}A_{i,0}$ be the $V$-degree zero subalgebra of $A$. Then $\Lambda$ is Koszul, $(d-1)$-representation finite and we have an isomorphism $\Pi_d(\Lambda)\cong A$ of graded algebras.
\end{thmintro}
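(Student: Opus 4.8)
The plan is to reduce the statement to the homological characterization of higher preprojective algebras due to Amiot and Oppermann \cite{AmOp}: once we know that $A$, equipped with the $V$-grading, is a finite dimensional selfinjective graded algebra satisfying $\mathbf{R}\Hom_{A^e}(A,A^e)[d+1]\cong A\langle 1\rangle$ in $\mc{D}^b(A^e\lgr)/\rgr\perf A^e$, their result produces a unique $(d-1)$-representation finite algebra $B$ with $A\cong\Pi_d(B)$ as graded algebras. Since in the tensor-algebra grading of $\Pi_d(B)$ the subalgebra $B$ is exactly the degree-zero part, the identification $B=\Lambda=(A)_{-,0}$ is immediate, yielding both the $(d-1)$-representation finiteness of $\Lambda$ and the graded isomorphism $A\cong\Pi_d(\Lambda)$. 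Koszulity of $\Lambda$ is the one property not furnished by \cite{AmOp}, and I would establish it separately at the end.

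So the first task is to verify the hypotheses of \cite{AmOp}. Selfinjectivity comes for free from Theorem \ref{thm:self derivat alg}: the assumption that $\mc{W}^\bullet$ is exact everywhere except in degrees $0$ and $d$ gives in particular $H^0(\mc{W}^\bullet)\cong A$, exactness in degree one, and exactness at $A\otimes W_{d-1}\otimes A$, which is precisely the criterion for $A$ to be selfinjective. In particular $A$ is Gorenstein of dimension $0<\tfrac{d}{2}$, so Theorem \ref{thm:main} applies (with trivial twist $\varphi=\Id$) and shows that $A$ is bimodule stably $d$-Calabi-Yau with Gorenstein parameter $d$ for the total path-length grading.

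The main obstacle is to upgrade this Calabi-Yau duality to the finely $V$-graded statement $\mathbf{R}\Hom_{A^e}(A,A^e)[d+1]\cong A\langle 1\rangle$ required by \cite{AmOp}, where $\langle 1\rangle$ now denotes the shift in the $V$-grading. Here the hypothesis $|\omega|=(d,1)$ is decisive. The relations $\partial_p\omega$, and hence all terms of $\mc{W}^\bullet$, are bihomogeneous, so the self-duality $\Hom_{A^e}(A\otimes W_i\otimes A,A^e)\cong A\otimes W_{d-i}\otimes A$ underlying Theorem \ref{thm:main} can be tracked in both gradings simultaneously: because the generator $\omega$ of $W_d$ has bidegree $(d,1)$, the contraction $W_i\otimes W_{d-i}\to W_d$ implicit in the duality lands in $V$-degree $1$, so the Gorenstein parameter $d$ appearing in the total grading refines to the single shift $\langle 1\rangle$ in the $V$-grading. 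Carrying out this bookkeeping of the two gradings through the self-dual complex is, I expect, the technical heart of the argument.

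With the hypotheses of \cite{AmOp} in hand we obtain that $\Lambda=(A)_{-,0}$ is $(d-1)$-representation finite and that $A\cong\Pi_d(\Lambda)$ as graded algebras. It remains to prove that $\Lambda$ is Koszul, and for this I would restrict $\mc{W}^\bullet$ to its $V$-degree-zero component. As $A$ is quadratic in the path-length grading—its relations being the length-two elements $\partial_p\omega$—the complex $\mc{W}^\bullet$ is a subcomplex of the bimodule Koszul complex of $A$ by \cite{BSW}; selecting $V$-degree zero keeps only the arrows of $\Lambda$, so the $V$-degree-zero part of $A\otimes W_i\otimes A$ is canonically $\Lambda\otimes K_i^\Lambda\otimes\Lambda$, the $i$-th term of the bimodule Koszul complex of $\Lambda$. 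Since $\gldim\Lambda\le d-1$, only the terms in degrees $0,\dots,d-1$ survive, and these lie in the range where $\mc{W}^\bullet$ is exact, with $H^0\cong A$ contributing $\Lambda$ in $V$-degree zero. Hence the bimodule Koszul complex of $\Lambda$ is a resolution of $\Lambda$, that is, $\Lambda$ is Koszul, which completes the proof.
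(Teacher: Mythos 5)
Your proposal follows the paper's own route almost step for step: selfinjectivity from Theorem \ref{thm:self derivat alg}, the bimodule stably Calabi--Yau property from Theorem \ref{thm:main}, passage to the $V$-grading with Gorenstein parameter one, and then Amiot--Oppermann (Theorem \ref{thm:Am op}) to get $(d-1)$-representation finiteness of $\Lambda=(A)_{-,0}$ and $\Pi_d(\Lambda)\cong A$. The ``technical heart'' you anticipate---tracking both gradings through the self-dual complex so that $|\omega|=(d,1)$ refines the duality shift to $\langle -d,-1\rangle$---is exactly what the paper has already packaged in Lemma \ref{lemma: bigraded complex} and the bigraded statement of Theorem \ref{thm:main} (Gorenstein parameter $(d,1)$), after which one simply forgets the tensor grading, as the paper's proof does.

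The one place where your written argument does not hold as stated is the Koszulity step. You assert that the $V$-degree-zero part of $A\otimes W_i\otimes A$ is canonically $\Lambda\otimes K^i_\Lambda\otimes\Lambda$, the $i$-th term of the bimodule Koszul complex of $\Lambda$. But Lemma \ref{lemma: Koszul subcomplex} only gives the inclusion $W_i\subseteq (A^!_i)^*$, with equality when $A$ is Koszul---and $A$ here is finite dimensional selfinjective, hence not Koszul, so a priori the degree-zero part of $W_i$ is only contained in $K^i_\Lambda$. Moreover your appeal to $\gldim\Lambda\le d-1$ to kill the top term is circular: that bound is itself a consequence of the vanishing of the $V$-degree-zero part of $A\otimes W_d$, which holds for the elementary reason that $W_d=\langle\omega\rangle$ is concentrated in $V$-degree $1$. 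Both issues are repaired exactly as in the paper's Proposition \ref{prop: kosz deg zero alg}: one does not need to identify the restricted complex with the Koszul complex of $\Lambda$ at all, only to observe that the $V$-degree-zero parts $(A\otimes W_i)_0$ are graded projective $\Lambda$-modules generated in tensor degree $i$, that $(A\otimes W_d)_0=0$, and that the resulting complex is an exact linear resolution of ${}_\Lambda S$---which is precisely the definition of Koszulity (and yields $\gldim\Lambda\le d-1$ as a byproduct, feeding into the Amiot--Oppermann step). With that substitution your proof is complete and coincides with the paper's.
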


The structure of the paper is the following. 

In Section \ref{sec: prelim} we recall graded structures on algebras and the construction of derivation algebras from \cite{BSW}. In particular we focus on the definition and properties of the complex $\mc{W}^\bullet$ that plays a central role in our work. Then we recall classical results about selfinjective and Frobenius (graded) algebras, explaining the connection between such properties and partial exactness of $\mc{W}^\bullet$ in the case of derivation algebras. In the last part of Section 1 we recall the definition of Koszul and almost Koszul algebras, with particular attention on Frobenius almost Koszul algebras.

In Section \ref{sect:stab CY} we give the definition of twisted stably Calabi-Yau (selfinjective) algebras and establish their connection with almost Koszul algebras. This results give the analogous of \cite[Theorem 6.8]{BSGkoszul} in the Frobenius case.

In Section \ref{sec :bimod} we introduce a $\mb{Z}^2$-graded structure on derivation algebras and, after recalling some facts about Cohen-Macaulay modules, we give the definition of twisted bimodule stably Calabi-Yau algebras. Then we prove that Gorenstein derivation algebras such that the associated complex is exact, apart from in the first and last degree, are twisted bimodule stably Calabi-Yau.

In Section \ref{sec: preproj} we recall the definition and cassical results about higher preprojective algebras and their bigraded structure, then we prove that derivation algebras satisfying the usual partial exactness property of the associated bimodule complex are isomorphic to higher preprojective algebras of Koszul, higher representation finite algebras. 

\textbf{Acknowledgements} The author would like to thank Martin Herschend for the guidance and help provided during the preparation of this article. Special thanks go to Vanessa Miemietz and Joseph Grant for their support and feedback.  He also acknowledges the financial support from the Department of Mathematics of Uppsala University for the academic years 2018-2019 and 2019-2020, during which this article has been written.

\section{Preliminary results}\label{sec: prelim}

\subsection{Graded algebras}
Throughout the paper, we fix an algebraically closed field $\Bbbk$ . For an algebra $A$ we denote by $\rmod A$ (resp. $A\lmod$) the category of finite dimensional right (resp. left) modules over $A$. We denote by $A^e$ the enveloping algebra $A\otimes_\Bbbk A^{op}$ so that $A$-$A$-bimodules (with central $\Bbbk$-action) can be viewed as left $A^e$-modules. For a left $A$-module $M$ and an algebra automorphism $\alpha:A\to A$, $_\alpha M$ denotes the left module with action twisted by $\alpha$, that is $a\cdot m=\alpha(a)m$ for any $a\in A$ and $m\in M$. Similarly, for a right $A$-module $N$, we denote by $N_\alpha$ the module $N$ with a right twisted action $n\cdot a=n\alpha(a)$. Note that with this notation we have ${}_\alpha A\otimes_A M={}_\alpha M$, $N\otimes_A{}_\alpha A=N_{\alpha^{-1}}$ and the isomorphism $M_\alpha\cong{}_{\alpha^{-1}}M$.

A \emph{positively graded} algebra is a $\mb{Z}$-graded algebra $A=\bigoplus_{n\geq 0}A_n$ such that $A_0$ is semisimple and $A_iA_j\subseteq A_{i+j}$. Unless stated otherwise, we will always assume that each homogeneous space $A_n$ is finite dimensional over $\Bbbk$. If $a\in A$ is an homogeneous element we denote its degree by $|a|$. A \emph{graded} $A$-module $M=\bigoplus_{n\in\mb{Z}}M_n$ is an $A$-module such that $A_iM_j\subseteq M_{i+j}$ for any $i,j\in\mb{Z}$. We denote by $M\langle l\rangle$ the degree shift of $M$ by $l$, that is $M\langle l\rangle_i=M_{i-l}$. A morphism of $A$-modules $f:M\to N$ is \emph{in degree} $j$ if $f(M_i)\subseteq N_{i+j}$. We write $\hom_A(M,N)^i$ for the space of graded morphisms in dgeree $i$ and we denote by $A\lgr$, resp. $\rgr A$, the categories of graded finite dimensional left, resp. right, modules over $A$ with morphisms in degree zero. Since we will consider only finite dimensional modules, the space $\Hom_A(M,N)=\bigoplus_{i\in\mb{Z}}\hom_A(M,N)^i$ coincides with the space of all $A$-module morphisms between $M, N\in A\lmod$.

\subsection{Derivation algebras} \label{section: deriv alg}
The definitions and constructions of this section are taken from Section 2 of \cite{BSW}.

Let $S$ be a semisimple algebra over $\Bbbk$ and $V$ be a finite dimensional $S$-$S$-bimodule. We denote by $T_S(V)$ the tensor algebra of $V$ over $S$. Any tensor algebra has a non-negative graded structure obtained by setting $S$ in degree zero and $V$ in degree one. There are several ways of obtaining $S$-$S$-bimodules dual to $V$:
\begin{enumerate} \label{enum: duals}
\item $V^*:=\Hom_\Bbbk(V,\Bbbk)$ with bimodule action $(s\cdot f\cdot t)(v)=f(tvs)$;
\item $V^{*r}:=\Hom_S(V_S,S_S)$ with bimodule action $(s\cdot f\cdot t)(v)=sf(tv)$;
\item $V^{*l}:=\Hom_S({}_SV,{}_SS)$ with bimodule action $(s\cdot f\cdot t)(v)=f(vs)t$. 
\item $V^{*b}:=\Hom_{S^e}({}_{S^e}V,S^e)$ where we denote $f\in V^{*b}$ by $f_1\otimes_\Bbbk f_2$, with bimodule action $(s\cdot f\cdot t)_1(v)\otimes_\Bbbk(s\cdot f\cdot t)_2(v)=f_1(v)t\otimes_\Bbbk sf_2(v)$.
\end{enumerate}

As explained in \cite[Section 2.1]{BSW}, the choice of a non-degenerate trace function $\Tr:S\to \Bbbk$ allows us to define natural isomorphisms between the three functors above. From now on we fix such a trace function and we will identify these three dualities, denoting the dual of $V$ by $V^*$. For any $f\in V^*$ and $v\in V$ there is a canonical pairing defined by evaluation:
\[ [fv]=[vf]=f(v). \]

Denote by $\otimes $ the tensor product over the semisimple algebra $S$. The canonical pairing can be extended to a pairing on the tensor products $(V^*)^{\otimes k}\otimes_\Bbbk V^{\otimes l}\to V^{\otimes l-k}$ (for $l\geq k$) by
\[ [(f_1\otimes\cdots\otimes f_k) (v_1\otimes\cdots\otimes v_l)]=[f_1[\cdots[f_kv_1]\cdots] v_{k}]v_{k+1}\otimes\cdots \otimes v_1 \] 
and similarly for $V^{\otimes l}\otimes_\Bbbk(V^*)^{\otimes k} \to V^{\otimes l-k}$. If $l<k$ we can replace $V^{\otimes l-k}$ with $V^{*\otimes k-l}$. Note that these brackets satisfy associativity relations, that is:
\[ [(f\otimes g)v]=[f[gv]] \quad \text{and} \quad [[fv]g]=[f[vg]] \]
for any $f\in V^{*\otimes k}$, $g\in V^{*\otimes l}$ and $v\in V^{\otimes n}$, $n\geq k+l$.

\begin{defin}\label{defin:superpotential}
A \emph{potential} of degree $d$ is an homogeneous element of degree $d$ of the tensor algebra $T_S(V)$ that commutes with the $S$ action:
\[ \omega\in V^{\otimes d} \quad \text{such that} \quad \forall s\in S: s\omega=\omega s. \]

A \emph{superpotential} is a potential such that
\[ [f\omega]=(-1)^{d-1}[\omega f], \]
for any $f\in V^*$.

Let $\varphi$ be a graded $\Bbbk$-algebra automorphism of $T_S(V)$ leaving the trace function invariant. A \emph{twisted potential} of degree $d$ is a potential $\omega$ such that
\[  \forall s\in S: \varphi(s)\omega=\omega s.\]

A twisted  potential $\omega$ is called a \emph{twisted superpotential} if
\[ [\varphi^*(f)\omega]=(-1)^{d-1}[\omega f], \]
for any $f\in V^*$, where $\varphi^*$ is the dual automorphism of $\varphi$: $\varphi^*(f)=f\circ\varphi$.
\end{defin}

Given a twisted potential $\omega$ of degree $d$, for any $k\leq d$ we define maps:
\[\Delta^\omega_k:(V^{\otimes k})^*\otimes S_\varphi\to V^{\otimes d-k}\] 
such that $\Delta_k^\omega(f\otimes s)=[f\omega s]=[f\varphi(s)\omega]$ and we denote by $W_{d-k}\subseteq V^{\otimes d-k}$ the image of $\Delta_k^\omega$. 

\begin{lemma}\label{lemma: W twisted}
Let $\omega$ be a twisted potential of degree $d$ with twist $\varphi$. For any $k\leq d$ the right $S$-action on $W_{d-k}$ is twisted by $\varphi$. 
\end{lemma}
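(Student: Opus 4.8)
The plan is to make the right $S$-module structure of $W_{d-k}$ explicit by presenting it as the image of a single map and then tracking how the right action interacts with the defining relation of a twisted potential. For fixed $\omega$ I would consider the $\Bbbk$-linear map
\[ \delta_k^\omega : (V^{\otimes k})^* \to V^{\otimes d-k}, \qquad \delta_k^\omega(f) = [f\omega]. \]
First I would observe that $W_{d-k} = \im \Delta_k^\omega = \im \delta_k^\omega$: indeed $\Delta_k^\omega(f\otimes t) = [f\omega t] = [f\varphi(t)\omega] = [(f\cdot\varphi(t))\omega] = \delta_k^\omega(f\cdot\varphi(t))$, using the twisted potential identity $\omega t = \varphi(t)\omega$ and the compatibility of the canonical pairing with the $S$-bimodule structure of $(V^{\otimes k})^*$; since $t=1$ already recovers every $f$, the two images coincide. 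This reduces everything to understanding the one-sided actions through $\delta_k^\omega$.

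Next I would compute the two one-sided actions on an element $\delta_k^\omega(f) = [f\omega]$ of $W_{d-k}$, regarded inside $V^{\otimes d-k}$ with its standard bimodule structure. For the left action, the pairing contracts the first $k$ tensor factors while leaving the leading factor of the tail untouched, so that $s\cdot[f\omega] = [(s\cdot f)\omega]$; hence the left action is untwisted and $\delta_k^\omega$ is left $S$-linear. For the right action, the right $S$-action commutes with the contraction of the first $k$ factors, so
\[ [f\omega]\cdot s = [f(\omega s)]. \]
Applying the defining identity $\omega s = \varphi(s)\omega$ and then transferring $\varphi(s)$ across the pairing onto $f$ through the bimodule structure of $(V^{\otimes k})^*$ gives
\[ [f\omega]\cdot s = [f(\varphi(s)\omega)] = [(f\cdot\varphi(s))\omega] = \delta_k^\omega(f\cdot\varphi(s)). \]
This exhibits the right $S$-action on $W_{d-k}$ as the image under $\delta_k^\omega$ of the $\varphi$-twisted right action on $(V^{\otimes k})^*$, which is exactly the assertion that the right action on $W_{d-k}$ is twisted by $\varphi$.

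The main difficulty here is not the algebra, which is a short computation, but the bookkeeping: one must use the fixed identifications of the several duals $V^*, V^{*r}, V^{*l}, V^{*b}$ via the trace form and verify that the two compatibilities invoked above — pulling the right action into $\omega$, and moving the left multiplication by $\varphi(s)$ onto $f$ — are precisely the right and left $S$-linearity of the canonical pairing recorded after its definition. A secondary point is to state cleanly what it means for the right action to be twisted: the relation $\delta_k^\omega(f)\cdot s = \delta_k^\omega(f\cdot\varphi(s))$ says that $\delta_k^\omega$ preserves the left action but turns the ordinary right action on the source $(V^{\otimes k})^*$ into the right action on $W_{d-k}$ only after precomposition with $\varphi$, so that $W_{d-k}$, viewed as a quotient of the bimodule $(V^{\otimes k})^*$, carries a right $S$-action twisted by $\varphi$.
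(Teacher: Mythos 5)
Your proof is correct and takes essentially the same route as the paper's: both establish the claim by computing the right $S$-action on elements $[f\omega t]$ of $W_{d-k}$ directly, pushing $s$ through the bracket and converting $\omega s$ into $\varphi(s)\omega$ via the defining identity of a twisted potential. Your reduction of $\Delta_k^\omega$ to the single map $\delta_k^\omega$ and the explicit tracking of the pairing's bimodule compatibilities are a slightly more detailed rendering of the paper's one-line computation, not a different argument.
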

\begin{proof}
Using the properties of twisted potentials and the definition of $W_{d-k}$ we can compute the right $S$-action:
\[ [f\omega t]\cdot s=\Delta^\omega_k(f\otimes t)\cdot s=\Delta^\omega_k(f\otimes t\cdot s)=\Delta^\omega_k(f\otimes t\varphi(s))=[f\omega t\varphi(s)], \]
for any $f\in (V^{\otimes k})^*$ and any $t,s\in S$.
\end{proof}

\begin{defin}\label{defin: deriv alg}
Let $\omega$ be a $\varphi$-twisted superpotential of degree $d$. The \emph{derivation quotient algebra} (or simply \emph{derivation algebra}) of $\omega$ is defined as follows:
\[ \mc{D}_\varphi(\omega,k):=T_S(V)/\langle W_{d-k}\rangle \]
for any $k\leq d$.
\end{defin} 

\begin{rmk}
Given a finite quiver $(Q_0,Q_1)$ with set of vertices $Q_0$ and set of arrows $Q_1$ we define the \emph{source} and \emph{target} functions $s,t:Q_1\to Q_0$ such that if $i\xrightarrow{a}j$ is any arrow in $Q_1$ we have $s(a)=i$ and $t(a)=j$. We write the composition of arrows from right to left so that $i\xrightarrow{b}j\xrightarrow{a}k=ab$. The set of trivial paths $\{e_i : i\in Q_0 \}$ is a complete set of primitive idempotents of the pah algebra $\Bbbk Q$ that is naturally a graded algebra with grading given by path length. $\Bbbk Q$ is moreover a tensor algebra of the vector space $V=\Bbbk Q_1$, generated by the arrows, over $S=\Bbbk Q_0$. Here the tensor grading coincides with the radical grading. Let $\{ a\in Q_1\}$ be a basis for $\Bbbk Q_1$ and $\{ a^* :a\in Q_1\}$ be the dual basis of $(\Bbbk Q_1)^*$. Then the brackets have the following form:
\[ [a^*b]=\delta_{ab}e_{s(b)} \quad \text{and} \quad [ba^*]=\delta_{ab}e_{t(b)}.  \]

In general, if $p$ and $q$ are paths in $Q$, the bracket $[p^*q]$ correspond to the formal partial derivative:
\[ \partial_p(q)=[p^*q]. \]

A \emph{potential} of degree $d$ is an element $\omega\in\Bbbk Q_d$ that commutes with the $\Bbbk Q_0$ action, i.e. $\omega$ is a linear combination of closed paths. $\omega$ is a \emph{superpotential} if $[a^*\omega]=(-1)^{d-1}[\omega a^*]$. 

Let $\varphi$ be an automorphism of the tensor algebra $\Bbbk Q$. A \emph{twisted potential} (of degree $d$) is a linear combination $\omega$ of paths $p\in\Bbbk Q_d$ satisfying $t(p)=\varphi(s(p))$. $\omega$ is a \emph{twisted superpotential} if $[\varphi(a)^*\omega]=(-1)^{d-1}[\omega a^*]$. 
\end{rmk}

Let $A=\mc{D}_\varphi(\omega,k)$ be a derivation algebra with a $\varphi$-twisted superpotential $\omega$ of degree $d=k+2$, so that $A$ is quadratic. Note that from the $S$-bimodules $W_i$ we can obtain graded $A$-bimodules $ A\otimes W_{i}\otimes A$ by tensoring with $A$ over $S$. 
We can associate to $A$ the following complex $\mc{W}^\bullet$ of graded projective $A$-bimodules:
\begin{equation} \label{eq:W}
\mc{W}^\bullet:= \quad 0\to A\otimes W_{d}\otimes A \xrightarrow{D_{d}} A\otimes W_{d-1}\otimes A \xrightarrow{D_{d-1}}\cdots \xrightarrow{D_1} A\otimes W_0 \otimes A\to 0,  
\end{equation}
with degree-zero differentials
\[ D_i(a\otimes w_1\ldots w_i\otimes b)=\epsilon_i(aw_1\otimes w_2\ldots w_i\otimes b +(-1)^{i}a\otimes w_1\ldots w_{i-1}\otimes w_ib), \]
where 
\[ \epsilon_i:=\begin{cases} (-1)^{i(d-i)}, & \text{if } i<(d +1)/2 \\ 1, & \text{otherwise} \end{cases} \]
for each $a,b\in A$, $w_1,\ldots,w_i\in W_i$ and $1\leq i\leq d$.

\begin{lemma}[\cite{BSW}, Lemma 6.4]\label{lemma:selfdual complex}
The complex $\mc{W}^\bullet$ is a complex of graded projective $A$-bimodules. Moreover $\mc{W}^\bullet$ is twisted-selfdual, i.e. there are isomorphisms $ (W_{d-i})_\varphi\cong (W_i)^*$ inducing a duality of $A$-bimodules $\Hom_{A^e\lgr}(A\otimes W_{i}\otimes A,A^e) \cong A\otimes W_{d-i}\otimes A_\varphi\langle-d\rangle$ such that $\Hom_{A^e\lgr}(D_i,A^e)= D_{d+1-i}\langle-d\rangle$. 
\end{lemma}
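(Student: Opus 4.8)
The plan is to establish the three assertions in turn: projectivity of the terms (together with the fact that $\mc{W}^\bullet$ is a complex), the $S$-bimodule isomorphism $(W_{d-i})_\varphi\cong(W_i)^*$, and finally the compatibility of the differentials with the induced bimodule duality.

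First, for projectivity I would use that $\Bbbk$ is algebraically closed and $S$ semisimple, hence separable, so that $S^e=S\otimes_\Bbbk S^{op}$ is again semisimple. Thus every finite-dimensional $S$-bimodule, in particular each $W_i$, is a projective $S^e$-module, i.e. a direct summand of a finite sum of copies of $S^e$. Applying the exact functor $A\otimes_S-\otimes_S A=A^e\otimes_{S^e}-$ shows that $A\otimes W_i\otimes A$ is a direct summand of a finite sum of copies of $A^e\otimes_{S^e}S^e\cong A^e$, hence projective as an $A^e$-module; it is graded with $W_i$ concentrated in internal degree $i$. That $\mc{W}^\bullet$ is a complex, i.e. $D_{i-1}D_i=0$, and that the $D_i$ are degree-zero bimodule maps, is a direct check: each $D_i$ is built from left and right multiplication, so it is a graded $A$-bimodule map, and the vanishing $D_{i-1}D_i=0$ follows by expanding the four resulting terms and invoking the chosen signs $\epsilon_i$.

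The heart of the argument is the isomorphism $(W_{d-i})_\varphi\cong(W_i)^*$, which I would construct from the canonical contraction pairing and the twisted superpotential identity. Recall that $W_i=\im\Delta^\omega_{d-i}=\{[f\omega]:f\in(V^{\otimes d-i})^*\}$ and $W_{d-i}=\im\Delta^\omega_i=\{[g\omega]:g\in(V^{\otimes i})^*\}$. Define a pairing $\bar\Theta\colon W_{d-i}\times W_i\to S$ by $\bar\Theta([g\omega],[f\omega])=[(g\otimes f)\omega]=[g[f\omega]]$, and compose with the fixed trace to get a $\Bbbk$-bilinear form. Well-definedness in the second variable is immediate from associativity of the brackets, since the value depends only on $w=[f\omega]$. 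Well-definedness in the first variable is exactly where the superpotential symmetry enters: iterating the relation $[\varphi^*(h)\omega]=(-1)^{d-1}[\omega h]$ across the $d-i$ tensor factors of $f$ rewrites $[g[f\omega]]$ as $(-1)^{i(d-i)}$ times a contraction of $f$ against a $\varphi$-twist of $[g\omega]$, so that the value depends only on $[g\omega]\in W_{d-i}$. This rewriting is precisely what forces the twist $(-)_\varphi$. Nondegeneracy then follows because $W_i$ and $W_{d-i}$ are, by construction, the images of the two contraction maps $f\mapsto[f\omega]$ and $g\mapsto[g\omega]$, which are transposes of one another under the canonical pairings; hence $\dim_\Bbbk W_i=\dim_\Bbbk W_{d-i}$ and $\bar\Theta$ induces an isomorphism $(W_{d-i})_\varphi\xrightarrow{\sim}(W_i)^*$ of $S$-bimodules, the left and right $S$-actions matching by Lemma~\ref{lemma: W twisted}.

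With this in hand I would derive the bimodule duality from the standard adjunction for the separable subalgebra $S$: writing $A\otimes W_i\otimes A\cong A^e\otimes_{S^e}W_i$ and using that $A^e$ is projective over $S^e$, one gets a natural isomorphism $\Hom_{A^e\lgr}(A^e\otimes_{S^e}W_i,A^e)\cong A\otimes_S\Hom_{S^e}(W_i,S^e)\otimes_S A=A\otimes(W_i)^*\otimes A$, and substituting $(W_i)^*\cong(W_{d-i})_\varphi$ together with the internal-degree bookkeeping (generators of $W_i$ sit in degree $i$, their duals in degree $-i=(d-i)-d$) yields $\Hom_{A^e\lgr}(A\otimes W_i\otimes A,A^e)\cong A\otimes W_{d-i}\otimes A_\varphi\langle-d\rangle$. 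The final identity $\Hom_{A^e\lgr}(D_i,A^e)=D_{d+1-i}\langle-d\rangle$ I would verify by transporting $D_i$ through these isomorphisms: the duality interchanges the two summands of a differential (the term multiplying on the left becomes one multiplying on the right and conversely), and the explicit signs $\epsilon_i$ and $(-1)^i$, together with the superpotential sign $(-1)^{d-1}$ picked up in the previous step, combine to reproduce exactly the signs $\epsilon_{d+1-i}$ and $(-1)^{d+1-i}$ of $D_{d+1-i}$. I expect the main obstacle to be the middle step—showing that $\bar\Theta$ is well defined and nondegenerate with the correct twist—whereas the differential compatibility is a careful but routine sign computation that the constants $\epsilon_i$ were designed to make work.
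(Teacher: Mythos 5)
Your proposal is correct and takes essentially the same route as the paper, which defers to \cite{BSW} (Lemma 6.4) and the proofs of Theorems 3.4 and 3.7 in \cite{yu}: the mechanism there is exactly your pairing $[(f\otimes g)\omega]$ with the twisted cyclic symmetry forced by the superpotential identity, which the paper itself records in the remark after the lemma as $[f,g]=(-1)^{|f||g|}[g,\varphi^*(f)]$, and your sign $(-1)^{i(d-i)}$ agrees with $(-1)^{|f||g|}$ since $(-1)^{i(d-i)}=(-1)^{i(d-1)}$. Your remaining ingredients --- separability of $S$ giving projectivity of $A\otimes W_i\otimes A$, the adjunction $\Hom_{A^e\lgr}(A^e\otimes_{S^e}W_i,A^e)\cong A\otimes (W_i)^*\otimes A$, the shift $\langle -d\rangle$ from generators of $W_i$ in degree $i$, and the transpose argument for nondegeneracy --- are likewise the standard steps of the cited proofs.
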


\begin{proof}
This is the twisted version of Lemma 6.4, \cite[Section 6]{BSW}. A more detailed proof in the case of twisted derivation algebras can be found in \cite{yu}, in the proofs of Theorem 3.4 and Theorem 3.7.  
\end{proof}

\begin{rmk}
For future reference we recall here that the duality $(W_i)^*\cong(W_{d-i})_\varphi$ is induced by the following perfect pairing:
\[ [\phantom{a} ,\phantom{i} ] :(W_{d-i})^*\otimes (W_i)^*\to S,\quad [ f,g]:=[(f\otimes g)\omega], \]
satisfying $[f,g]=(-1)^{|f||g|}[g,\varphi^*(f)]$ where $\varphi^*$ is the graded authomorphism of $A^*$ induced by $\varphi$.
\end{rmk}

Applying the functor $F=-\otimes_AS$ to the complex $\mc{W}^\bullet$ and forgetting the right $S$-action we obtain the following complex of projective modules in $A\lgr$:

\begin{equation} \label{eq:Wleft}
 \mc{W}^\bullet\otimes_AS:= \quad 0\to A\otimes W_{d} \xrightarrow{D_{d}} A\otimes W_{d-1} \xrightarrow{D_{d-1}}\cdots \xrightarrow{D_1} A\otimes W_0\to 0,  
\end{equation}
with differential $d_i(a\otimes w_1\ldots w_i)=\epsilon_i(aw_1\otimes w_2\ldots w_i)$. In the same way, by applying $G=S\otimes_A-$ to (\ref{eq:W}) we get a complex of graded projective right modules $S\otimes_A\mc{W}^\bullet$. The following lemma is essentially \cite[Proposition 7.5]{broom} since the arguments of the original proof apply to derivation algebras. 

\begin{lemma}\label{lemma:one sided complexes}
Suppose that the two-sided complex $\mc{W}^\bullet$ (\ref{eq:W}) satisfies $H^0(\mc{W}^\bullet)\cong A$ and let $n<|\omega|=d$. Then the following are equivalent:
\begin{enumerate}
\item $\mc{W}^\bullet$ is exact at $A\otimes W_j\otimes A$ for all $0\leq j\leq n $.
\item $\mc{W}^\bullet\otimes_AS$ is exact at $A\otimes W_j$ for all $0\leq j\leq n $. 
\item $S\otimes_A\mc{W}^\bullet$ is exact at $W_j\otimes A$ for all $0\leq j\leq n $. 
\end{enumerate}
\end{lemma}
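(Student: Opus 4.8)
The plan is to reduce everything to the elementary interaction between the \emph{augmented} two-sided complex and the base-change functors $-\otimes_AS$ and $S\otimes_A-$, exploiting two structural facts. First, each bimodule $A\otimes W_j\otimes A=A\otimes_SW_j\otimes_SA$ is projective not only as an $A$-bimodule but also as a left and as a right $A$-module, because $S$ is semisimple, so $W_j$ is projective over $S$ and $A\otimes_SW_j\otimes_SA$ is a summand of a free module on each side. Second, the hypothesis $H^0(\mc{W}^\bullet)\cong A$ says exactly that the augmented complex $\tilde{\mc{W}}^\bullet\colon\cdots\to A\otimes W_0\otimes A\xrightarrow{\mu}A\to 0$ (with $W_0=S$ and $\mu$ multiplication) is exact at $A\otimes W_0\otimes A$. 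Since $\tilde{\mc{W}}^\bullet\otimes_AS$ is the augmentation of (\ref{eq:Wleft}) with top term $A\otimes_AS=S$, and $S\otimes_A\tilde{\mc{W}}^\bullet$ is its right-hand analogue, I would prove $(1)\Leftrightarrow(2)$ in detail; the equivalence $(1)\Leftrightarrow(3)$ is identical after replacing $-\otimes_AS$ by $S\otimes_A-$ and using left- instead of right-projectivity (equivalently, by passing to $A^{op}$).

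For $(1)\Rightarrow(2)$ I would truncate. Assuming $\mc{W}^\bullet$ is exact at $A\otimes W_j\otimes A$ for $0\le j\le n$, the sequence
\[ 0\to B_n\to A\otimes W_n\otimes A\to\cdots\to A\otimes W_0\otimes A\xrightarrow{\mu}A\to 0,\qquad B_n=\im D_{n+1}=\ker D_n, \]
is exact. As a sequence of right $A$-modules every term except $B_n$ is projective and the right-hand term $A$ is free, so splitting off one short exact piece at a time from the right shows the whole sequence splits as right $A$-modules. Since an additive functor takes split short exact sequences to split short exact sequences, $-\otimes_AS$ keeps it exact: thus $\mc{W}^\bullet\otimes_AS$ is exact at $A\otimes W_j$ for $0\le j\le n-1$, and $B_n\otimes_AS\hookrightarrow A\otimes W_n$ is injective with image $\ker d_n$. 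As $D_{n+1}$ maps onto $B_n$ and $-\otimes_AS$ is right exact, we also get $\im d_{n+1}=B_n\otimes_AS=\ker d_n$, i.e.\ exactness at $A\otimes W_n$ as well. This is $(2)$.

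The substantive direction is $(2)\Rightarrow(1)$, and this is where the main obstacle lies: tensoring with $S$ destroys information, so recovering two-sided exactness needs a Nakayama-type lift. I would induct on $j$, $0\le j\le n$, the case $j=0$ being the hypothesis $H^0\cong A$. Assume $\mc{W}^\bullet$ is exact in degrees $<j$; then, as above, the truncated sequence $0\to Z_j\to A\otimes W_j\otimes A\to\cdots\to A\to 0$ with $Z_j=\ker D_j$ is exact and right-split, hence stays exact after $-\otimes_AS$. In particular $Z_j\otimes_AS\hookrightarrow A\otimes W_j$ is injective with image the cycles $\ker d_j$. Comparing with the exactness of $\mc{W}^\bullet\otimes_AS$ at $A\otimes W_j$ supplied by $(2)$, namely $\ker d_j=\im d_{j+1}$, and using that $D_{j+1}$ factors through $Z_j=\ker D_j$, one deduces that the corestriction $A\otimes W_{j+1}\otimes A\to Z_j$ becomes surjective after $-\otimes_AS$. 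Since $H^j(\mc{W}^\bullet)=\coker(D_{j+1}\colon A\otimes W_{j+1}\otimes A\to Z_j)$ and $-\otimes_AS$ is right exact, this gives $H^j(\mc{W}^\bullet)\otimes_AS=0$; the graded Nakayama lemma then forces the bounded-below graded bimodule $H^j(\mc{W}^\bullet)$ to vanish, completing the induction.

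The remaining points are bookkeeping rather than ideas: keeping the augmentation term in the correct homological position so that the hypothesis $H^0\cong A$ feeds the base case, and respecting the bound $n<d$, which guarantees that the term $A\otimes W_{j+1}\otimes A$ used at each inductive step exists (the complex need not be, and in the applications is not, exact at the top term $A\otimes W_d\otimes A$). Because every differential has degree zero and all modules in sight are positively graded, the graded Nakayama lemma applies verbatim to the homology bimodules, and the entire argument transports word for word to $S\otimes_A-$, yielding $(1)\Leftrightarrow(3)$.
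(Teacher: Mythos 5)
Your proof is correct and takes essentially the same route as the paper: the paper's own proof simply defers to \cite[Proposition 7.5]{broom}, and your argument---right-$A$-module splitting of the truncated augmented complex for $(1)\Rightarrow(2)$, a graded Nakayama induction on the homology bimodules for $(2)\Rightarrow(1)$, and $(1)\Leftrightarrow(3)$ by passing to $A^{op}$---is precisely the standard argument behind that citation. Your bookkeeping is also sound: reading exactness at $A\otimes W_0\otimes A$ through the augmentation $H^0(\mc{W}^\bullet)\cong A$, and using $n<d$ only to guarantee the term $A\otimes W_{j+1}\otimes A$ exists at each inductive step, match the intended reading of the statement.
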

\begin{proof}
The equivalence between $(1)$ and $(3)$ in the case of quiver with potential of degree $3$ is \cite[Proposition 7.5]{broom} and the argument used in the proof applies in our setting. $(1)\Leftrightarrow(2)$ is proved in the same way.
\end{proof}

\subsection{Frobenius algebras}

For a finite dimensional algebra $A$ denote by $\mc{S}(A)$ the set of isomorphism classes of simple modules and for any simple module $T$ let $P(T)$ be its projective cover and $E(T)$ its injective envelope. The algebra $A$ is called \emph{selfinjective} if the regular module $A\in A\lmod$ is injective. The \emph{Nakayama functor} of $A$ is the functor $\nu:A\lmod \to A\lmod$, $\nu(M):=\Hom_A(M,A)^*\simeq A^*\otimes_A M$. $A$ is called \emph{Frobenius} if $A\simeq A^*$ as left $A$-modules. Equivalently, if there exists a non-degenerate bilinear form $(-,-):A\times A\to \Bbbk$ satisfying $(ab,c)=(a,bc)$ for all $a,b,c\in A$. In this case the assigment $a\mapsto (a,-)$ gives the isomorphism of left $A$-modules $A\simeq A^*$. If $A$ is Frobenius, using the non-degenrate form $(-,-)$ we can define an automorphism of $A$, $\eta:A\to A$ such that for any $a\in A$, $\eta(a)$ is the unique element satisfying $(a,-)=(-,\eta(a))$. The automorphism $\eta$ is unique up to inner automorphism and it induces an isomorphism of bimodules $A^*\simeq A_\eta$ called a \emph{Nakayama authomorphism} of $A$.  If $M$ is any left $A$-module and $T$ any simple left $A$-module, we have:
\[\nu(M)= A_\eta \otimes_A M={}_{\eta^{-1}} M \quad\text{and}\quad \nu(T)={}_{\eta^{-1}} T, \]
so that $\nu$ induces a permutation on $\mc{S}(A)$ called the \emph{Nakayama permutation} of $A$. For any basic algebra $A$, selfinjectivity is equivalent to requiring that the assignment $\mc{S}(A)\to \mc{S}(A)$, $[S]\mapsto [\soc P(S)]$ is a bijection, hence giving the Nakayama permutation. 

Since $A^*$ is injective as left $A$-module, any Frobenius algebra is selfinjective. The converse is also true when we deal with basic algebras.  

\begin{thm}\label{thm: selfinj}
Let $A$ be a selfinjective algebra. If $\kdim\soc P(T)=\kdim T$ for any $T\in\mc{S}(A)$ then $A$ is Frobenius.
\end{thm}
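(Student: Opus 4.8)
The plan is to prove the theorem by producing an isomorphism $A\cong A^*$ of left $A$-modules, which by definition exhibits $A$ as Frobenius. Since $A$ is selfinjective, $A$ is an injective left module, and $A^*=\Hom_\Bbbk(A,\Bbbk)$ is always injective as a left module; hence both are finite direct sums of the indecomposable injectives $E(T)$, and it suffices to show that each $E(T)$ occurs with the same multiplicity in $A$ and in $A^*$. First I would record the two relevant decompositions. As a left module one has $A\cong\bigoplus_{T\in\mc{S}(A)}P(T)^{\oplus\dim_\Bbbk T}$, the multiplicity of $P(T)$ in the regular module being $\dim_\Bbbk T$ (here I use that $\Bbbk$ is algebraically closed, so $\End_A(T)=\Bbbk$). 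Dually, applying $\Hom_\Bbbk(-,\Bbbk)$ to the decomposition of the \emph{right} regular module into indecomposable projectives yields $A^*\cong\bigoplus_{T\in\mc{S}(A)}E(T)^{\oplus\dim_\Bbbk T}$, since the $\Bbbk$-dual carries indecomposable projective right modules to indecomposable injective left modules and preserves the $\Bbbk$-dimensions of the associated simples.

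Next I would use selfinjectivity to rewrite the decomposition of $A$ purely in terms of injectives. Because $A$ is selfinjective, each $P(T)$ is an indecomposable injective module, so $P(T)\cong E(\soc P(T))$ with $\soc P(T)$ simple. The assignment $\sigma\colon[T]\mapsto[\soc P(T)]$ is the Nakayama permutation of $\mc{S}(A)$: it is injective (from $\soc P(T)\cong\soc P(T')$ one gets $P(T)=E(\soc P(T))\cong E(\soc P(T'))=P(T')$, hence $T\cong T'$) and therefore bijective on the finite set $\mc{S}(A)$. Substituting gives $A\cong\bigoplus_{T}E(\sigma T)^{\oplus\dim_\Bbbk T}$, so the multiplicity of a fixed $E(S)$ in $A$ equals $\dim_\Bbbk(\sigma^{-1}S)$, whereas by the previous paragraph its multiplicity in $A^*$ equals $\dim_\Bbbk S$.

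Finally I would match the two counts using the hypothesis. The multiplicities agree for every $S$ if and only if $\dim_\Bbbk(\sigma^{-1}S)=\dim_\Bbbk S$ for all $S$, equivalently $\dim_\Bbbk T=\dim_\Bbbk(\sigma T)=\dim_\Bbbk\soc P(T)$ for all $T$, which is exactly the assumption $\kdim\soc P(T)=\kdim T$. Hence $A$ and $A^*$ share the same decomposition into indecomposable injectives and are isomorphic as left $A$-modules, so $A$ is Frobenius. I expect the only genuine obstacle to be the bookkeeping: verifying the dual decomposition $A^*\cong\bigoplus_{T}E(T)^{\oplus\dim_\Bbbk T}$ cleanly, and keeping the indexing through the permutation $\sigma$ consistent so that the dimension hypothesis is applied to the correct simple module (the socle of $P(T)$ rather than $T$ itself); every other step is a standard property of injective modules over a selfinjective finite-dimensional algebra.
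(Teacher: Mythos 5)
Your proposal is correct and follows essentially the same route as the paper: both arguments prove $A\cong A^*$ in $A\lmod$ by matching multiplicities of the indecomposable injectives through the Nakayama permutation $[T]\mapsto[\soc P(T)]$, with the hypothesis $\kdim\soc P(T)=\kdim T$ entering at exactly the same point. The only cosmetic differences are that you invoke algebraic closedness to set $\End_A(T)=\Bbbk$ and obtain $A^*\cong\bigoplus_{T}E(T)^{\oplus\kdim T}$ by dualizing the right regular module, whereas the paper (following Farnsteiner) keeps the division algebras $\End_A(T)$ in the multiplicity formula $n_T=\kdim T/\kdim\End_A(T)$ and produces the same decomposition by applying the Nakayama functor $\nu$ to $A$.
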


\begin{proof}
This is one implication of \cite[Theorem 3]{Far2}. We include the proof here for the convenience of the reader.

The Nakayama functor induces a permutation $\nu:\mc{S}(A)\to\mc{S}(A)$ and therefore an injection $\End_A(T)\hookrightarrow\End_A(\nu(T))$. Iterating we get the isomorphism $\End_A(T)\simeq\End_A(\nu(T))$ for any $T\in\mc{S}(A)$.

Writing $A=\bigoplus_{T\in\mc{S}(A)}n_TP(T)$ and then applying $\Hom_A(-,T)$ we get the following equation:
\begin{equation}\label{eq:dim simpl}
n_T=\dfrac{\kdim T}{\kdim \End_A(T)}=\dfrac{\kdim \nu(T)}{\kdim \End_A(\nu(T))}=n_{\nu(T)}.
\end{equation}

Since we also have $P(T)\simeq E(\nu(T))$ we get
\[(A_A)^*\simeq \nu(A)\simeq\bigoplus_{T\in\mc{S}(A)}n_TE(T)\simeq\bigoplus_{T\in\mc{S}(A)}n_TP(T)\simeq A \]
therefore $A$ is Frobenius.
\end{proof}

\begin{cor}\label{cor: frob is self}
Let $A$ be a basic algebra. Then $A$ is Frobenius if and only if it is selfinjective.
\end{cor}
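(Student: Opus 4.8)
The plan is to derive the corollary directly from Theorem \ref{thm: selfinj}, together with the general observation recorded just before that theorem. The forward implication requires no new work and no basicness hypothesis: the regular bimodule isomorphism $A \simeq A^*$ defining a Frobenius algebra exhibits $A$ as isomorphic to the injective left module $A^*$, so $A$ is selfinjective. Thus the entire content of the statement is the converse, and for that I would check that a basic selfinjective algebra automatically satisfies the socle-dimension condition appearing in the hypothesis of Theorem \ref{thm: selfinj}.

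For the converse, suppose $A$ is basic and selfinjective; I would verify $\kdim \soc P(T) = \kdim T$ for every $T \in \mc{S}(A)$. Since $\Bbbk$ is algebraically closed and $A$ is basic, the semisimple quotient $A/\rad A$ is a product of copies of $\Bbbk$, so every simple module is one-dimensional and $\kdim T = 1$. On the other side, selfinjectivity means each indecomposable projective $P(T)$ is also injective, hence is the injective envelope of its own socle; as indecomposable injective modules have simple socle, $\soc P(T)$ is simple and therefore one-dimensional as well. Consequently $\kdim \soc P(T) = 1 = \kdim T$ for all $T$, and Theorem \ref{thm: selfinj} immediately yields that $A$ is Frobenius.

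The only point requiring a moment's care is the claim that $\soc P(T)$ is simple for a selfinjective algebra; this follows from the fact that over a selfinjective algebra the indecomposable projectives coincide with the indecomposable injectives, and the latter always have simple socle. Given this standard fact the argument is immediate, so the corollary is essentially a repackaging of Theorem \ref{thm: selfinj} specialized to the basic case, where the numerical hypothesis becomes vacuous.
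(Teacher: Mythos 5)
Your proof is correct, and it rests on the same pillar as the paper's: the forward direction is the observation, recorded just before Theorem \ref{thm: selfinj}, that $A\simeq A^*$ makes $A$ injective, and the converse is an application of Theorem \ref{thm: selfinj}. The difference is in how the hypothesis $\kdim\soc P(T)=\kdim T$ is secured. The paper re-enters the \emph{proof} of Theorem \ref{thm: selfinj}: basicness gives $n_T=1$ for every $T\in\mc{S}(A)$, which, combined with the isomorphism $\End_A(T)\simeq\End_A(\nu(T))$ established there and the identification $\soc P(T)\simeq\nu(T)$, forces $\kdim T=\kdim\nu(T)=\kdim\soc P(T)$; notably, this argument never uses that $\Bbbk$ is algebraically closed. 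You instead verify the hypothesis externally, treating Theorem \ref{thm: selfinj} as a black box: algebraic closedness plus basicness makes every simple one-dimensional, and selfinjectivity makes each $P(T)$ an indecomposable injective, hence with simple, one-dimensional socle, so the numerical condition becomes vacuous. Both routes are valid. Yours is more elementary and self-contained, but it leans on the standing assumption that $\Bbbk$ is algebraically closed (fixed at the start of Section \ref{sec: prelim}, so legitimate here), whereas the paper's one-line argument works over an arbitrary field at the cost of referring back to the internals of the theorem's proof.
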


\begin{proof}
Since $A$ is basic we have $n_T=1$ for any $T\in\mc{S}(A)$ in the proof of Theorem \ref{thm: selfinj}. This implies $\kdim T=\kdim\nu(T)$ so $A$ is Frobenius by Theorem \ref{thm: selfinj}.
\end{proof}

Recall that a finite dimensional $\Bbbk$-algebra is called \emph{periodic} if $\Omega^n_{A^e}(A)\simeq A$ as $A^e$-modules, for some $n>0$. The smallest positive integer $n$ giving the isomorphism is called the \emph{period} of $A$. The fact that periodic algebras are selfinjective was first stated by M. C. R. Butler. The following theorem is a bit more general and can be found in \cite[Theorem 1.4]{GSS}. 

\begin{thm}\label{thm: periodic selfinj}
Let $A$ be an indecomposable finite dimensional algebra over an algebraically closed field $\Bbbk$. Then the following are equivalent:
\begin{enumerate}
\item There exists some $n>0$ such that $\Omega^n_A(T)\cong T$ for every simple module $T$ in $A\lmod$.
\item There exists an algebra automorphism $\sigma\in\Aut (A)$ such that $\Omega^n_{A^e}(A)\simeq{}_\sigma A$ and $\sigma(e_i)=e_i$ for a complete set of orthogonal idempotents $\{e_i\}_I$.
\end{enumerate}
Moreover under the above equivalent conditions the algebra $A$ is selfinjective.
\end{thm}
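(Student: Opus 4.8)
The plan is to treat the two implications separately, to obtain selfinjectivity as an intermediate step, and to base everything on one translation device between bimodule and one-sided syzygies. Since all three assertions are Morita invariant, I would first reduce to the case where $A$ is basic, so that the criterion recalled above (selfinjectivity is equivalent to the map $[S]\mapsto[\soc P(S)]$ being a bijection on $\mc{S}(A)$) and the fact that over a basic algebra every invertible $A$-bimodule has the form ${}_\sigma A$ become available. The translation device is this: if $Q_\bullet\to A$ is a projective resolution over $A^e$, then each $Q_i$ is projective as a one-sided module and $A_A$ is free, so for any $M\in A\lmod$ the complex $Q_\bullet\otimes_A M\to M$ is again exact (because $\mathrm{Tor}^A_{>0}(A,M)=0$) with projective terms. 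Hence $\Omega^n_{A^e}(A)\otimes_A M\cong\Omega^n_A(M)$ up to projective summands, and symmetrically on the right.

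The implication $(2)\Rightarrow(1)$ is then immediate: applying $-\otimes_A T$ to $\Omega^n_{A^e}(A)\cong{}_\sigma A$ gives $\Omega^n_A(T)\cong{}_\sigma A\otimes_A T={}_\sigma T$ up to projectives; since $\sigma$ fixes a complete set of primitive orthogonal idempotents it preserves the isomorphism class of each simple, so ${}_\sigma T\cong T$, and as neither ${}_\sigma T$ nor the minimal syzygy $\Omega^n_A(T)$ has a projective summand the isomorphism is genuine.

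Next I would derive selfinjectivity from $(1)$, since it is needed for the converse. From $\Omega^n_A(T)\cong T$ the minimal projective resolution of each simple $T$ is periodic, giving an exact sequence $0\to T\to P_{n-1}\to\cdots\to P_0\to T\to 0$; splicing infinitely many copies produces a doubly infinite acyclic complex of projectives having $T$ as a cocycle. Applying the duality $D=\Hom_\Bbbk(-,\Bbbk)$ to the periodic resolution shows simultaneously that every simple right module is cosyzygy-periodic, which is exactly what is needed to check that the $\Hom_A(-,A)$-dual of this complex is again acyclic. Thus every simple is totally reflexive (Gorenstein projective); as such modules are closed under extensions, every module is Gorenstein projective, and over such an algebra $DA$ is an injective Gorenstein-projective module, hence projective, so $A$ is selfinjective. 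Equivalently one may invoke Butler's theorem here.

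The substantial direction is $(1)\Rightarrow(2)$. Writing $N:=\Omega^n_{A^e}(A)$ for the minimal bimodule syzygy, the translation device gives $N\otimes_A T\cong T$ for every simple left module and $T'\otimes_A N\cong T'$ for every simple right module; the goal is to identify $N$ with ${}_\sigma A$. Now that $A$ is known to be selfinjective, $A^e$ is selfinjective and $\Omega$ is an autoequivalence of the stable category, so $N\otimes_A-$ induces the autoequivalence $\Omega^n$ and $N$ is invertible (its inverse being $\Omega^{-n}_{A^e}(A)$). Over a basic algebra every invertible bimodule is of the form ${}_\sigma A$ with $\sigma\in\Aut(A)$, and the normalisation $N\otimes_A T\cong T$ forces the induced permutation of $\mc{S}(A)$ to be trivial, i.e. $\sigma$ may be chosen to fix a complete set of primitive orthogonal idempotents; transporting $\sigma$ back through the Morita equivalence yields statement $(2)$ in general. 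The hard part is precisely this identification: showing that the bimodule syzygy is genuinely invertible and free of projective bimodule summands, so that $N\cong{}_\sigma A$ holds on the nose rather than merely up to projective bimodules and so that the automorphism $\sigma$ is actually produced. This is where selfinjectivity and the reduction to the basic case do the real work.
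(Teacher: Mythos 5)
Your translation device and the implication (2)$\Rightarrow$(1) are correct and standard, but both steps that carry the real weight of your architecture are gapped. The first gap is that your derivation of selfinjectivity from (1) via total reflexivity is circular. For the spliced doubly infinite complex $P_\bullet$ of projectives whose cocycles are the syzygies $\Omega^j_A(T)$, acyclicity of $\Hom_A(P_\bullet,A)$ is \emph{equivalent} to $\ext^i_A(T,A)=0$ for all $i>0$ and all simples $T$, and already $\ext^1_A(T,A)=0$ for all simples forces ${}_AA$ to be injective --- that is, the asserted acyclicity is precisely the selfinjectivity you are trying to prove, not a stepping stone towards it. What applying $(-)^*=\Hom_\Bbbk(-,\Bbbk)$ to the periodic resolutions actually gives is acyclicity of $(P_\bullet)^*$, a complex of \emph{injectives}; the complex you need, $\Hom_A(P_\bullet,A)\cong(\nu P_\bullet)^*$, differs from it by the Nakayama functor $\nu$, and the discrepancy is nonzero exactly when $A$ fails to be selfinjective. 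So the claim that cosyzygy-periodicity of the right simples ``is exactly what is needed'' is unjustified. The fallback ``invoke Butler's theorem'' begs the question: Butler's statement, as recalled in the paper, is that \emph{periodic} algebras (bimodule periodicity, i.e.\ essentially hypothesis (2)) are selfinjective, whereas your plan needs selfinjectivity from (1) alone, before $A^e$ is known selfinjective and before the stable bimodule category used in your (1)$\Rightarrow$(2) is even available.

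The second gap is in (1)$\Rightarrow$(2) itself: you obtain $N=\Omega^n_{A^e}(A)$ invertible only in the stable sense, $N\otimes_A\Omega^{-n}_{A^e}(A)\cong A\oplus(\text{projective bimodule})$, while the Picard-group fact you invoke (every invertible bimodule over a basic algebra is of the form ${}_\sigma A$) requires genuine invertibility; you explicitly concede that promoting stable invertibility to an isomorphism $N\cong{}_\sigma A$ ``on the nose'' is the hard part, and you do not supply that argument. So the core of the equivalence remains unproved. For comparison, the paper does not reprove the equivalence at all: it cites \cite{GSS} for (1)$\Leftrightarrow$(2) and proves only the final assertion, by a short computation \emph{from (2)}: $A^*\cong\Hom_A({}_\sigma A,A)^*\cong{}_\sigma A\otimes_AA^*\cong\Omega^n_{A^e}(A)\otimes_AA^*\cong\Omega^n_A(A^*)$, so the injective module $A^*$ embeds in a projective module, splits off, and is projective, whence $A$ is selfinjective. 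That route derives selfinjectivity from the bimodule statement rather than from (1), thereby sidestepping exactly the two steps where your proposal breaks down.
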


\begin{proof}
We refer to Theorem 1.4 in \cite{GSS} for the proof of the equivalence between statements (1) and (2). We recall here the argument showing that $A$ is selfinjective.

Suppose that $\Omega^n_{A^e}(A)\simeq{}_\sigma A_1$ for some $n>0$ and let $P^{n-1}$ be the $(n-1)$-th term of a bimodule projective resolution of $A$. Since $\Hom_A({}_\sigma A,A)\simeq A_A$ as right $A$-modules and $\Hom_A({}_\sigma A,A)^*\simeq {}_\sigma A\otimes_A A^*$ as left $A$-modules, we have the following isomorphisms of left $A$-modules:
\[A^*\simeq\Hom_A({}_\sigma A,A)^*\simeq {}_\sigma A\otimes_A A^*\simeq \Omega^n_{A^e}(A)\otimes_A A^*\simeq \Omega^n_A(A^*). \]

Therefore the injective module $A^*$ is isomorphic to a submodule of a projective module and hence it is projective. It follows by definition that $A$ is selfinjective.
\end{proof}

Since we will mainly be dealing with graded algebras we recall the following result from \cite{yu}.

\begin{lemma}\cite[Lemma 2.4]{yu}\label{lemma: graded frob}
Leta $A=\bigoplus_{i=0}^pA_i$ be a finite dimensional graded algebra with $A_0$ sempisimple and $A_p\neq 0$. Then the following are equivalent:
\begin{enumerate}
\item $A$ is Frobenius.
\item $\kdim A_0=\kdim A_p$ and if $\{x_1,\ldots,x_n\}$ is a basis of $A_p$, then the bilinear form $(a,b):=\left(\sum_{i}x_i^*\right)(ab)$ is non-degenerate.
\item $A\langle -p\rangle\cong A^*$ in $A\lgr$.
\end{enumerate}
\end{lemma}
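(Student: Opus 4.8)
The plan is to prove the three statements equivalent by establishing $(2)\Leftrightarrow(3)$ directly, then the trivial $(3)\Rightarrow(1)$, and finally the substantial implication $(1)\Rightarrow(2)$. For $(2)\Leftrightarrow(3)$ I would repackage the bilinear form as a morphism. Write $\phi:=\sum_i x_i^*\in A^*$ for the functional supported on $A_p$, so that $(a,b)=\phi(ab)$, and define $\Psi\colon A\to A^*$ by $\Psi(b)(x):=\phi(xb)$. Using that the left action on $A^*$ is $(c\cdot f)(x)=f(xc)$, a one-line check gives $\Psi(cb)=c\cdot\Psi(b)$, so $\Psi$ is a homomorphism of left $A$-modules. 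Since $\phi$ only detects degree $p$, for $b\in A_j$ the functional $\Psi(b)$ is supported on $A_{p-j}$, i.e. lives in degree $j-p$, which is exactly the degree of $b$ inside $A\langle -p\rangle$; hence $\Psi$ is a graded morphism $A\langle -p\rangle\to A^*$. As $\ker\Psi$ is precisely the right radical of $(-,-)$ and $\kdim A=\kdim A^*$, the map $\Psi$ is an isomorphism if and only if the form is non-degenerate. Moreover non-degeneracy of this graded form pairs $A_i$ with $A_{p-i}$ perfectly, forcing $\kdim A_i=\kdim A_{p-i}$ and in particular $\kdim A_0=\kdim A_p$. This yields $(2)\Leftrightarrow(3)$, and $(3)\Rightarrow(1)$ is immediate since, forgetting the grading, $A\langle -p\rangle\cong A^*$ gives $A\cong A^*$ as left modules, which is the definition of Frobenius.

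The core of the lemma is therefore $(1)\Rightarrow(2)$: upgrading the ungraded Frobenius isomorphism to a graded one with the precise shift $p$. I would argue via graded socles. Since $A$ is Frobenius it is selfinjective, so each indecomposable projective $Ae_i$ is injective with simple essential socle. Because $A$ is positively graded, any nonzero element of $(Ae_i)_j$ generates a submodule concentrated in degrees $\ge j$, which must contain the essential socle; as that socle is simple, hence concentrated in a single degree, it follows that $\soc(Ae_i)$ sits in the top nonzero degree of $Ae_i$. When every such top degree equals $p$ one obtains $\soc({}_AA)=A_p$ (and symmetrically on the right); combined with $\kdim\soc({}_AA)=\kdim\modtop({}_AA)=\kdim A_0$, which holds for any Frobenius algebra, this gives $\kdim A_0=\kdim A_p$. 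The Frobenius form then restricts to a perfect pairing between $\modtop(A)=A_0$ and $\soc(A)=A_p$, and this restriction is exactly the form $\phi(ab)$ appearing in $(2)$.

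The main obstacle is precisely this last step: locating the socle in the grading. A Frobenius functional need not be homogeneous, so one must genuinely exploit the positive grading (through the essential-socle argument above) to pin $\soc(A)$ down in degree $p$. The argument relies on each indecomposable projective attaining the top degree $p$; this holds for the connected (indecomposable) graded algebras to which the lemma is applied, such as the derivation algebras studied here, and it is the hypothesis one must keep track of when passing from the ungraded Frobenius property to the sharp graded duality $A\langle -p\rangle\cong A^*$.
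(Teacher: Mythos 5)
The paper itself offers no proof of this lemma --- it is quoted verbatim from \cite[Lemma 2.4]{yu} --- so there is no in-paper argument to compare against and your proposal must stand on its own. Your overall plan is reasonable: the reduction $(2)\Leftrightarrow(3)$ via the map $\Psi(b)(x)=\phi(xb)$ is correct, $(3)\Rightarrow(1)$ is immediate, and your essential-socle argument correctly places $\soc(Ae_i)$ in the top nonzero degree of each indecomposable projective (you should note in passing that the socle is a graded submodule, since $\rad A=A_{\geq 1}$, before concluding it sits in a single degree). You are also right to flag the hypothesis that every indecomposable projective attains top degree $p$: the statement is false verbatim, e.g.\ for $\Bbbk[x]/(x^2)\times\Bbbk[y]/(y^3)$, which is Frobenius with $\kdim A_0=2\neq 1=\kdim A_2$.

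The genuine gap is the last step of $(1)\Rightarrow(2)$. Knowing $\soc(A)=A_p$ and that the Frobenius form pairs $\modtop(A)$ perfectly with $\soc(A)$ only handles the component $A_0\times A_p$, whereas (2) requires non-degeneracy of the homogeneous form $\lambda(ab)$, $\lambda=\sum_i x_i^*$, on all of $A$, i.e.\ a perfect pairing $A_i\times A_{p-i}\to\Bbbk$ for every $i$; moreover the restriction of a (generally inhomogeneous) Frobenius functional $f$ is not ``exactly'' $\sum_i x_i^*$ for a prechosen basis. The form in (2) genuinely depends on the basis: for the radical-square-zero algebra of the quiver $1\rightleftarrows 2$ (connected, Frobenius, socle in top degree $p=1$ with arrows $a,b$), the basis $\{a,a+b\}$ of $A_1$ gives $\sum_i x_i^*=a^*$ and a degenerate form, so (2) can only be read existentially and the functional must be built from $f$. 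The missing argument is short: set $\lambda:=f\circ\pi_p$ with $\pi_p$ the projection onto $A_p$; for $0\neq a\in A_i$ the subspace $aA_{p-i}=(aA)_p$ is nonzero because every nonzero right ideal meets $\soc(A_A)=A_p$, and it is itself a right ideal since $aA_{p-i}\subseteq\soc(A)$ annihilates $\rad A$; as $\ker f$ contains no nonzero one-sided ideal, $\lambda(aA)\neq 0$, and symmetrically on the left, so $\lambda(ab)$ is non-degenerate; finally choose the basis $\{x_i\}$ of $A_p$ with $\lambda(x_i)=1$, so that $\lambda=\sum_i x_i^*$. The same basis-choice point affects your $(3)\Rightarrow(2)$: a graded isomorphism $\Theta\colon A\langle -p\rangle\to A^*$ gives non-degeneracy only for the functional $\Theta(1)$, so there too you must choose the basis of $A_p$ adapted to $\Theta(1)$ rather than argue for an arbitrary one.
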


If $A$ is Frobenius with Nakayama automorphism $\eta$ then its enveloping algebra $A^e$ is also Frobenius (this holds more generally for every tensor product of Frobenius $\Bbbk$-algebras) with Nakayama automorphism $\bar{\eta}=\eta\otimes\eta^{-1}$ since $A^{op}$ has Nakayama automorhism $\eta^{-1}$. 

Moreover, if $A=\bigoplus_{i=0}^p$ is graded, $A^e$ is also graded with
\[ \left(A^e\right)_i=\bigoplus_{i=j+k}A_j\oplus A_k,\]
therefore $A^e=\bigoplus_{i=0}^{2p}\left(A^e\right)_i$ and $(A^e)^*\cong (A^e)_{\bar{\eta}}\langle -2p\rangle$ by Lemma \ref{lemma: graded frob}.

\begin{lemma}\label{lemma: envel frob}
Let $A=\bigoplus_{i=0}^pA_i$ be graded and Frobenius with $A_p\neq 0$ and Nakayama automorphism $\eta$. Then:
\begin{enumerate}
\item $\Hom_{A\lgr}(M,A)\cong {}_{\eta}(M^*)\langle p\rangle$ for any $M\in A\lgr$.
\item $\Hom_{A^e\lgr}(A,A^e)\cong {}_\eta A\langle p\rangle$.
\end{enumerate}
\end{lemma}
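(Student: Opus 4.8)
The plan is to derive both isomorphisms from the graded Frobenius structure of $A$ via the tensor--hom adjunction, obtaining (2) by applying (1) to the enveloping algebra. The only inputs I need are the ungraded bimodule Nakayama isomorphism $A^*\cong A_\eta$, its graded refinement coming from Lemma \ref{lemma: graded frob}(3), and the elementary conversion $N_\alpha\cong{}_{\alpha^{-1}}N$ between right- and left-twisted modules recorded in Section \ref{sec: prelim}.

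For (1), I would first upgrade $A^*\cong A_\eta$ to a graded bimodule isomorphism. Since $A$ is concentrated in degrees $0,\dots,p$ while $A^*=\Hom_\Bbbk(A,\Bbbk)$ sits in degrees $-p,\dots,0$, comparing degrees forces
\[ A^*\cong A_\eta\langle -p\rangle \]
as graded $A$-bimodules, which refines Lemma \ref{lemma: graded frob}(3) to bimodules. I then apply the tensor--hom adjunction
\[ \Hom_A(M,\Hom_\Bbbk(A,\Bbbk))\cong\Hom_\Bbbk(A\otimes_A M,\Bbbk)\cong\Hom_\Bbbk(M,\Bbbk)=M^* \]
and substitute $\Hom_\Bbbk(A,\Bbbk)=A^*\cong A_\eta\langle -p\rangle$. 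The shift $\langle -p\rangle$ on the target becomes a shift $\langle p\rangle$ on the Hom, and the right twist by $\eta$ on $A_\eta$ passes to the Hom, so that the natural right-module structure on $\Hom_A(M,A)$ reads $(M^*)_{\eta^{-1}}\langle p\rangle$; converting it to the left via $N_\alpha\cong{}_{\alpha^{-1}}N$ yields $\Hom_{A\lgr}(M,A)\cong{}_\eta(M^*)\langle p\rangle$. Taking $M=A$ fixes the normalization, since both sides then reduce to $A$ placed in degrees $0,\dots,p$.

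For (2), I would apply (1) verbatim to $B=A^e$, which is again graded Frobenius, now concentrated in degrees $0,\dots,2p$ with Nakayama automorphism $\bar\eta=\eta\otimes\eta^{-1}$ as recalled before the statement, and with $M=A$ viewed as a left $A^e$-module. This gives
\[ \Hom_{A^e\lgr}(A,A^e)\cong{}_{\bar\eta}(A^*)\langle 2p\rangle, \]
where now $A^*=\Hom_\Bbbk(A,\Bbbk)$ is the full bimodule dual, so no left/right conversion is needed. It remains to simplify the right-hand side. Using that the left $\bar\eta$-twist of an $A^e$-module amounts to twisting the underlying bimodule on the left by $\eta$ and on the right by $\eta^{-1}$, i.e. ${}_{\bar\eta}X\cong{}_\eta X_{\eta^{-1}}$, together with $A^*\cong A_\eta\langle -p\rangle$, the right twist cancels:
\[ {}_{\bar\eta}(A^*)\cong{}_\eta\bigl(A_\eta\bigr)_{\eta^{-1}}\langle -p\rangle\cong{}_\eta A\langle -p\rangle. \]
The shift $\langle 2p\rangle$ then produces ${}_\eta A\langle p\rangle$, as claimed.

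The adjunction and the degree counts are routine; the step demanding genuine care is the bookkeeping of twists and shifts. Specifically, I expect the main obstacle to be checking that one obtains $\eta$ rather than $\eta^{-1}$ and the shift $\langle p\rangle$ rather than $\langle -p\rangle$ in (1), and, in (2), that the left $\bar\eta$-twist of $A^*$ genuinely collapses to ${}_\eta A$ once its right $\eta^{-1}$-twist is absorbed into $A^*\cong A_\eta\langle -p\rangle$; it is easy to drop an inverse in the identities ${}_{\bar\eta}X\cong{}_\eta X_{\eta^{-1}}$ and $N_\alpha\cong{}_{\alpha^{-1}}N$.
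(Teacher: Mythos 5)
Your proof is correct and takes essentially the same route as the paper's: part (1) is the same tensor--hom adjunction argument based on the graded Nakayama isomorphism $A^*\cong A_\eta\langle -p\rangle$ (the paper runs it through $A\cong A^{**}$ and $\Hom_\Bbbk(A^*\otimes_A M,\Bbbk)$, you substitute $A\cong (A^*)_{\eta^{-1}}\langle p\rangle$ in the Hom target, which is the same computation), and part (2) is verbatim the paper's argument, applying (1) to $A^e$ with Nakayama automorphism $\eta\otimes\eta^{-1}$ and collapsing ${}_{\eta}(A^*)_{\eta^{-1}}\langle 2p\rangle\cong{}_{\eta}A\langle p\rangle$. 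Your twist and shift bookkeeping agrees with the paper's conventions throughout.
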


\begin{proof}
The first statement is the graded version of \cite[Proposition B.0.8]{kellerCY}:
\[ \Hom_{A\lgr}(M,A)\cong \Hom_{A\lgr}(M,A^{**})\cong \Hom_\Bbbk(A^*\otimes M, \Bbbk)\cong ({}_{\eta^{-1}}M\langle -p\rangle )^*\cong {}_{\eta}(M^*)\langle p\rangle.\]

To prove the second statement we use point (1) with $A^e$ and $A$ instead of $A$ and $M$, keeping in mind that $(A^e)^*\cong (A^e)_{\eta\otimes\eta^{-1}}\langle -2p\rangle$:
\[ \Hom_{A^e\lgr}(A,A^e)\cong {}_{\eta\otimes\eta^{-1}}(A^*)\langle 2p\rangle\cong {}_{\eta}(A^*)_{\eta^{-1}}\langle 2p\rangle \cong {}_\eta A \langle p\rangle. \]
\end{proof}

For finite dimensional twisted derivation algebras the selfinjective property is equivalent to exactness in some degrees of the associated complex $\mc{W}^\bullet$ (\ref{eq:W}). This fact was already known for Jacobian algebras of quiver with potentials (see \cite[Theorem 3.7]{HerIy}).

\begin{thm}\label{thm:self derivat alg}
Let $A=\mc{D}_\varphi(\omega,d-k)$ be a finite dimesional twisted derivation algebra with $|\omega|=d$ and such that the associated complex $\mc{W}^\bullet$ (\ref{eq:W}) satisfies $H^0(\mc{W}^\bullet)\cong A$ and it is exact in degree one: 
\begin{equation}\label{eq:W augm}
 0\to A\otimes W_{d}\otimes A \xrightarrow{D_{d}} \cdots \xrightarrow{D_{2}}A\otimes W_{1}\otimes A  \xrightarrow{D_1} A\otimes W_0 \otimes A \to 0. 
\end{equation}

Then the following are equivalent:
\begin{enumerate}
\item $A$ is selfinjective.
\item The complex (\ref{eq:W augm}) is also exact at $A\otimes W_{d-1}\otimes A$.
\end{enumerate}

If $A$ is Frobenius and $A=\bigoplus_{i=0}^pA_p$ with $A_p\neq 0$ then $\Ker D_d\cong {}_{\varphi\eta}A\langle d+p\rangle$ where $\eta$ is a Nakayama automorphism of $A$.
\end{thm}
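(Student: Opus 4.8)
The theorem has two parts: the equivalence (1)$\Leftrightarrow$(2), and the supplementary identification of $\Ker D_d$ in the Frobenius case. I would handle the equivalence first by exploiting the twisted self-duality of $\mc{W}^\bullet$ from Lemma \ref{lemma:selfdual complex}. The starting data is that $H^0(\mc{W}^\bullet)\cong A$ and that the complex (\ref{eq:W augm}) is exact in degree one, i.e.\ exact at every term $A\otimes W_j\otimes A$ for $0\le j\le 1$. By Lemma \ref{lemma:one sided complexes}, exactness of the two-sided complex in these low degrees is equivalent to exactness of the one-sided complexes $\mc{W}^\bullet\otimes_A S$ and $S\otimes_A\mc{W}^\bullet$ in the same degrees, so I can freely pass between the bimodule and one-sided pictures. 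The truncated complex
\[
0\to \Ker D_d \to A\otimes W_{d-1}\otimes A \xrightarrow{D_{d-1}}\cdots\xrightarrow{D_1} A\otimes W_0\otimes A\to A\to 0
\]
is then the beginning of a projective bimodule resolution of $A$ provided the complex is exact at $A\otimes W_{d-1}\otimes A$; conversely, exactness there is precisely condition (2).

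\textbf{The equivalence (1)$\Leftrightarrow$(2).} The key idea is to apply the bimodule duality $\Hom_{A^e\lgr}(-,A^e)$ and use that $\Hom_{A^e\lgr}(D_i,A^e)=D_{d+1-i}\langle -d\rangle$. Dualizing the complex (\ref{eq:W augm}) sends exactness at position $i$ to exactness at position $d-i$ of the dual complex, which (by the self-duality) is again $\mc{W}^\bullet$ up to the twist and shift. Concretely, exactness at $A\otimes W_1\otimes A$ in the original complex corresponds, under the duality, to exactness at $A\otimes W_{d-1}\otimes A$ of the dual complex. The subtle point is that the dual complex computes $\ext^\bullet_{A^e}(A,A^e)$, and selfinjectivity of $A$ is exactly the condition that controls whether these Ext-groups vanish in the relevant range. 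For the direction (1)$\Rightarrow$(2): if $A$ is selfinjective, I would argue that the one-sided complex $\mc{W}^\bullet\otimes_A S$ (a complex of projectives) which is exact in low degrees and has $A$ as its degree-zero homology, when dualized via the selfinjective duality $\Hom_A(-,A)$, produces a complex that is again of the same self-dual shape; selfinjectivity makes $\Hom_A(-,A)$ exact, so exactness at position $1$ forces exactness at position $d-1$. For the converse (2)$\Rightarrow$(1): exactness at both ends of the (one-sided) complex, combined with the self-duality, shows that the injective envelope of the relevant simple modules is projective, which by the criterion recalled before Theorem \ref{thm: selfinj} (the socle/top bijection) yields selfinjectivity. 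I would make this precise by tracking how $H^0$ and the top homology match up under the duality isomorphism $(W_{d-i})_\varphi\cong(W_i)^*$.

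\textbf{The supplementary Frobenius statement.} Assume now $A$ is Frobenius, graded with $A=\bigoplus_{i=0}^p A_i$ and $A_p\neq 0$, with Nakayama automorphism $\eta$. Under (1)$\Leftrightarrow$(2), the complex (\ref{eq:W augm}) is now exact everywhere except at the two ends, so it gives a projective bimodule resolution of $A$ whose top term's kernel is $\Ker D_d$, i.e.\ $\Ker D_d\cong \Omega^d_{A^e}(A)$ (the $d$-th syzygy). To identify this syzygy I would dualize the resolution using $\Hom_{A^e\lgr}(-,A^e)$: since $A$ is Frobenius, Lemma \ref{lemma: envel frob}(2) gives $\Hom_{A^e\lgr}(A,A^e)\cong {}_\eta A\langle p\rangle$. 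Applying the duality to the whole resolution and using $\Hom_{A^e\lgr}(D_i,A^e)=D_{d+1-i}\langle -d\rangle$ from Lemma \ref{lemma:selfdual complex}, the dual complex is again $\mc{W}^\bullet$ up to the twist $\varphi$ and the shift $\langle -d\rangle$. Comparing the two ends: the cokernel of $D_1$ is $A$, so dually the kernel of $D_d$ must be the $\Hom_{A^e\lgr}(-,A^e)$-dual of $A$ reassembled with the accumulated twist and shift. Combining the twist $\varphi$ coming from the twisted self-duality of the complex with the Frobenius twist $\eta$ and the shift from $\langle p\rangle$ together with the $\langle d\rangle$ from the complex's duality, I expect to arrive at $\Ker D_d\cong {}_{\varphi\eta}A\langle d+p\rangle$.

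\textbf{Main obstacle.} The hard part will be bookkeeping the twists and degree shifts correctly through the self-duality. There are several sources of twisting that must be combined: the twist $\varphi$ built into $(W_{d-i})_\varphi\cong(W_i)^*$, the Frobenius/Nakayama twist $\eta$ entering through $\Hom_{A^e\lgr}(A,A^e)\cong{}_\eta A\langle p\rangle$, and the interaction between left and right twisted actions (using identities such as $M_\alpha\cong{}_{\alpha^{-1}}M$ from the Notation). Getting the composite to come out as exactly ${}_{\varphi\eta}A\langle d+p\rangle$ — rather than, say, ${}_{\eta\varphi}A$ or with an inverse twist — requires care about the order of composition of automorphisms and the direction in which each twist acts. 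I would verify the final twist by checking it against the ungraded, untwisted selfinjective case (where $\varphi=\mathrm{id}$ and the statement should reduce to the known Jacobian-algebra result of \cite{HerIy}) to fix all signs and directions.
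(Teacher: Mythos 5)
Your overall mechanism --- transporting exactness through the twisted self-duality of $\mc{W}^\bullet$ and Lemma \ref{lemma:one sided complexes} --- is the paper's, and your direction (1)$\Rightarrow$(2) is essentially its argument. But your direction (2)$\Rightarrow$(1) has a genuine gap: you invoke the socle/top bijection criterion recalled before Theorem \ref{thm: selfinj}, which is stated only for \emph{basic} algebras, whereas derivation quotient algebras in the sense of \cite{BSW} are introduced precisely to cover non-basic algebras; moreover you never say how exactness at $A\otimes W_{d-1}\otimes A$ would force injective envelopes of simples to be projective. The paper proves both directions at once through a single clean equivalence: since $A$ is finite dimensional, $A$ is selfinjective if and only if $\ext^1_{A\lgr}(S,A)=0$; the degree $0,1$ exactness makes $\mc{W}^\bullet\otimes_A S$ the beginning of a graded projective resolution of ${}_AS$, so $\ext^1_{A\lgr}(S,A)$ is computed by the dual complex with terms $\Hom_{A\lgr}(A\otimes W_i,A)\cong W_i^*\otimes A\cong W_{d-i}\otimes A_\varphi\langle -d\rangle$ (\cite[Lemma 3.2]{yu} together with Lemma \ref{lemma:selfdual complex}), and after removing the twist by the autofunctor $-\otimes_A A_\varphi$, the vanishing of this $\ext^1$ is exactly condition (2) via Lemma \ref{lemma:one sided complexes}. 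Your own setup already contains every ingredient for this; the socle detour should be replaced by it.

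In the Frobenius supplement there is a second unjustified step: you claim the complex is now ``exact everywhere except at the two ends,'' so that $\Ker D_d\cong\Omega^d_{A^e}(A)$. Under the stated hypotheses this is not available: exactness is given only in degrees $0$ and $1$, and selfinjectivity adds only degree $d-1$ (Corollary \ref{cor:selfinj W ex} with $n=1$); nothing forces exactness in degrees $2,\ldots,d-2$, so $\Ker D_d$ need not be a syzygy of $A$ at all (and even with full exactness the index would be $d+1$, not $d$). The paper's proof deliberately avoids the syzygy interpretation: applying $\Hom_{A^e}(-,A^e)$ to the right-exact end $A\otimes W_1\otimes A\to A\otimes W_0\otimes A\to A\to 0$, left-exactness alone together with Lemma \ref{lemma: envel frob}(2) and Lemma \ref{lemma:selfdual complex} gives the exact sequence $0\to {}_\eta A\langle p\rangle\to A\otimes W_d\otimes A_\varphi\langle -d\rangle\xrightarrow{D_d\langle -d\rangle} A\otimes W_{d-1}\otimes A_\varphi\langle -d\rangle$, whence ${}_\eta A\langle p\rangle\cong\Ker\bigl(D_d\otimes_A A_\varphi\bigr)\langle -d\rangle$; applying $-\otimes_A A_{\varphi^{-1}}$ and $\langle d\rangle$, and using ${}_\eta A_{\varphi^{-1}}\cong{}_{\varphi\eta}A$ from the notation you cite, yields $\Ker D_d\cong{}_{\varphi\eta}A\langle d+p\rangle$. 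Your ``compare the two ends'' idea is exactly this computation, and your worry about bookkeeping the twists dissolves once the argument is phrased this way --- but as written, the reliance on a full bimodule resolution is a step that would fail.
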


\begin{proof}
First note that under our assumption, applying $-\otimes_AS$ to the complex $(\ref{eq:W augm})$ we obtain a complex of graded projective left $A$-modules that is exact in the first two positions, hence it gives the beginning of a graded projective resolution for the module ${}_AS$. Since $A$ is finite dimensional, it is selfinjective if and only if $\ext^1_{A\lgr}(S,A)=0$ and in turn this is equvalent to the exactness of the following complex in degree one:
\[\Hom_{A\lgr}(A\otimes W_0,A)\to \Hom_{A\lgr}(A\otimes W_1,A)\to\cdots \to \Hom_{A\lgr}(A\otimes W_{d}).\]

Since the $S$-bimodules are finitely generated we have isomorphisms:
 \[\Hom_{A\lgr}(A\otimes W_i,A)\cong W_i^*\otimes A\cong W_{d-i}\otimes A_\varphi\langle-d\rangle\] 
 or any $0\leq i\leq d$, where the first isomorphism follows from \cite[Lemma 3.2]{yu} and the second by the twisted selfduality of $\mc{W}^\bullet$ (Lemma \ref{lemma:selfdual complex}). Hence $A$ is selfinjective if and only if the complex
 \[W_{d}\otimes A_\varphi\langle-d\rangle \to W_{d-1}\otimes A_\varphi\langle-d\rangle \to \cdots \to W_0\otimes A_\varphi\langle-d\rangle \]
 is exact at $W_{d-1}\otimes A_\varphi\langle-d\rangle$. Since $\varphi$ is an automorphism, $-\otimes_AA_{\varphi}$ is an autofunctor, therefore the exactness of the above complex is equivalent to $(2)$ by Lemma \ref{lemma:one sided complexes}.
 
Assume that $A=\bigoplus_{i=0}^p A_p$ is Frobenius with Nakayama automorphism $\eta$ and $A_p\neq 0$. Then applying the functor $\Hom_{A^e}(-,A^e)$ to the complex $\mc{W}^\bullet$ and using Lemma \ref{lemma:selfdual complex} and Lemma \ref{lemma: envel frob} we get that the complex:
\[0\to {}_\eta A\langle p\rangle \to A\otimes W_{d}\otimes A_\varphi\langle-d\rangle \to  \cdots \to A\otimes W_0\otimes A_\varphi\langle-d\rangle \to A_\varphi\langle-d\rangle \to 0\]
is exact in the first and last two positions. Applying the autofunctor $-\otimes_{A}A_{\varphi^{-1}}$ and the degree-shift $\langle d\rangle$ we get $\Ker D_d\cong {}_{\varphi\eta}A\langle d+p \rangle$ as claimed.

\end{proof}

We also have the following fact about the exactness of the complex $\mc{W}^\bullet$. 


\begin{cor}\label{cor:selfinj W ex}
Let $A=\mc{D}_\varphi(\omega,d-2)$ be as in Theorem \ref{thm:self derivat alg}. If $\mc{W}^\bullet$ is exact in degree $i$ for $1\leq i \leq n<d$ then $\mc{W}^\bullet$ is also exact in degree $j$ for $(n-d)\leq i <d$.
\end{cor}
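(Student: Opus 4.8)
The plan is to transfer exactness from the bottom of $\mc{W}^\bullet$ to its top by dualising with $\Hom_{A^e\lgr}(-,A^e)$ and invoking the twisted self-duality of Lemma \ref{lemma:selfdual complex}. First I would record that, since $H^0(\mc{W}^\bullet)\cong A$ and $\mc{W}^\bullet$ is exact in degrees $1,\dots,n$, the augmented complex $\mc{W}^\bullet\to A\to 0$ is exact in positions $0,1,\dots,n$. Hence $\mc{W}^\bullet$ coincides with a projective bimodule resolution of $A$ through position $n$, and its dual computes the bimodule $\ext$ groups in that range: for $1\le i\le n$,
\[ \ext^i_{A^e\lgr}(A,A^e)\cong H^i\big(\Hom_{A^e\lgr}(\mc{W}^\bullet,A^e)\big). \]

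Next I would feed in the self-duality. By Lemma \ref{lemma:selfdual complex} the functor $\Hom_{A^e\lgr}(-,A^e)$ sends $\mc{W}^\bullet$ back to $\mc{W}^\bullet$, up to the twist $(-)_\varphi$ and the shift $\langle-d\rangle$, carrying the term in position $i$ to the term in position $d-i$ and $D_i$ to $D_{d+1-i}$. Therefore the cohomology of $\Hom_{A^e\lgr}(\mc{W}^\bullet,A^e)$ in degree $i$ is $H^{d-i}(\mc{W}^\bullet)$, again up to twist and shift, and combining with the previous display yields, up to the invertible autoequivalences $(-)_\varphi$ and $\langle-d\rangle$,
\[ H^{d-i}(\mc{W}^\bullet)\cong \ext^i_{A^e\lgr}(A,A^e), \qquad 1\le i\le n. \]
Since $A$ is selfinjective, its enveloping algebra $A^e$ is selfinjective as well (a tensor product of selfinjective $\Bbbk$-algebras is selfinjective; compare the discussion preceding Lemma \ref{lemma: envel frob}), so $A^e$ is injective as a graded module over itself and $\ext^i_{A^e\lgr}(A,A^e)=0$ for every $i\ge 1$. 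Feeding this back, $H^{d-i}(\mc{W}^\bullet)=0$ for $1\le i\le n$; that is, $\mc{W}^\bullet$ is exact in every degree $j$ with $d-n\le j\le d-1$, as asserted.

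The step I expect to be the crux is the vanishing $\ext^{i}_{A^e\lgr}(A,A^e)=0$ for $i\ge 1$, as this is where selfinjectivity of $A$ is genuinely used: without it these groups need not vanish and the conclusion fails. I read selfinjectivity as being in force under the hypotheses inherited from Theorem \ref{thm:self derivat alg}, and I would state this explicitly at the outset. Two subsidiary points must be checked with care. First, the identification of $H^i(\Hom_{A^e\lgr}(\mc{W}^\bullet,A^e))$ with $\ext^i_{A^e\lgr}(A,A^e)$ must be valid up to and including $i=n$; this needs only exactness of $\mc{W}^\bullet$ in positions $0,\dots,n$, which we have, since exactness in position $n$ already makes $A\otimes W_{n+1}\otimes A$ surject onto $\Ker D_n$. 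Second, the twist $(-)_\varphi$ and shift $\langle-d\rangle$ are irrelevant to the vanishing statement, being invertible autoequivalences of $A^e\lgr$. Finally, I would remark that routing the argument through the one-sided complexes of Lemma \ref{lemma:one sided complexes} is less convenient, because that lemma bridges two-sided and one-sided exactness only along an initial segment of positions, whereas the two-sided duality above delivers exactness directly in each top degree $d-i$.
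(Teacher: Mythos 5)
Your proof is correct, granting the selfinjectivity of $A$, and it takes a genuinely different route from the paper's. The paper's proof is one line: under the hypotheses $A$ is selfinjective, so $\ext^i_A(-,A)=0$ for all $i>0$, and one repeats the one-sided argument of Theorem \ref{thm:self derivat alg} — apply $-\otimes_AS$ to get a partial graded projective resolution of ${}_AS$, dualize with $\Hom_{A\lgr}(-,A)$, identify $\Hom_{A\lgr}(A\otimes W_i,A)\cong W_{d-i}\otimes A_\varphi\langle-d\rangle$, read the vanishing of $\ext^i_{A\lgr}(S,A)$ for $1\le i\le n$ as exactness of the one-sided complex in positions $d-n,\dots,d-1$, and return to the bimodule complex via Lemma \ref{lemma:one sided complexes}. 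You instead stay in $A^e\lgr$ throughout: you dualize $\mc{W}^\bullet$ itself with $\Hom_{A^e\lgr}(-,A^e)$, invoke the twisted self-duality of Lemma \ref{lemma:selfdual complex}, and kill $\ext^i_{A^e\lgr}(A,A^e)$ for $i\ge 1$ by selfinjectivity of the enveloping algebra. Your route buys two things: it avoids the one-sided/two-sided transfer entirely — and your closing remark is apt, since Lemma \ref{lemma:one sided complexes} as stated only bridges initial segments of positions, so the paper's own use of it at the top positions tacitly needs a routine symmetric variant — and it concentrates the use of selfinjectivity in a single vanishing statement; your care at $i=n$, checking that exactness at position $n$ suffices to compute $\ext^n_{A^e\lgr}(A,A^e)$ from $\mc{W}^\bullet$, is correct and worth keeping. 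The price is the auxiliary fact that $A^e$ is selfinjective when $A$ is: your pointer to the discussion before Lemma \ref{lemma: envel frob} covers only the Frobenius case, so add one line — over the algebraically closed base field a basic selfinjective algebra is Frobenius (Corollary \ref{cor: frob is self}), selfinjectivity is Morita invariant, and tensor products of Frobenius algebras are Frobenius, whence $A^e=A\otimes_\Bbbk A^{op}$ is selfinjective. Finally, note that you and the paper stand on the same ground regarding the crux: the paper's proof simply asserts that $A$ is selfinjective "under the hypothesis", while exactness of $\mc{W}^\bullet$ in degrees $1,\dots,n$ with $n<d-1$ does not by itself yield this (by Theorem \ref{thm:self derivat alg} selfinjectivity is equivalent to exactness at $A\otimes W_{d-1}\otimes A$), so your explicit flagging of that assumption is the right reading of the statement rather than a defect of your argument.
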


\begin{proof}
Under the hypotesis the algebra $A$ is selfinjective so $\ext^i_A(-,A)=0$ for any $i>0$. Therefore the claim is proved using the same argument of the proof of Theorem \ref{thm:self derivat alg}.
\end{proof}

\begin{rmk}
A consequence of Corollary \ref{cor:selfinj W ex} is that if $\mc{W}^\bullet$ is exact in position $i$ for $0<i\leq d/2+1$ then it is exact everywhere apart from position $0$ and $d$. 
\end{rmk}

\subsection{Almost Koszul algebras}

Let $V$ be an $S$-$S$-bimodule, $S$ semisimple, and let $T_S(V)$ be the tensor algebra of $V$ over $S$. If $R$ is a subspace of $V\otimes V$, the algebra $A=T(V)/\langle R \rangle$ is called a \emph{quadratic} algebra. For any quadratic algebra $A$ its \emph{quadratic dual} is defined as $A^!=T(V^*)/\langle R^\perp \rangle$ where $R^\perp$ is the space orthogonal to $R$ in $V^*\otimes V^*$. Note that the standard duality $(-)^*=\Hom_\Bbbk(-,\Bbbk)$ extends to a duality of $S$-$S$-bimodules, as explained at the beginning of Section \ref{section: deriv alg}.

The definition of almost Koszul algebra was first introduced by Brenner, Butler and King in \cite[Definition 3.1]{bbk}.

\begin{defin} \label{defin: alm kosz}
A graded algebra $A=\bigoplus_{n\geq0}A_n$ with semisimple degree zero part $A_0$ is called \emph{left almost Koszul} or $(p,q)$-\emph{Koszul} if there exist two integers $p,q>0$ such that
\begin{enumerate}
\item $A_i=0$ for all $i> p$,
\item there is a graded exact complex of left $A$-modules:
\[ 0\to W\to P_q \to P_{q-1}\to \ldots \to P_1 \to P_0 \to A_0 \to 0  \]
where $P_i$ is projective and generated by its compnent of degree $i$ for $i=0,\ldots q$ and $W=A_p\otimes (P_q)_q$.
\end{enumerate} 

If $W=A_0\langle p+q\rangle$ then we say that $A$ is left almost Koszul \emph{of periodic type}.
\end{defin}

\begin{rmk}
We recall that under the assumptions of Definition \ref{defin: alm kosz}, the algebra $A$ is called (left) \emph{Koszul} if 
\[ \ldots \to P_1 \to P_0 \to A_0 \to 0  \]
is a graded projective resolution of $A_0$ where each $P_i$ is generated by its compnent of degree $i$.
\end{rmk}

By \cite[Prop. 3.4]{bbk} any left almost Koszul algebra is also right almost Koszul. Moreover $W$ is always semisimple and concentrated in degree $p+q$.
 
\begin{rmk} From now on, by almost Koszul algebra we will mean a $(p,q)$-algebra such that $p,q\geq 2$. Thanks to \cite[Prop. 3.7]{bbk}, this condition guarantees that $A$ is a quadratic algebra generated by $A_1$ as a tensor algebra over $A_0$.
\end{rmk}

As in the case of Koszul algebras, the almost Koszul property is equivalent to some conditions on the \emph{Koszul complex} of $A$. For any quadratic algebra $A=T(V)/\langle R\rangle$ and $i\geq0$ we can define a $S$-$S$-bimodule $K^i\subseteq V^{\otimes i}$ as follows:
\[ K^0=S, \quad K^1=V, \quad K^{i+1}=V\otimes K^i\cap K^i\otimes V \text{ for all } i>1. \]

The two-sided Koszul complex of $A$ is 
\[ \cdots \to A\otimes K^n\otimes A \to\cdots \to A\otimes K^1\otimes A \to A\otimes K^0\otimes A \to 0, \]
with differentials given by the difference of the following two compositions:
\[ A\otimes K^n\otimes A \to AA_1\otimes K^{n-1}\otimes A \to A\otimes K^{n-1}\otimes A, \]
\[ A\otimes K^n\otimes A \to A\otimes K^{n-1}\otimes A_1 A \to A\otimes K^{n-1}\otimes A, \]
 analogously to the differentials of the complex $\mc{W}^\bullet$ form Lemma \ref{lemma:selfdual complex}. It is well known (\cite{BSGkoszul}, Remark after Definition 2.8.1) that $(A^!_n)^*\cong K^n\subseteq V^{\otimes n}$ for $n>0$. The following characterization of Koszul algebra can be found for instance in \cite[Proposition A.2]{BrGai}.
 
\begin{thm}
$A$ is Koszul if and only if its two-sided Koszul complex is a projective resolution of $A$ as an $A^e$-module.
\end{thm}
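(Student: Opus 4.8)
The plan is to test exactness of the (homological) two-sided Koszul complex $Q_\bullet$, with $Q_n=A\otimes K^n\otimes A$ and augmentation $\mu\colon Q_0=A\otimes K^0\otimes A\to A$ given by multiplication, by comparing it with the corresponding one-sided complex and thereby reducing the statement to the defining property of Koszulity. First I would record the standing structural facts: each $Q_n$ is a projective $A^e$-module, since $S$ is semisimple so that $K^n$ is a projective $S$-$S$-bimodule and $A\otimes_S K^n\otimes_S A$ is a summand of a sum of copies of $A^e$; in particular each $Q_n$ is projective on each side. Moreover $H_0(Q_\bullet)\cong A$ holds automatically, because $\operatorname{coker}(D_1)$ is the cokernel of $A\otimes V\otimes A\to A\otimes_S A\xrightarrow{\mu}A$, which is exactly $A=T_S(V)/\langle R\rangle$. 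Thus ``$Q_\bullet$ is a bimodule resolution of $A$'' amounts to the single assertion that $H_n(Q_\bullet)=0$ for all $n>0$.

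Next I would apply the functor $-\otimes_A S$. As in the reduction preceding Lemma \ref{lemma:one sided complexes}, the right-multiplication half of each differential dies (it lands in $A\otimes K^{n-1}\otimes A_{\ge1}A$, which is killed once the rightmost $A$ is reduced to $A_0=S$), and one identifies $Q_\bullet\otimes_A S$ with the left Koszul complex $\cdots\to A\otimes K^1\to A\otimes K^0=A\to0$, whose $n$-th term $A\otimes K^n$ is projective and generated in degree $n$. Its zeroth homology is always $A/A_{\ge1}=A_0=S$, so this complex resolves $S$ precisely when it is exact in positive degrees; by the definition of Koszulity (a graded projective resolution of $A_0$ with $P_n$ generated in degree $n$, recalling $K^n=(A^!_n)^*$) this happens if and only if $A$ is Koszul. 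The theorem therefore reduces to the equivalence: \emph{$Q_\bullet\to A$ is exact if and only if $Q_\bullet\otimes_A S\to S$ is exact.}

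For the forward implication this is immediate: if $Q_\bullet\to A$ is exact then, since each $Q_n$ is projective as a right $A$-module, $Q_\bullet$ is a right-projective resolution of the right-regular (hence free) module $A_A$, so $H_n(Q_\bullet\otimes_A S)=\operatorname{Tor}^A_n(A,S)$, which vanishes for $n>0$. The reverse implication is the main obstacle. Here I would invoke the hyper-Tor spectral sequence attached to the complex of flat right modules $Q_\bullet$ and the left module $S$,
\[ E^2_{p,q}=\operatorname{Tor}^A_p\big(H_q(Q_\bullet),S\big)\ \Longrightarrow\ H_{p+q}(Q_\bullet\otimes_A S), \]
whose abutment vanishes in positive total degree by hypothesis. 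The row $q=0$ contributes only $\operatorname{Tor}^A_p(A,S)$, equal to $S$ for $p=0$ and $0$ for $p>0$. Suppose, for contradiction, that some higher homology is nonzero and let $q_0>0$ be minimal with $H_{q_0}(Q_\bullet)\ne0$. Each $H_q(Q_\bullet)$ is a bounded-below, degreewise finite-dimensional graded bimodule, so by graded Nakayama $E^2_{0,q_0}=H_{q_0}(Q_\bullet)\otimes_A S\ne0$. Every differential leaving $E^r_{0,q_0}$ vanishes (its target has negative $p$), and every differential entering it originates in some $E^r_{r,q_0-r+1}$ with $r\ge2$; by minimality of $q_0$ these sources are zero unless $q_0-r+1=0$, in which case the source is $\operatorname{Tor}^A_{q_0+1}(A,S)=0$. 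Hence $E^\infty_{0,q_0}=E^2_{0,q_0}\ne0$ survives as a subquotient of $H_{q_0}(Q_\bullet\otimes_A S)=0$, a contradiction. Therefore $H_n(Q_\bullet)=0$ for all $n>0$, completing the equivalence and the theorem.

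I expect the spectral-sequence bookkeeping in the reverse direction to be the delicate point, specifically the verification that no differential can reach the surviving class $E^2_{0,q_0}$; this is exactly where the minimality of $q_0$ together with the right-freeness of $A$ (forcing the $q=0$ row to collapse) are used. A symmetric argument with $S\otimes_A-$ is available and yields the analogous test of exactness on the left, paralleling the three-way equivalence recorded for $\mc{W}^\bullet$ in Lemma \ref{lemma:one sided complexes}.
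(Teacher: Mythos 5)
Your argument is correct, but it takes a different route from the paper, which offers no in-text proof at all: the statement is simply quoted from the literature (\cite[Proposition A.2]{BrGai}), just as the analogous reduction between two-sided and one-sided complexes for $\mc{W}^\bullet$ is outsourced to \cite[Proposition 7.5]{broom} in Lemma \ref{lemma:one sided complexes}. What you do is supply the missing argument: you check projectivity of the terms and $H_0\cong A$ directly, reduce the bimodule statement to the one-sided one by applying $-\otimes_A S$ (correctly observing that the right-multiplication half of the differential dies), and then prove the nontrivial implication (exactness of $Q_\bullet\otimes_A S$ forces exactness of $Q_\bullet$) via the hyperhomology spectral sequence $E^2_{p,q}=\operatorname{Tor}^A_p(H_q(Q_\bullet),S)\Rightarrow H_{p+q}(Q_\bullet\otimes_A S)$ together with graded Nakayama; your bookkeeping there is sound --- the $q=0$ row collapses because $A$ is right-free, minimality of $q_0$ kills incoming differentials with positive target row, and the only remaining source is $\operatorname{Tor}^A_{q_0+1}(A,S)=0$, so $E^2_{0,q_0}$ survives to contradict the vanishing abutment. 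In effect you have proved the Koszul-complex analogue of Lemma \ref{lemma:one sided complexes} by a compact homological-algebra device, where \cite{broom} argues more directly. The one point to adjust is your final sentence of the reduction: the equivalence ``left Koszul complex exact $\Leftrightarrow$ $A$ Koszul'' is not literally ``by the definition of Koszulity.'' The direction ``exact $\Rightarrow$ Koszul'' is definitional (each $A\otimes K^n$ is generated in degree $n$), but ``Koszul $\Rightarrow$ the specific complex $A\otimes K^\bullet$ is exact'' requires the standard identification of the minimal linear resolution with the Koszul complex via $K^n\cong(A^!_n)^*$ (see \cite{BSGkoszul}); this is a standard citable fact, consistent with the level of rigor the paper itself adopts, but it should be cited rather than asserted as a definition.
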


Forgetting the right (resp. left) $A$-module structure in the two-sided Koszul complex we obtain the \emph{left} (resp. \emph{right}) \emph{Koszul complex} of the algebra $A$:
 \[ \cdots \to A\otimes K^n \to\cdots \to A\otimes K^1 \to A\otimes K^0 \to 0 .\]
 
The following lemma gives an important connection between the complex $\mc{W}^\bullet$ and the two-sided Koszul complex of a quadratic derivetion algebra $A=\mc{D}(\omega,|\omega|-2)$.

\begin{lemma} \label{lemma: Koszul subcomplex}
Let $A=\mc{D}(\omega,|\omega|-2)$ be a quadratic $\varphi$-twisted derivation algebra. Then the complex $\mc{W}^\bullet$ is a subcomplex of the two-sided Koszul complex of $A$. 
\end{lemma}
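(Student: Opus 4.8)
The plan is to establish the two ingredients that together exhibit $\mc{W}^\bullet$ as a subcomplex of the two-sided Koszul complex: the termwise inclusions $W_i\subseteq K^i$ of $S$-bimodules (which, tensoring with $A$ on both sides, yield injections $A\otimes W_i\otimes A\hookrightarrow A\otimes K^i\otimes A$), and the compatibility of the differentials. The latter I regard as routine: the Koszul differential $A\otimes K^i\otimes A\to A\otimes K^{i-1}\otimes A$ is the difference of the map multiplying out the left-most tensor factor and the map multiplying out the right-most one, and $D_i$ is assembled from exactly these two operations, together with the normalising sign $\epsilon_i$ and the sign $(-1)^i$. Thus, once $W_i\subseteq K^i$ is known, the inclusions (rescaled by the signs already incorporated in $\epsilon_i$) intertwine $D_i$ with the Koszul differential, so that $\mc{W}^\bullet$ embeds as a subcomplex; all the real work is in the inclusions themselves.

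Since $A=\mc{D}(\omega,d-2)$ is quadratic, its relation space is $R=W_2$, so $K^2=W_2$, and the low-degree inclusions $W_0\subseteq S=K^0$, $W_1\subseteq V=K^1$, $W_2=K^2$ are immediate. For $i\geq 3$ I would argue by induction on $i$, using the recursion $K^{i}=(V\otimes K^{i-1})\cap(K^{i-1}\otimes V)$. Granting the inductive hypothesis $W_{i-1}\subseteq K^{i-1}$, it suffices to prove the two inclusions $W_i\subseteq V\otimes W_{i-1}$ and $W_i\subseteq W_{i-1}\otimes V$; combined with the hypothesis these give $W_i\subseteq (V\otimes K^{i-1})\cap(K^{i-1}\otimes V)=K^i$, completing the induction.

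For both inclusions I would use the standard criterion that, for a subspace $U\subseteq V^{\otimes i-1}$, one has $V\otimes U=\{w: [\mu w]\in U\ \forall\,\mu\in V^*\}$ and $U\otimes V=\{w:[w\nu]\in U\ \forall\,\nu\in V^*\}$, where $[\mu w]$ and $[w\nu]$ denote the contraction of the left-most, resp.\ right-most, factor. Writing a general element of $W_i$ as $[f\omega]$ with $f\in(V^{\otimes d-i})^*$, associativity of the brackets gives $[\mu[f\omega]]=[(\mu\otimes f)\omega]$; since $\mu\otimes f\in(V^{\otimes d-(i-1)})^*$ this lies in $W_{i-1}$, proving $W_i\subseteq V\otimes W_{i-1}$. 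For the other inclusion, associativity gives $[[f\omega]\nu]=[f[\omega\nu]]$, and here I would invoke the defining relation of a $\varphi$-twisted superpotential, namely $[\omega\nu]=(-1)^{d-1}[\varphi^*(\nu)\omega]$, to rewrite this as $(-1)^{d-1}[(f\otimes\varphi^*(\nu))\omega]\in W_{i-1}$. This last step, converting a right-end contraction into a left-end one, is the crux: it is precisely where the superpotential symmetry enters, and it is the point at which the twist $\varphi^*$ and the global signs must be tracked with care. The remaining bookkeeping, namely checking that the twisted right $S$-action on the $W_i$ from Lemma \ref{lemma: W twisted} matches the one inherited by the $K^i$ from $K^2=W_2$, so that the inclusions are genuinely maps of bimodules, is the other place demanding attention, although it presents no conceptual difficulty.
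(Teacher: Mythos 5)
Your proof is correct and takes essentially the same route as the paper, whose own proof simply cites \cite[Lemma 6.5 and Remark 6.6]{BSW} and names as the crucial point exactly the inclusions $W_k\subseteq K^k\cong (A^!_k)^*$, which you reconstruct by the standard induction ($W_2=R=K^2$, then $W_i\subseteq (V\otimes W_{i-1})\cap(W_{i-1}\otimes V)$ via the contraction criterion, with the twisted superpotential identity $[\omega\nu]=(-1)^{d-1}[\varphi^*(\nu)\omega]$ converting right-end contractions into left-end ones). Your treatment of the differentials is also sound, provided one reads the paper's Koszul differential with the bar-resolution sign $(-1)^i$ on the right-multiplication term (as the paper's phrase ``analogously to the differentials of $\mc{W}^\bullet$'' indicates), after which the overall scalars $\epsilon_i$ are absorbed by a degreewise rescaling of the inclusions, exactly as you say.
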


\begin{proof}
This fact is proved in Lemma 6.5 and Remark 6.6 in \cite{BSW}. The crucial observation is that in general we have inclusions $(W_k)_\varphi\subseteq (A^!_k)^*$ for $0\leq k\leq |\omega|$ with equality if $A$ is Koszul.
\end{proof}

It follows from \cite[Prop. 3.9]{bbk} that a graded algebra $A$ is left $(p,q)$-Koszul if and only if $A_n=0$ for all $n>p$, $K^m=0$ for all $m>q$ and the only non-zero homology groups of the left Koszul complex of $A$ are $A_0$ in degree zero and $A_p\otimes K^q$ in degree $p+q$.

\begin{lemma}\label{lemma: kosz compl}
For any $(p,q)$-Koszul algebra $A$ we have a graded exact sequence in $A\lgr$:
\[ 0\to A_p\otimes A_q^{!*} \to A\otimes A_q^{!*} \xrightarrow{d_q} A\otimes A_{q-1}^{!*} \to \cdots \to A\otimes A_1^{!*} \xrightarrow{d_1} A\otimes A_0^{!*}\to A_0\to 0.\]
\end{lemma}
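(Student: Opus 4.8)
The plan is to recognize the displayed complex as the left Koszul complex of $A$ and then read off exactness from the homological characterization of almost Koszul algebras recorded just above, namely the consequence of \cite[Prop. 3.9]{bbk}. First I would rewrite the left Koszul complex
\[ \cdots \to A\otimes K^n \to \cdots \to A\otimes K^1 \to A\otimes K^0 \to 0 \]
using the canonical isomorphism of $S$-$S$-bimodules $K^n \cong (A_n^!)^* = A_n^{!*}$ for $n\geq 0$ (with $K^0 = S \cong A_0^{!*}$, so that $A\otimes K^0 = A$, and $K^1 = V \cong A_1^{!*}$). This turns the terms $A\otimes K^n$ into $A\otimes A_n^{!*}$ and the Koszul differentials into the maps $d_n$ of the statement. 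Since $A$ is $(p,q)$-Koszul we have $K^m = 0$, equivalently $A_m^!=0$, for all $m>q$, so the complex is concentrated in homological degrees $0,\dots,q$ and its leftmost term is $A\otimes A_q^{!*}$.

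Next I would invoke the characterization following from \cite[Prop. 3.9]{bbk}: the only nonzero homology groups of this complex are $A_0$ in homological degree $0$ and $A_p\otimes K^q \cong A_p\otimes A_q^{!*}$ in the top position. Concretely, the statement that $H_0 = A_0$ means that the cokernel of $d_1\colon A\otimes A_1^{!*}\to A$ is $A_0$, i.e. the augmentation $A\otimes A_0^{!*}=A\to A_0\to 0$ makes the sequence exact at $A\otimes A_0^{!*}$; the vanishing of $H_i$ for $0<i<q$ gives exactness at each intermediate $A\otimes A_i^{!*}$; and $H_q = A_p\otimes A_q^{!*}$ identifies $\ker d_q$ with $A_p\otimes A_q^{!*}$, which simultaneously produces the injection on the left of the displayed sequence and exactness at $A\otimes A_q^{!*}$. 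Assembling these yields precisely the claimed exact sequence.

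Since all maps in sight are morphisms of graded modules in degree zero — the Koszul differentials are homogeneous and the identification $K^n\cong A_n^{!*}$ respects the internal grading, placing $K^q$ in internal degree $q$ and hence $A_p\otimes K^q$ in internal degree $p+q$ — the whole sequence lives in $A\lgr$. The one point requiring care, and the step I expect to be the main obstacle, is verifying that the abstract homological statement of \cite[Prop. 3.9]{bbk} genuinely identifies the top homology $H_q$ with the explicit submodule $A_p\otimes K^q\subseteq A\otimes K^q$ \emph{as graded modules}, so that the leftmost map is honestly the inclusion and the internal degree $p+q$ matches; once this identification is pinned down, the remainder is a direct translation of the characterization into exactness at each position.
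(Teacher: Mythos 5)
Your proposal is correct and matches the paper's own proof, which simply cites Proposition 3.9 and Remark 3.10 of \cite{bbk} (cf.\ also \cite[Lemma 2.2]{yu}): the displayed complex is the left Koszul complex rewritten via $K^n\cong A_n^{!*}$, and the $(p,q)$-Koszul characterization pins the homology to $A_0$ in degree zero and $A_p\otimes K^q$ at the top. The graded identification of the top homology with the submodule $A_p\otimes K^q\subseteq A\otimes K^q$, which you rightly flag as the delicate point, is exactly what Remark 3.10 of \cite{bbk} supplies.
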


\begin{proof}
This is a consequence of Proposition 3.9 and Remark 3.10 in \cite{bbk}. See also \cite[Lemma 2.2]{yu}.
\end{proof}

The following is a simple but interesting result on selfinjective almost Koszul algebras. 

\begin{thm} \label{thm:selfinj dual}
Let $A$ be a selfinjective $(p,q)$-Koszul algebra. Then $A_p\otimes A_q^{!*}\simeq A_0\langle p+q\rangle$ as graded left $A$-modules. In particular $A$ is of periodic type.
\end{thm}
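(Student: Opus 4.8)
The plan is to identify the top syzygy $W=A_p\otimes A_q^{!*}$ of $A_0$ explicitly. By Lemma \ref{lemma: kosz compl}, $W=\Ker\big(d_q\colon A\otimes A_q^{!*}\to A\otimes A_{q-1}^{!*}\big)$ is the $(q+1)$-st syzygy of $A_0$, and by the remark following Definition \ref{defin: alm kosz} it is semisimple and concentrated in the single degree $p+q$. Hence both $W$ and $A_0\langle p+q\rangle$ are semisimple graded $A$-modules living in one and the same degree, so the desired isomorphism of graded left $A$-modules amounts to an isomorphism of semisimple $A_0=S$-modules, i.e.\ to a matching of the multiplicities of each simple. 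The assertion about periodic type is then immediate from Definition \ref{defin: alm kosz}.

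First I would reduce $W$ to a socle. Since $A$ is selfinjective with $\kdim A_0=\kdim A_p$ (Lemma \ref{lemma: graded frob}), $A$ is Frobenius and $\soc A=A_p$ (the top graded piece lies in the socle and has the right dimension). Consequently every indecomposable projective $P(T)$ is injective with simple socle sitting in its top degree, so for the projective $P_q=A\otimes A_q^{!*}$ (generated in degree $q$) one has $\soc P_q\subseteq (P_q)_{p+q}$. On the other hand the top component $(P_q)_{p+q}=A_p\otimes A_q^{!*}=W$ is a semisimple submodule, hence contained in $\soc P_q$; thus $W=\soc P_q$. Computing this socle through the Nakayama permutation $\nu$ (which acts on simple tops by $\soc P(T)=\nu(T)$) yields $W\cong\nu\big(\modtop P_q\big)\langle p+q\rangle=\nu(A_q^{!*})\langle p+q\rangle$ as graded $S$-modules.

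Next I would dispose of the twist by $\nu$. The Nakayama permutation preserves the dimension of each simple: in the notation of Theorem \ref{thm: selfinj} one has $\kdim T=n_T=n_{\nu(T)}=\kdim\nu(T)$, so since $A_0=\bigoplus_T T^{\,\kdim T}$ is the regular $S$-module we get $\nu(A_0)\cong A_0$. Therefore it suffices to prove $A_q^{!*}\cong A_0=S$ as left $S$-modules, after which the isomorphisms above compose to give $W\cong\nu(A_q^{!*})\langle p+q\rangle\cong\nu(A_0)\langle p+q\rangle\cong A_0\langle p+q\rangle$.

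The hard part will be this last reduction, $A_q^{!*}\cong A_0$, equivalently $\kdim A_q^!=\kdim A_0$ with matching multiplicities. For this I would feed the exact sequence of Lemma \ref{lemma: kosz compl} into the graded Euler characteristic: writing the classes of all its terms in the Grothendieck group of graded $S$-(bi)modules produces the $(p,q)$-Koszul identity relating the graded dimensions of $A$ and of $A^!$, with a single correction term supported by $W$ in degree $p+q$. Selfinjectivity makes the graded structure of $A$ palindromic, $A\cong A^*\langle p\rangle$ (Lemma \ref{lemma: graded frob}), so $\kdim A_i=\kdim A_{p-i}$ and in particular $\kdim A_p=\kdim A_0$. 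Substituting $t\mapsto t\inv$ in this identity and invoking the palindromicity yields a second identity; comparing degrees in the two — the correction term sits in degree $p+q$, well above the degree-$q$ range that governs $A^!$ — forces the top component $A_q^!$ to be dual to $A_0=S$, that is $A_q^{!*}\cong A_0$ as $S$-modules. This Euler-characteristic computation is the only genuinely computational step and the main obstacle; once it is in place the three isomorphisms compose to $W\cong A_0\langle p+q\rangle$, and $A$ is of periodic type.
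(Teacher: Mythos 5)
Your proposal has a genuine gap: at every key step it silently upgrades ``selfinjective'' to ``Frobenius,'' and each citation offered for this upgrade is circular. You invoke Lemma \ref{lemma: graded frob} to get $\kdim A_0=\kdim A_p$ and the palindromicity $A\cong A^*\langle p\rangle$, but that lemma \emph{characterizes} graded Frobenius algebras; it does not assert that a graded selfinjective algebra satisfies these conditions, and for non-basic algebras selfinjective does not imply Frobenius (this is exactly why Theorem \ref{thm: selfinj} carries the extra hypothesis $\kdim\soc P(T)=\kdim T$ and why Corollary \ref{cor: frob is self} assumes $A$ basic). Likewise your chain $\kdim T=n_T=n_{\nu(T)}=\kdim\nu(T)$ misreads equation (\ref{eq:dim simpl}): in the paper that identity is derived \emph{under} the socle-dimension hypothesis of Theorem \ref{thm: selfinj}; unconditionally, $n_T=n_{\nu(T)}$ is (over an algebraically closed field) equivalent to the Frobenius property you are trying to smuggle in. Consequently $\soc A=A_p$, $\soc P_q\subseteq (P_q)_{p+q}$, $\nu(A_0)\cong A_0$, and the palindromic Hilbert series are all unjustified as stated; if your route worked verbatim it would prove that every selfinjective algebra over an algebraically closed field is Frobenius, which is false. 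On top of this, the step you yourself flag as the crux --- the Euler-characteristic argument forcing $A_q^{!*}\cong A_0$ --- is only a sketch: since $S$ is semisimple but not simple, $K_0$ of graded $S$-bimodules is a matrix ring, the Koszul-type identity is a matrix functional equation, and extracting from it an isomorphism of $S$-modules (not just a dimension count) is nontrivial. The natural source of $A_q^{!*}\cong S$ would be that $A^!$ is graded Frobenius, but in this paper that fact (Proposition \ref{prop: frob period dual}) is \emph{deduced from} the very theorem you are proving, so it cannot be used here.

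The paper's actual proof is much shorter and avoids all the socle and Nakayama-permutation bookkeeping. Since $A$ is selfinjective, $\Omega_A$ is an auto-equivalence of the stable category $A\psmod$, so for each simple $T$ the module $\Omega_A^{q+1}(T)$ is nonzero and indecomposable; by Lemma \ref{lemma: kosz compl} it is a direct summand of the semisimple module $W\cong A_p\otimes A_q^{!*}$, hence itself simple. Thus $\Omega_A^{q+1}$ induces a bijection on isomorphism classes of simples, and $\Omega_A^{q+1}(A_0)\cong W$ gives $W\cong A_0\langle p+q\rangle$ directly, with the periodic-type assertion following from Definition \ref{defin: alm kosz}. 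Where you try to compute $W$ as $\soc P_q$ twisted by $\nu$, the paper identifies $W$ as $\Omega_A^{q+1}(A_0)$ and lets the stable equivalence do the work --- no Frobenius property, no statement about $\nu$ preserving dimensions, and no information about $A^!$ is needed. To salvage your approach you would have to add Frobenius as a hypothesis (as Proposition \ref{prop: frob period dual} does), which weakens the theorem and breaks its later applications, and even then the reduction $A_q^{!*}\cong A_0$ would still need a complete argument.
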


\begin{proof}
Let $T$ be a simple $A$-module. Since $A$ is selfinjective, $\Omega_A^{q+1}(T)$ is a non-zero indecomposable direct summand of the semisimple module $W\simeq A_p\otimes A_q^{!*}$, by Lemma \ref{lemma: kosz compl}. Therefore $\Omega_A^{q+1}(T)$ is again a simple module and, since $A_0$ is finite dimensional, $\Omega_A^{q+1}(-)$ induces a bijection on the set of isomorphism classes of simple modules. Hence $\Omega^{q+1}_A(A_0)\simeq A_p\otimes A_q^{!*}\simeq A_0\langle p+q\rangle$ in $A\lgr$.
\end{proof}

Analogously to the classic Koszul setting, the quadratic dual of (quadratic) almost Koszul algebras has an interesting homological description.

\begin{prop}\label{prop: alm Kosz dual}
If $A$ is a left $(p,q)$-Koszul algebra with $p,q\geq2$, then $A^!$ is a left $(q,p)$-Koszul algebra. Moreover $A^!\simeq \bigoplus_{i=0}^q\ext^i_A(A_0,A_0)$.
\end{prop}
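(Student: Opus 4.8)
The plan is to establish the two claims separately, using the by-now standard machinery relating almost Koszul algebras to their Koszul complexes. The key structural input is the symmetry of the orthogonality construction: if $A = T(V)/\langle R\rangle$ then $A^! = T(V^*)/\langle R^\perp\rangle$, and the double-orthogonal identity $(R^\perp)^\perp = R$ gives $(A^!)^! \cong A$. Correspondingly, at the level of Koszul modules one has the duality $(A^!_n)^* \cong K^n_A$ noted after the two-sided Koszul complex above, and symmetrically $(A_n)^* \cong K^n_{A^!}$. These identifications will let me translate the numerical and homological conditions defining $(p,q)$-Koszulity for $A$ directly into those defining $(q,p)$-Koszulity for $A^!$.

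First I would treat the almost Koszul claim. By \cite[Prop. 3.9]{bbk}, as recalled in the excerpt, $A$ being left $(p,q)$-Koszul is equivalent to the three conditions $A_n = 0$ for $n>p$, $K^n_A = 0$ for $n > q$, and the left Koszul complex of $A$ having homology only $A_0$ in degree $0$ and $A_p \otimes K^q_A$ in degree $p+q$. I want to verify the mirror conditions for $A^!$, namely $A^!_n = 0$ for $n > q$, $K^n_{A^!} = 0$ for $n > p$, and the analogous homology computation. The dimension condition $A^!_n = 0$ for $n > q$ follows from $\dim_\Bbbk A^!_n = \dim_\Bbbk K^n_A$ (via $(A^!_n)^* \cong K^n_A$) together with $K^n_A = 0$ for $n > q$; symmetrically $K^n_{A^!} \cong (A_n)^* = 0$ for $n > p$. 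Since $p,q \geq 2$, the hypothesis guarantees $A$ and $A^!$ are honestly quadratic, so the identification $(A^!)^! \cong A$ is valid and the two Koszul complexes are genuinely dual to one another. I would then argue that the homology of the left Koszul complex of $A^!$ is forced by the same Prop. 3.9 criterion applied in reverse: the two-sided Koszul complexes of $A$ and $A^!$ are dual under $\Hom_{A^e}(-, A^e)$ (this is the archetype of the self-duality of $\mc{W}^\bullet$ in Lemma \ref{lemma:selfdual complex}), so exactness of the Koszul complex of $A$ away from the two extreme degrees transports to exactness of the Koszul complex of $A^!$ away from its two extreme degrees, and the surviving homology of $A^!$ at top degree is $A^!_q \otimes K^p_{A^!}$, matching the $(q,p)$ pattern.

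For the second claim, $A^! \cong \bigoplus_{i=0}^q \ext^i_A(A_0, A_0)$, the plan is to compute $\ext^i_A(A_0,A_0)$ using the beginning of the minimal projective resolution of $A_0$ coming from the left Koszul complex, which by Lemma \ref{lemma: kosz compl} reads $\cdots \to A \otimes A_i^{!*} \to \cdots \to A \otimes A_0^{!*} \to A_0 \to 0$ and is exact for $0 \le i \le q$. Applying $\Hom_{A\lgr}(-, A_0)$ and using that the differentials are minimal (they land in the radical), all induced maps vanish, so $\ext^i_{A\lgr}(A_0,A_0) \cong \Hom_{A\lgr}(A\otimes A_i^{!*}, A_0) \cong (A_i^{!*})^* \cong A^!_i$ for each $i$ in this range, and these exhaust the nonzero Ext groups since the resolution stops contributing beyond degree $q$. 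The graded ring structure on $\bigoplus_i \ext^i_A(A_0,A_0)$ is the Yoneda product, and matching it to the multiplication on $A^!$ is the standard Koszul-duality computation; I would note that for quadratic $A$ the Yoneda algebra is generated in degrees $0$ and $1$ with relations $R^\perp$, recovering $A^!$ exactly.

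The main obstacle I anticipate is not any single hard step but the bookkeeping of the duality: I must be careful that the self-duality of the two-sided Koszul complex really does interchange the roles of $A$ and $A^!$ (and of the parameters $p$ and $q$) compatibly with the homology placement in Prop. 3.9, and that the semisimplicity of the top term $W$ — guaranteed by \cite[Prop. 3.4]{bbk} — is what ensures the top homology of $A^!$ is concentrated in a single degree rather than spread out. The ring-isomorphism half of the second claim also requires verifying that the Yoneda product is identified with the quadratic-dual multiplication rather than merely getting a graded-vector-space isomorphism; this is where I would lean most heavily on the quadratic hypothesis $p,q \ge 2$.
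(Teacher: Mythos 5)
The paper's own ``proof'' is a citation (both claims are Proposition 3.11 and Remark 3.12 of \cite{bbk}), so what matters is whether your reconstruction of that argument is sound, and its central step is not. You assert that the two-sided Koszul complexes of $A$ and $A^!$ are dual under $\Hom_{A^e}(-,A^e)$ and that exactness therefore ``transports'' from one to the other. No such duality exists: applying $\Hom_{A^e}(-,A^e)$ to $A\otimes K^n\otimes A$ yields $A\otimes (K^n)^*\otimes A\cong A\otimes A^!_n\otimes A$, which is still a complex of $A$-bimodules — at no point do you obtain modules over $A^!$, so nothing about the Koszul complex of $A^!$ (whose terms are $A^!\otimes (A_n)^*\otimes A^!$) can be read off this way. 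The self-duality you invoke as the ``archetype'' (Lemma \ref{lemma:selfdual complex}) relates the complex $\mc{W}^\bullet$ of a derivation algebra \emph{to itself}, not to a complex over the quadratic dual, so it cannot serve as a model here. The mechanism that actually works — and is, in essence, the one behind \cite[Prop.~3.11]{bbk} — is plain $\Bbbk$-linear duality applied strand by strand in the internal grading: the internal-degree-$n$ strand of the left Koszul complex of $A$ is a finite complex of finite dimensional $S$-modules with terms $A_{n-i}\otimes K^i$, and the identifications $(A_{n-i})^*\cong K^{n-i}_{A^!}$, $(K^i)^*\cong A^!_i$ turn its $\Bbbk$-dual into a strand of the \emph{right} Koszul complex of $A^!$; exactness and the placement of the surviving homology then transfer internal degree by internal degree, with the left/right passage handled by \cite[Prop.~3.4]{bbk}. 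Your purely numerical reductions ($A^!_n=0$ for $n>q$ from $K^n=0$, and $K^n_{A^!}=0$ for $n>p$ from $A_n=0$) are correct and survive unchanged into the corrected argument.

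In the second half, the computation $\ext^i_A(A_0,A_0)\cong A^!_i$ for $0\leq i\leq q$ via minimality of the resolution from Lemma \ref{lemma: kosz compl} is fine, but your claim that ``these exhaust the nonzero Ext groups'' is false: the minimal resolution does not stop at degree $q$; it continues with $\Omega^{q+1}_A(A_0)\cong A_p\otimes A_q^{!*}$, a nonzero semisimple module, so $\ext^{q+1}_A(A_0,A_0)\neq 0$ in general — indeed \emph{always} when $A$ is selfinjective, since Theorem \ref{thm:selfinj dual} gives $\Omega^{q+1}_A(A_0)\cong A_0\langle p+q\rangle$ and hence $\ext^{q+1}_A(A_0,A_0)\cong\End_A(A_0)\neq 0$. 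The proposition asserts only an isomorphism with the truncated sum $\bigoplus_{i=0}^q\ext^i_A(A_0,A_0)$, as in \cite[Remark~3.12]{bbk}, and your Yoneda-product identification must be phrased for that truncation (products that would land in cohomological degree $>q$ are not claimed to vanish in the full Ext algebra; they simply lie outside the range being compared with $A^!$, consistently with $A^!_n=0$ for $n>q$).
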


\begin{proof}
The first and second statement are respectively Proposition 3.11 and Remark 3.12 of \cite{bbk}.  
\end{proof}

\begin{prop}\label{prop: frob period dual}

Let $A$ be a $(p,q)$-Koszul algebra with $p,q\geq 2$. If $A$ is Frobenius then both $A$ and $A^!$ are Frobenius and of periodic type.
\end{prop}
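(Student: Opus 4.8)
The plan is to handle $A$ and $A^!$ in turn, exploiting that $(A^!)^!\cong A$ and that $A^!$ is again almost Koszul, and to use Theorem \ref{thm:selfinj dual} as the engine on both sides. For $A$ itself there is nothing to do beyond an appeal to the hypotheses: since $A$ is Frobenius it is selfinjective, so Theorem \ref{thm:selfinj dual} applies and shows that $A$ is of periodic type. That theorem also records the isomorphism
\[ A_p\otimes A_q^{!*}\cong A_0\langle p+q\rangle \]
of graded left $A$-modules, which I would use to drive the dual side.

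Next I would transfer selfinjectivity to $A^!$. By Proposition \ref{prop: alm Kosz dual} the algebra $A^!$ is $(q,p)$-Koszul, with $A^!_0=S$ semisimple and $A^!_q\neq 0$, and $(A^!)^!\cong A$. The first step is to read the displayed isomorphism as a statement about $S$-bimodules: it exhibits $A_p$ as an invertible $S$-bimodule with inverse $A_q^{!*}$, each of the form ${}_1S_\sigma$ for an automorphism $\sigma$ of $S$. Dualizing this invertible bimodule yields the mirror identity
\[ A^!_q\otimes A_p^*\cong A^!_0\langle p+q\rangle .\]
Applying Lemma \ref{lemma: kosz compl} to the $(q,p)$-Koszul algebra $A^!$ produces a graded exact sequence resolving $S=A^!_0$ whose kernel term is exactly $A^!_q\otimes A_p^*$; combining this with the mirror identity shows $\Omega^{p+1}_{A^!}(S)\cong S\langle p+q\rangle$, hence $\Omega^{p+1}_{A^!}(T)\cong T$ up to a degree shift for every simple $A^!$-module $T$. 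By Theorem \ref{thm: periodic selfinj} (applied to each indecomposable summand) the algebra $A^!$ is therefore selfinjective and of periodic type.

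Finally I would upgrade $A^!$ from selfinjective to Frobenius. Because $A^!_q\cong{}_1S_\sigma$ is invertible, the Nakayama permutation of $A^!$ is induced by the automorphism $\sigma$ of $S$ and hence preserves the $\Bbbk$-dimensions of the simple modules; consequently $\kdim\soc P(T)=\kdim T$ for every simple $A^!$-module $T$, and Theorem \ref{thm: selfinj} yields that $A^!$ is Frobenius. Together with the hypothesis that $A$ is Frobenius and the periodic-type conclusions above, this establishes all four assertions.

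The step I expect to be the main obstacle is the passage, in the second paragraph, from the left-module isomorphism supplied by Theorem \ref{thm:selfinj dual} to the genuine $S$-bimodule statement needed to invert $A_p$ and obtain the mirror identity for $A^!$; once the bimodule structure is pinned down, the invertibility of $A_p$ and its dualization are formal. Should this bimodule bookkeeping prove delicate, an alternative is to invoke Lemma \ref{lemma: graded frob} directly: it then suffices to verify that the multiplication pairing $A^!_i\times A^!_{q-i}\to A^!_q$ is perfect for each $i$, which one extracts from the non-degenerate Frobenius form on $A$ through Koszul duality.
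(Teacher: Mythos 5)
Your treatment of $A$ itself is fine and matches the paper: Frobenius implies selfinjective, so Theorem \ref{thm:selfinj dual} gives periodicity. The problem is the pivot of your argument for $A^!$. Theorem \ref{thm:selfinj dual} supplies an isomorphism $A_p\otimes A_q^{!*}\cong A_0\langle p+q\rangle$ of graded \emph{left} $A$-modules only, and a left-module isomorphism carries no information about the right $S$-action; it therefore cannot be ``read as a statement about $S$-bimodules,'' and in particular it does not exhibit $A_p$ as an invertible $S$-bimodule with inverse $A_q^{!*}$. Since everything downstream --- the mirror identity $A^!_q\otimes A_p^*\cong A^!_0\langle p+q\rangle$, the identification of $\Omega^{p+1}_{A^!}(S)$ via Lemma \ref{lemma: kosz compl}, and the invertibility of $A^!_q$ used to verify the hypothesis $\kdim\soc P(T)=\kdim T$ of Theorem \ref{thm: selfinj} --- rests on this bimodule upgrade, the proposal as written has a genuine gap at its central step, which you flag but do not close. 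The upgrade is not hopeless, but its correct source is not the displayed isomorphism: one should instead use the graded Frobenius structure of $A$ directly (Lemma \ref{lemma: graded frob} gives $A^*\cong A_\eta\langle -p\rangle$ for a graded Nakayama automorphism $\eta$, whence $A_p=\soc A\cong {}_1S_\sigma$ with $\sigma=\eta|_{A_0}$), together with a right-module analogue of Theorem \ref{thm:selfinj dual} to pin down the right $S$-action on $A_q^{!*}$; your fallback suggestion via perfect pairings $A^!_i\times A^!_{q-i}\to A^!_q$ is closer to a workable route but is likewise only sketched.

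A secondary, fixable inaccuracy: even granting the mirror identity, the twist in the bimodule ${}_1S_\tau$ means $\Omega^{p+1}_{A^!}(T)\cong{}_{\tau^{-1}}T$ may be a \emph{different} simple module, whereas condition (1) of Theorem \ref{thm: periodic selfinj} demands $\Omega^n(T)\cong T$ on the nose; you must iterate to $n=(p+1)k$ with $k$ the order of the induced permutation of simples before invoking that theorem. For comparison, the paper does none of this: its proof is a one-line appeal to Theorem \ref{thm:selfinj dual} (for periodicity of $A$) followed by a citation of one implication of \cite[Proposition 3.3]{yu}, which is exactly the statement you are attempting to reprove. So your proposal, if completed, would make the proposition self-contained where the paper outsources the dual side to Yu --- a worthwhile goal, but the bimodule bookkeeping you defer is precisely the content of Yu's result, not a formality.
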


\begin{proof}

By Theorem \ref{thm:selfinj dual} $A$ is of periodic type therefore by one implication of \cite[Prop. 3.3]{yu} $A^!$ is also Frobenius and of periodic type.
\end{proof}

We also recall the following result that is Theorem 3.7 in \cite{yu}. It is the analogous of \cite[Theorem 6.8]{BSW} in case of Frobenius algebras, establishing a conncetion between almost Koszul Frobenius algebras and stably Calabi-Yau algebras. Note that, differently from the original statement, we don't need to require that $A$ is (almost Koszul) of periodic type since this is a consequence of Theorem \ref{thm:selfinj dual}.

\begin{thm}[\cite{yu}, Theorem 3.7]\label{thm: 3.7}
Let $A=\bigoplus_{i=0}^pA_i$ be a finite dimensional Frobenius algebra. Then $A$ is $(p,q)$-Koszul if and only if $A$ is isomorphic to a derivation algebra $\mc{D}_\varphi(\omega,q-2)$ for a twisted superpotential $\omega$ of degree $q$ such that the complex $\mc{W}^\bullet$ is exact everywhere apart from in degree zero, where $H^0(\mc{W}^\bullet)\cong A$ and in degree $q$.
\end{thm}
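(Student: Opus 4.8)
The plan is to phrase both implications through the standard homological description of $(p,q)$-Koszul algebras in terms of the two-sided Koszul complex $\mc{K}^\bullet$, namely \cite[Prop.~3.9]{bbk}, and to identify $\mc{W}^\bullet$ with the relevant truncation of $\mc{K}^\bullet$. Recall that for a quadratic algebra one has $K^k\cong(A^!_k)^*$, and that Lemma \ref{lemma: Koszul subcomplex} provides inclusions $(W_k)_\varphi\subseteq K^k$; the whole argument rests on upgrading these to equalities in the range $0\le k\le q$, which is exactly where the almost Koszul exactness lives. Both directions then amount to translating ``$\mc{W}^\bullet$ exact except at $0$ and $q$'' into ``the left Koszul complex has homology only $A_0$ in degree $0$ and $A_p\otimes K^q$ in degree $p+q$'' and back, using Lemma \ref{lemma:one sided complexes} to pass between the two-sided and one-sided complexes.

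For the implication ($\Rightarrow$), assume $A$ is Frobenius and $(p,q)$-Koszul. Since $p,q\ge 2$, $A$ is quadratic and generated over $A_0$ by $A_1$ by \cite[Prop.~3.7]{bbk}, and by Propositions \ref{prop: alm Kosz dual} and \ref{prop: frob period dual} its quadratic dual $A^!$ is again Frobenius, of periodic type, and $(q,p)$-Koszul, so $A^!=\bigoplus_{i=0}^q A^!_i$ has top degree $q$. The first task is to manufacture a twisted superpotential: applying Lemma \ref{lemma: graded frob} to the graded Frobenius algebra $A^!$ yields a non-degenerate top-degree functional $\lambda\colon A^!_q\to\Bbbk$, and dualizing the multiplication pairings $A^!_k\otimes A^!_{q-k}\to A^!_q\xrightarrow{\lambda}\Bbbk$ through $K^k\cong(A^!_k)^*$ produces an element $\omega\in K^q\subseteq V^{\otimes q}$. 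The Frobenius symmetry $(xy)=(y\,\beta(x))$ of $A^!$, with $\beta$ its Nakayama automorphism, then translates into $[\varphi^*(f)\omega]=(-1)^{q-1}[\omega f]$, exhibiting $\omega$ as a $\varphi$-twisted superpotential whose twist $\varphi$ is induced by $\beta$. Since the $(p,q)$-Koszul resolution is exact in degrees $\le q$, the inclusions of Lemma \ref{lemma: Koszul subcomplex} become equalities $(W_k)_\varphi= K^k$ for $0\le k\le q$; in particular $W_2\cong K^2=(A^!_2)^*$ equals the degree-two relation space of $A$, so $A=T_S(V)/\langle W_2\rangle=\mc{D}_\varphi(\omega,q-2)$ and $\mc{W}^\bullet$ coincides with the truncated two-sided Koszul complex. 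The homology statement of \cite[Prop.~3.9]{bbk}, carried through Lemma \ref{lemma:one sided complexes}, shows $\mc{W}^\bullet$ is exact apart from $H^0(\mc{W}^\bullet)\cong A$ and position $q$.

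For the converse ($\Leftarrow$), suppose $A\cong\mc{D}_\varphi(\omega,q-2)$ with $\mc{W}^\bullet$ exact except that $H^0(\mc{W}^\bullet)\cong A$ and at position $q$. Then $A$ is quadratic with $|\omega|=q$ and finite dimensional, so $A_n=0$ for $n>p$, and $A$ is selfinjective by Theorem \ref{thm:self derivat alg}. By Lemma \ref{lemma:one sided complexes} the one-sided complex $\mc{W}^\bullet\otimes_A S$ is exact in every position $0\le j<q$, hence is the start of the minimal graded projective resolution of $A_0=S$ with each $A\otimes W_j$ generated in degree $j$; minimality forces $(W_k)_\varphi\subseteq K^k$ to be equalities for $k\le q$, so $\mc{W}^\bullet\otimes_A S$ is the truncated left Koszul complex of $A$. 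Selfinjectivity together with the self-duality of $\mc{W}^\bullet$ identifies the top homology with the semisimple module $A_p\otimes K^q$ in degree $p+q$ and forces $K^m=0$ for $m>q$, so \cite[Prop.~3.9]{bbk} yields that $A$ is $(p,q)$-Koszul. That $A$ is then automatically of periodic type is Theorem \ref{thm:selfinj dual}, which is why the periodicity hypothesis of \cite{yu} may be dropped.

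The step I expect to be hardest is the construction of $\omega$ with its twist and the verification of the twisted superpotential identity $[\varphi^*(f)\omega]=(-1)^{q-1}[\omega f]$: one must carefully track how the Nakayama automorphism $\beta$ of the Frobenius algebra $A^!$ and the sign $(-1)^{q-1}$ propagate through the chain of identifications $K^k\cong(A^!_k)^*$ and the perfect pairing of Lemma \ref{lemma:selfdual complex}. Comparable care is needed to promote the inclusions $(W_k)_\varphi\subseteq K^k$ to equalities in the range $0\le k\le q$; this is precisely where almost Koszulity enters, and it is what makes $\mc{W}^\bullet$ agree with the truncated Koszul complex rather than sit inside it as a proper subcomplex.
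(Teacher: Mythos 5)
Your proposal cannot be compared line-by-line with a proof in the paper, because the paper does not prove this statement: it is recalled verbatim from \cite[Theorem~3.7]{yu}, and the only original content here is the remark that Yu's hypothesis ``of periodic type'' can be dropped, since Theorem~\ref{thm:selfinj dual} shows it is automatic for selfinjective $(p,q)$-Koszul algebras. You identified exactly this point at the end of your converse direction, which is the one thing the paper actually adds. Your reconstruction of Yu's argument (build $\omega$ from the top-degree Frobenius functional on $A^!$, identify $\mc{W}^\bullet$ with the truncated two-sided Koszul complex, and translate through \cite[Prop.~3.9]{bbk} and Lemma~\ref{lemma:one sided complexes}) is the right strategy and matches how Yu proves it.

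Two soft spots deserve correction. First, in the forward direction the equalities $(W_k)_\varphi=K^k$ for $k\leq q$ do not come from exactness of the $(p,q)$-Koszul resolution, as you claim; they come from the nondegeneracy of the pairings $A^!_k\otimes A^!_{q-k}\to A^!_q\xrightarrow{\lambda}\Bbbk$, i.e.\ from the Frobenius property of $A^!$ (Proposition~\ref{prop: frob period dual}), and the resulting twist is $\xi^{q+1}\bar{\beta}$, not just the automorphism induced by $\beta$ --- compare Theorem~\ref{thm:3.4} and Corollary~\ref{coroll: frob stabCY}; you flagged the sign-tracking as delicate but your stated twist omits $\xi^{q+1}$. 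Second, in the converse your assertion that selfinjectivity plus self-duality ``forces $K^m=0$ for $m>q$'' is not an argument, and this vanishing is a genuine hypothesis of the \cite[Prop.~3.9]{bbk} criterion you invoke. Either verify Definition~\ref{defin: alm kosz} directly --- where that vanishing is not needed: one shows $\Ker d_q=A_p\otimes W_q$ by a degree count, since the kernel is semisimple concentrated in degree $p+q$ by Theorem~\ref{thm:self derivat alg} and $d_q$ kills all of $A_p\otimes W_q$ because $A_{p+1}=0$ --- or derive $A^!_{q+1}=0$ (hence $K^m=0$ for $m>q$) from the fact that the $(q+1)$-st term of the minimal graded resolution of $S$ is generated in degree $p+q>q+1$, using the embedding of $A^!_{q+1}$ into $\ext^{q+1}_A(S,S)$ in internal degree $q+1$. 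With these repairs your outline is a faithful account of the cited proof.
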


\begin{cor}
Under the assumptions of Theorem \ref{thm:3.4}, if $A$ is  $(p,q)$-Koszul then $\Omega^{q+1}_{A^e}(A)\cong {}_{\varphi\eta}A\langle p+q\rangle $ where $\varphi$ is an automorphism such that $A\cong\mc{D}_\varphi(\omega,q-2)$.
\end{cor}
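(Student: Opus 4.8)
The plan is to read off $\Omega^{q+1}_{A^e}(A)$ directly from the complex $\mc{W}^\bullet$ and then invoke the Frobenius statement of Theorem \ref{thm:self derivat alg}. First I would apply Theorem \ref{thm: 3.7} to the Frobenius $(p,q)$-Koszul algebra $A$: this produces a twist $\varphi$ and a twisted superpotential $\omega$ of degree $q$ with $A \cong \mc{D}_\varphi(\omega, q-2)$, so that $d := |\omega| = q$, and such that $\mc{W}^\bullet$ is exact everywhere except that $H^0(\mc{W}^\bullet) \cong A$ in degree zero and there is nonvanishing homology in degree $d = q$.

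Next I would observe that this exactness pattern says precisely that the augmented complex
\[ 0 \to \Ker D_d \to A \otimes W_d \otimes A \xrightarrow{D_d} A \otimes W_{d-1} \otimes A \to \cdots \to A \otimes W_0 \otimes A \xrightarrow{\mu} A \to 0 \]
is exact, where $\mu\colon A\otimes W_0\otimes A\to A$ is the multiplication map (recall $W_0 = S$). Since each $A \otimes W_i \otimes A$ is a projective $A^e$-module and the differentials $D_i$ have image contained in $\rad(A^e)\cdot(A \otimes W_{i-1} \otimes A)$ --- being built from left and right multiplication by the degree-one factors of elements of $W_i$ --- this is the beginning of a \emph{minimal} projective bimodule resolution of $A$. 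Reading off the syzygies then gives $\Omega^{q+1}_{A^e}(A) = \Ker\bigl(D_d\colon A \otimes W_d \otimes A \to A \otimes W_{d-1} \otimes A\bigr) = \Ker D_d$.

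Finally, $A$ is Frobenius with $A = \bigoplus_{i=0}^p A_i$ and $A_p \neq 0$, and $\mc{W}^\bullet$ is exact in degree one (being exact in all degrees $1,\dots,d-1$), so the hypotheses of the Frobenius part of Theorem \ref{thm:self derivat alg} are satisfied. That theorem gives $\Ker D_d \cong {}_{\varphi\eta} A \langle d + p \rangle$, where $\eta$ is a Nakayama automorphism of $A$. Substituting $d = q$ yields $\Omega^{q+1}_{A^e}(A) = \Ker D_d \cong {}_{\varphi\eta} A \langle p + q \rangle$, as claimed.

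I expect no serious obstacle: the real content is already packaged in Theorems \ref{thm: 3.7} and \ref{thm:self derivat alg}, and the corollary amounts to matching the syzygy index $q+1$ with the top term $A\otimes W_d\otimes A$ of $\mc{W}^\bullet$ and checking that the shift $d + p$ coincides with $p + q$. The one point deserving a little care is the minimality of the resolution, which is what guarantees that the stable syzygy $\Omega^{q+1}_{A^e}(A)$ is literally $\Ker D_d$ and not merely $\Ker D_d$ up to a projective bimodule summand.
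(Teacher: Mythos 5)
Your proposal is correct and follows essentially the same route as the paper, whose proof consists of the single remark that the corollary is an immediate consequence of Theorem \ref{thm:self derivat alg}; you have merely made explicit the steps left implicit there (Theorem \ref{thm: 3.7} supplying the presentation $A\cong\mc{D}_\varphi(\omega,q-2)$ with $d=q$, the exactness of the augmented complex identifying $\Omega^{q+1}_{A^e}(A)$ with $\Ker D_d$, and the substitution $d+p=p+q$). Your additional care about minimality of the resolution, via the observation that the differentials land in $\rad(A^e)$ times the next term, is sound and a reasonable point to flag.
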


\begin{proof}
This is an immediate consequence of Theorem \ref{thm:self derivat alg}.
\end{proof}

For Frobenius almost Koszul algebras the twisting automorphism $\varphi$ can be described precisely in terms of the Nakayama automorphism of the quadratic dual algebra $A^!$. This was done in \cite[Theorem 3.4]{yu}.
 
\begin{thm}[\cite{yu}, Theorme 3.4]\label{thm:3.4}
Let $A$ be a Frobenius $(p,q)$-Koszul algebra and denote by $\xi$ be the graded automorphism defined on $A_i$ as the multiplication with $(-1)^i$. Then $\Omega^{q+1}_{A^e}(A)\cong{}_{\xi^{q+1}\bar{\beta}\eta}A\langle p+q\rangle$ in $A^e\lgr$.
\end{thm}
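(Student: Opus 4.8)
The plan is to combine the bimodule syzygy computation already available with an explicit identification of the twist $\varphi$ coming from the derivation presentation of $A$. By Theorem \ref{thm: 3.7} we may write $A\cong\mc{D}_\varphi(\omega,q-2)$ for a $\varphi$-twisted superpotential $\omega$ of degree $d=q$ such that $\mc{W}^\bullet$ is exact except in degrees $0$ and $q$, with $H^0(\mc{W}^\bullet)\cong A$. Since the projectives appearing in $\mc{W}^\bullet$ are $A\otimes W_i\otimes A$ for $0\le i\le q$, the Frobenius part of Theorem \ref{thm:self derivat alg} identifies the top syzygy $\Ker D_q=\Omega^{q+1}_{A^e}(A)\cong{}_{\varphi\eta}A\langle p+q\rangle$ in $A^e\lgr$, where $\eta$ is the Nakayama automorphism of $A$. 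Thus everything reduces to proving the identity $\varphi=\xi^{q+1}\bar{\beta}$ of graded algebra automorphisms of $A$, up to inner automorphism.

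To pin down $\varphi$ I would play the twisted self-duality of $\mc{W}^\bullet$ against the Frobenius structure of the quadratic dual $A^!$. By Proposition \ref{prop: alm Kosz dual} the algebra $A^!$ is $(q,p)$-Koszul, hence concentrated in degrees $0,\dots,q$, and by Proposition \ref{prop: frob period dual} it is Frobenius with Nakayama automorphism $\beta$. Exactness of $\mc{W}^\bullet$ in degrees $1,\dots,q-1$, combined with the general inclusion $(W_k)_\varphi\subseteq(A^!_k)^*$ from Lemma \ref{lemma: Koszul subcomplex} and the almost Koszul resolution of Lemma \ref{lemma: kosz compl}, should force $(W_k)_\varphi\cong(A^!_k)^*$ for every $0\le k\le q$; in particular the top space $W_q$ is spanned by the components of $\omega$ and is canonically dual to $A^!_q$.

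The key step is then to recognise the perfect pairing $[f,g]=[(f\otimes g)\omega]$ on $(W_i)^*\times(W_{q-i})^*$, recalled in the remark after Lemma \ref{lemma:selfdual complex}, as the top-degree multiplication pairing of $A^!$. Under the identification $(W_i)^*\cong A^!_i$, contracting $f\in A^!_i$ and $g\in A^!_{q-i}$ against $\omega\in W_q\cong(A^!_q)^*$ computes the image of the product $fg$ in $A^!_q$ evaluated against the top, i.e. the non-degenerate Frobenius form $(f,g)_{A^!}$ of Lemma \ref{lemma: graded frob}. Comparing the self-duality relation $[f,g]=(-1)^{|f||g|}[g,\varphi^*(f)]$ with the defining relation $(f,g)_{A^!}=(g,\beta(f))_{A^!}$ of the Nakayama automorphism $\beta$ and invoking non-degeneracy yields $\beta|_{A^!_i}=(-1)^{i(q-i)}\varphi^*|_{A^!_i}$ on each graded piece.

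Finally I would unwind the signs. Since $i^2\equiv i \pmod 2$ we have $(-1)^{i(q-i)}=(-1)^{i(q+1)}$, which is exactly the scalar by which $\xi^{q+1}$ acts in degree $i$; hence $\varphi^*=\xi^{q+1}\beta$ as graded automorphisms of $A^!$. Dualising the degree-one part $A^!_1=V^*$ back to $V=A_1$ turns $\varphi^*$ into $\varphi$ and $\beta$ into $\bar{\beta}$ while preserving the scalar $(-1)^{q+1}$, so that $\varphi=\xi^{q+1}\bar{\beta}$ on $V$ and therefore on all of $A$ up to inner automorphism. Together with the first paragraph this gives $\Omega^{q+1}_{A^e}(A)\cong{}_{\xi^{q+1}\bar{\beta}\eta}A\langle p+q\rangle$. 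I expect the main obstacle to be the middle step: verifying rigorously that the superpotential contraction $[f,g]$ coincides with multiplication in $A^!$, which needs the identification $(W_k)_\varphi\cong(A^!_k)^*$ to be compatible with the algebra structure of $A^!$ and with the $\varphi$-twist, together with consistent sign and twist conventions through the dualisation; once this is in place, the sign arithmetic collapses cleanly via the parity identity above.
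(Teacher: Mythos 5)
Your proposal is correct in outline, but be aware that the paper contains no proof of this statement at all: it is imported verbatim from \cite{yu}, and the paper's own logical flow runs in the \emph{opposite} direction to yours. Your first paragraph reproduces exactly the paper's unnamed corollary following Theorem \ref{thm: 3.7}: combine the presentation $A\cong\mc{D}_\varphi(\omega,q-2)$ with the Frobenius part of Theorem \ref{thm:self derivat alg} to get $\Omega^{q+1}_{A^e}(A)\cong{}_{\varphi\eta}A\langle p+q\rangle$, noting $\Ker D_q=\Omega^{q+1}_{A^e}(A)$. But the paper then \emph{deduces} the identification $\varphi=\xi^{q+1}\bar{\beta}$ by comparing that corollary \emph{with} Theorem \ref{thm:3.4} (this is Corollary \ref{coroll: frob stabCY}), whereas you prove the identification independently from the pairing $[f,g]=[(f\otimes g)\omega]$ and the Nakayama relation of $A^!$ --- which is indeed the substance of Yu's own argument, as the paper hints in the proof of Lemma \ref{lemma:selfdual complex}. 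So your route (3.7 plus direct twist identification $\Rightarrow$ 3.4) inverts the dependency used here (3.4 $\Rightarrow$ twist identification). This buys a self-contained derivation inside this paper's framework, at the price of a circularity risk: in \cite{yu} Theorems 3.4 and 3.7 are proved by intertwined computations, so a careful write-up must check that the proof of 3.7 does not already invoke 3.4; the cleanest packaging is to construct the presentation yourself, taking $\omega$ to be (the element of $W_q$ corresponding to) the dual of the Frobenius form of $A^!$, so that your key pairing identity holds by construction.

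Two points you flag do need genuine work, and both are fillable. First, Lemma \ref{lemma: Koszul subcomplex} asserts equality $(W_k)_\varphi=(A^!_k)^*$ only when $A$ is Koszul, and here $A$ is merely almost Koszul; your claimed equality for $0\le k\le q$ follows from uniqueness of minimal graded projective resolutions, since by Lemma \ref{lemma:one sided complexes} the complex $\mc{W}^\bullet\otimes_AS$ and by Lemma \ref{lemma: kosz compl} the truncated one-sided Koszul complex are both minimal resolutions of $S$ in homological degrees $\le q$, each term generated in the correct single degree, so a dimension count closes the inclusion. Second, the compatibility of $[(f\otimes g)\omega]$ with the multiplication of $A^!$ (well-definedness modulo $R^\perp$ and agreement with the top-degree Frobenius form of Lemma \ref{lemma: graded frob}) is exactly where the construction of $\omega$ from the Frobenius form of $A^!$ is needed, which is another reason to build the presentation explicitly rather than quote Theorem \ref{thm: 3.7} as a black box. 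Your sign arithmetic is correct: $i^2\equiv i\pmod 2$ gives $(-1)^{i(q-i)}=(-1)^{i(q+1)}$, the transpose exchanges $\varphi^*\leftrightarrow\varphi$ and $\beta\leftrightarrow\bar{\beta}$ with composition order reversed (harmless here since the scalar $\xi^{q+1}$ is central), and twists differing by inner automorphisms give isomorphic bimodules, so the conclusion $\Omega^{q+1}_{A^e}(A)\cong{}_{\xi^{q+1}\bar{\beta}\eta}A\langle p+q\rangle$ in $A^e\lgr$ stands.
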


\section{Stably Calabi-Yau properties} \label{sect:stab CY}

In this section we recall some facts about Calabi-Yau triangulated (graded) categories. We refer to \cite[Appendix A]{bock} and \cite{van} for a more detailed discussion.  

Let $(\mc{C},\Sigma)$ be a $\Hom$-finite ($\Bbbk$-linear) triangulated category. A triangulated autoequivalence $(\mb{S},\alpha)$, where $\alpha:	\mb{S}\Sigma\to\Sigma\mb{S}$ is a natural isomorpism, is called a \emph{Serre functor} if for any $X,Y\in\mc{C}$ there is a natural isomorphism 
\[t_{Y,X}:\Hom_\mc{C}(Y,\mb{S}X)\to \Hom_\mc{C}(X,Y)^* \]
making the following diagram commute:

\[ \xymatrix{\Hom_\mc{C}(\Sigma Y, \mb{S}\Sigma X) \ar[d]^{\Hom_\mc{C}(\Sigma Y,\alpha_X)} \ar[r]^{t_{\Sigma Y,\Sigma X}} & \Hom_\mc{C}(\Sigma X,\Sigma Y)^* \ar[r]^{\Sigma^*} & \Hom_\mc{C}(X,Y)^* \ar@{=}[d] \\
 \Hom_\mc{C}(\Sigma Y, \Sigma\mb{S} X) \ar[r]^{\Sigma^{-1}} & \Hom_\mc{C}( Y,\mb{S} X) \ar[r]^{t_{Y, X}} & \Hom_\mc{C}(X, Y)^*} \]

The natural isomorphism $t_{-,-}$ identifies the Serre functor $\mb{S}$ up to natural isomorphism (\cite[Lemma I.1.3]{reiten-VDBergh}). Moreover, if a Serre functor exists, it commutes with any autoequivalence of $\mc{C}$: if $F:\mc{C}\to\mc{C}$ is an autoequivalence, then $F\mb{S}F^{-1}$ is another Serre functor, hence isomorphic to $\mb{S}$.

For any object $X$ of $\mc{C}$, setting $X=Y$ in the isomoprhism above gives a distinguished function $\Tr_X=t_{X,X}(id_X):\Hom_\mc{C}(X,\mb{S}X)\to\Bbbk$ called the \emph{trace map}. This defines a non-degenerate bilinear pairing
\[\Hom_\mc{C}(X,\mb{S}Y)\times \Hom_\mc{C}(Y,X) \to \Bbbk \]
satisfying $\Tr_X(g\circ f)=\Tr_Y(\mb{S}(f)\circ g)$. In this case the natural isomorphism $\alpha: \mb{S}\Sigma \to \Sigma\mb{S}$ making $\mb{S}$ into a triangulated functor of $(\mc{C},\Sigma)$ is uniquely determined by the following formula (see \cite[Appendix A, p.30]{bock}, Theorem A.4.4):
\[\Tr_X(\Sigma^{-1}(\alpha_X\circ f))=-\Tr_{\Sigma X}(f) \]
for any $X\in\mc{C}$ and $f:\Sigma X\to\mb{S}(\Sigma X)$. Therefore for any distinguished triangle
\[ X\xrightarrow{f} Y\xrightarrow{g} Z \xrightarrow{h}  \Sigma X \]
the triangle 
\[ \mb{S}(X) \xrightarrow{\mb{S}(f)} \mb{S}(Y)\xrightarrow{\mb{S}(g)} \mb{S}(Z) \xrightarrow{\alpha_X\circ\mb{S}(h)}  \Sigma\mb{S} (X)\]
is also distinguished.

\begin{defin} \label{defin:cy}
The category $\mc{C}$ is called \emph{weakly }$d$\emph{-Calabi-Yau} if it has a Serre functor $\mb{S}$ and $d$ is the smallest positive integer such that there is an isomorphism of functors $\mb{S}\simeq \Sigma^d$ (see \cite[Section 2.6]{kellerCY}). We say that $\mc{C}$ is \emph{strongly} $d$\emph{-Calabi-Yau} if there is an isomorphism of triangulated functors $(\mb{S},\alpha)\simeq(\Sigma^d,(-1)^d)$.
\end{defin}

\subsection{Twisted stably Calabi-Yau algebras} 
In this section we want to give the definition of \emph{twisted} stably Calabi-Yau algebras and, analogously to what has been done in \cite{BSW} in the twisted Calabi-Yau setting, interpret some results from \cite[Section 4]{yu} according to our definition. As in \cite{yu}, in this section we consider ungraded Calabi-Yau properties.

Let $A$ be a selfinjective algebra and $A\psmod$ the category of projectively stable $A$-modules (see \cite[Ch. X]{ARS}). The category $A\psmod$ is $\Hom$-finite and triangulated with suspension functor $\Sigma=\Omega_A^{-1}$, where $\Omega_A$ is the syzygy functor. Let $\nu$ be the Nakayama functor and $\tau$ the Auslander--Reiten translation . If $A$ is selfinjective we have that $\tau\simeq\nu\Omega_A^2$ as endofunctors on $A\lmod$ (\cite[Prop IV.3.7]{ARS}). By the Auslander--Reiten formula we have the following isomorphisms:
\[ \pshom_A(X,Y)\simeq\ext_A^1(Y,\tau X)^*\simeq\pshom_A(Y,\Omega_A^{-1}\tau X)^* \]
which are natural in both $X$ and $Y$. Therefore the functor $\Omega^{-1}_A\tau$ satisfies the definition of Serre functor for the triangulated category $A\psmod$. Following \cite{yu}, we say that the algebra $A$ is \emph{stably Calabi-Yau} if its stable category $A\psmod$ is weakly Calabi-Yau. Since $\tau\simeq\nu\Omega_A^2$, $A$ is stably Calabi-Yau if there is some integer $d>0$ such that $\tau\simeq \Omega^{-d+1}_A$ or equivalently $\nu\simeq\Omega^{-d-1}_A$ as functors. 

Let $\phi:A\to A$ be an algebra endomorphism of $A$ and let $\phi^*={}_\phi A\otimes_A-:A\lmod\to A\lmod$ be the corresponding endofunctor sending a left $A$-module $M$ to $\phi^*(M)={}_\phi M$. Then $\phi^*$ is an autofunctor if and only if $\phi$ is an automorphism. Moreover $\phi^*$ preserves projective modules, therefore it induces a functor on the stable module category $A\psmod$ that commutes with the cosyzygy functor $\Omega_A^{-1}$.

We give the following definition of \emph{twisted stably Calabi-Yau} algebra (see also \cite[Section 2.5]{skew} for a stronger definition in the case of triangulated multi-graded categories).

\begin{defin}\label{defin:tw st cy}
Let $A$ be a selfinjective algebra and let $\phi:A\to A$ be an algebra automorphism. We say that $A$ is $\phi$-\emph{twisted stably Calabi-Yau} of dimension $d$ if $\Omega_A^{-d}\phi^*$ is a Serre functor for $A\psmod$:
\[ \pshom_A(X,Y)\simeq\pshom_A(Y,\Omega_A^{-d}\phi^* (X))^*\simeq\pshom_A(Y,\Omega_A^{-d}({}_\phi X))^* \]

Equivalently, if there is an isomorphism of functors
\[ \Omega_A^{-1}\tau\simeq\Omega_A^{-d}\phi^* \]
for some $d>0$. 
\end{defin}

\begin{rmk}
A conditions equivalent to Definition \ref{defin:tw st cy} is that there is an isomorphism of functors $\tau\simeq\Omega_A^{-d+1}\phi^*$ or $\nu\simeq\Omega^{-d-1}_A\phi^*$.
\end{rmk}

Note that if $\phi=\Id$ then $A$ is stably Calabi-Yau in the classical sense so every stably Calabi-Yau algebra is twisted stably Calabi-Yau. The following example shows that the converse is not always true.

\begin{exm}
Let $A$ be the path algebra the following quiver:
\[\xymatrix{ 3 \ar@{.}[dd] \ar[dr] \ar@{-->}[rr] & & 4 \ar[dr] \ar@{-->}[rr] & & 3\ar@{.}[dd] \\
          \ar@{-->}[r]   & 2 \ar[dr] \ar[ur] \ar@{-->}[rr] & & 6 \ar[dr] \ar[ur] \ar@{--}[r] & \\ 1 \ar[ur] \ar@{-->}[rr] & & 5  \ar[ur] \ar@{-->}[rr] & & 1}\]
(with identification along the vertical dotted lines) modulo the ideal generated by commutativity and zero-relations as indicated by the dashed arrows. The algebra $A$ is Frobenius and an easy computation shows that the Nakayama permutatin is $\nu(=\nu^{-1})=(14)(26)(35)$. Therefore, denoted with $S_i$ the left simple $A$-module generated by the idempotent $e_i$, we have $\nu(S_i)=A^*\otimes_A S_i=S_{\nu(i)}$ for any $i=1,\ldots,6$. On the other hand, the smallest integer $l>0$ such that $\Omega_A^{-l}(S_i)$ is again simple is $l=3$ and $\Omega_A^{-3}$ induces a permutation $\sigma:\mc{S}(A)\to\mc{S}(A)$ given by $\sigma=(13)(45)$. Then there can be no integer $d>0$ such that $\Omega^{-d-1}_A\simeq\nu$. However, $A$ is twisted stably Calabi-Yau with twist $\phi^*$ induced by the permutation $\phi=(15)(26)(34)$ since $\nu= (13)(45)\circ (15)(26)(34)=\sigma\circ\phi$ that implies $\nu\simeq\Omega_A^{-3}\phi^*$.
\end{exm}

\begin{prop} \label{prop: period tw cy}
Let $A$ be a basic, indecomposable, finite dimensional $\Bbbk$-algebra. If there exists an algebra automorphism $\sigma\in\Aut(A)$ and some integer $n\geq 0$ such that $\Omega^{n+1}_{A^e}(A)\cong {}_\sigma A$ as $A$-$A$-bimodules then $A$ is (Frobenius and) twisted stably Calabi-Yau with twist $\phi=\sigma\eta^{-1}$, where $\eta$ is a Nakayama automorphism of $A$.
\end{prop}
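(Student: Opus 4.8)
The plan is to upgrade the periodicity hypothesis to selfinjectivity, transport the bimodule isomorphism $\Omega^{n+1}_{A^e}(A)\cong {}_\sigma A$ to an isomorphism of endofunctors of the stable category $A\psmod$, and then match it against the twisted Serre functor of Definition~\ref{defin:tw st cy}. First I would show that $A$ is Frobenius. Since $n+1>0$, the hypothesis is exactly of the form treated in the proof of Theorem~\ref{thm: periodic selfinj}: that argument produces an isomorphism of left modules $A^*\cong \Omega^{n+1}_A(A^*)$, exhibiting the injective module $A^*$ as a syzygy and hence as a projective module, so $A$ is selfinjective. As $A$ is basic, Corollary~\ref{cor: frob is self} then gives that $A$ is Frobenius, so it has a Nakayama automorphism $\eta$ with $\nu(M)\cong {}_{\eta^{-1}}M$ for every $M\in A\lmod$.

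The key step is the standard comparison $\Omega^{k}_A(M)\simeq \Omega^{k}_{A^e}(A)\otimes_A M$ in $A\psmod$, valid for selfinjective $A$: tensoring a projective bimodule resolution $P_\bullet\to A$ with a left module $M$ yields a projective resolution $P_\bullet\otimes_A M\to M$ of left modules (each $P_i$ is projective as a right module, so $-\otimes_A M$ preserves exactness and sends $P_i$ to a projective left module), and comparing syzygies gives the claim up to projective summands. Applying this with $k=n+1$ and inserting the hypothesis yields, as endofunctors of $A\psmod$,
\[ \Omega^{n+1}_A \simeq \Omega^{n+1}_{A^e}(A)\otimes_A - \simeq {}_\sigma A\otimes_A - = \sigma^*, \]
where $\sigma^*(M)={}_\sigma M$.

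Finally I would compute the twist. Using the composition rule $\alpha^*\circ\beta^* = (\beta\alpha)^*$ for the twisting functors (which follows from ${}_\alpha({}_\beta M)={}_{\beta\alpha}M$) together with $\nu=(\eta^{-1})^*$, I obtain
\[ \phi^* = (\sigma\eta^{-1})^* = (\eta^{-1})^*\circ\sigma^* = \nu\circ\sigma^* \simeq \nu\,\Omega^{n+1}_A \]
on $A\psmod$. Since $\sigma^*$ is invertible with inverse $(\sigma^{-1})^*\simeq \Omega^{-n-1}_A$, and since $\phi^*$ commutes with $\Omega_A^{-1}$, this rearranges to $\nu\simeq \Omega^{-n-1}_A\phi^*$. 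By the Remark following Definition~\ref{defin:tw st cy} this is precisely the statement that $A$ is $\phi$-twisted stably Calabi-Yau (of dimension $n$) with twist $\phi=\sigma\eta^{-1}$. The main obstacle I anticipate is bookkeeping: getting the order of composition in $\alpha^*\circ\beta^*=(\beta\alpha)^*$ correct, and checking that all the isomorphisms above are natural as isomorphisms of functors on $A\psmod$ rather than merely object-wise, so that the identification with the Serre functor is legitimate.
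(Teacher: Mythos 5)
Your proof is correct and follows essentially the same route as the paper: selfinjectivity via the argument of Theorem~\ref{thm: periodic selfinj}, Frobenius via Corollary~\ref{cor: frob is self}, the identification $\Omega^{n+1}_A\simeq\sigma^*$ on $A\psmod$ by tensoring a projective bimodule resolution with a left module, and the conclusion $\nu\simeq\Omega_A^{-n-1}\phi^*$ with $\phi=\sigma\eta^{-1}$. The only cosmetic difference is that the paper packages the final step as a chain of natural isomorphisms of $\pshom$-spaces and appeals to uniqueness of the Serre functor, whereas you rearrange the functor isomorphisms directly using the remark after Definition~\ref{defin:tw st cy}.
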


\begin{proof}
By  Theorem \ref{thm: periodic selfinj} the algebra $A$ is selfinjective and, since it is basic, it is also Frobenius. Let $\eta$ be a Nakayama automorphism for $A$, so that $A^*\cong A_\eta$. Then $\nu(M)={}_{\eta^{-1}} A\otimes_A M\cong {}_{\eta^{-1}} M$ and $\Omega^{n+1}_A(M)\cong {}_\sigma A\otimes_A M\cong{}_\sigma M$ for every non-projective left module $M$. Therefore for any non-projective modules $X,Y$ we have the following natural isomorphisms:
\[ \begin{split} \pshom_A(X,Y) & \cong \pshom_A(Y,\Omega^{-1}\tau(X))^* \\  & \cong\pshom_A(Y,\Omega^{-1}\nu\Omega^2(X))^* \\ &\cong \pshom_A(Y,\Omega({}_{\eta^{-1}} X))^* \\ & \cong\pshom_A(\Omega^n(Y),{}_{\sigma\eta^{-1}} X)^* \\  & \cong\pshom_A(Y,\Omega^{-n}\sigma^*\nu( X))^*\end{split} \]
where $\sigma^*(-)={}_\sigma A\otimes_A-$. By uniqueness of the Serre functor for $A\psmod$ there is an isomorphism $\Omega_A^{-n}\sigma^*\nu\cong \Omega^{-1}\tau$ so that $A$ is the twisted stably Calabi-Yau with twist $\phi=\sigma\eta^{-1}$ (up to inner automorphisms).
\end{proof}

\begin{rmk}
Note that another choice of Nakayama automorphism $\eta'=a^{-1}\eta a$, $a\in A$, doesn't change the twisted Calabi-Yau dimension but gives another twist $\phi'=\sigma\eta'$ that differs from $\phi$ by an inner automorphism.
\end{rmk}

\begin{cor}\label{coroll: frob stabCY}
Let $A$ be a Frobenius $(p,q)$-Koszul algebra or equivalently $A\cong\mc{D}_{\varphi}(\omega,q-2)$ with $\mc{W}^\bullet$ exact apart from in degrees $0$ and $d$.  Then $A$ is twisted stably Calabi-Yau of dimension $q$ with respect to the twist $\varphi=\xi^{q+1}\bar{\beta}$ up to inner automorphism, where $\bar{\beta}$ is the automorphism induced by the Nakayama automorphism $\beta$ of $A^!$ and $\xi$ acts as the multiplication by $(-1)^m$ on $A_m$.
\end{cor}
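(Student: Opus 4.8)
The plan is to derive the statement as a direct consequence of the explicit bimodule periodicity recorded in Theorem \ref{thm:3.4} together with Proposition \ref{prop: period tw cy}. First, the two hypotheses are interchangeable: by Theorem \ref{thm: 3.7} an algebra $A=\bigoplus_{i=0}^pA_i$ is Frobenius $(p,q)$-Koszul precisely when $A\cong\mc{D}_\varphi(\omega,q-2)$ for a twisted superpotential $\omega$ of degree $q$ whose associated complex $\mc{W}^\bullet$ is exact apart from degree $0$, where $H^0(\mc{W}^\bullet)\cong A$, and degree $d=q$. In particular the parameter $d$ appearing in the statement equals $q$. So I will work with the Frobenius $(p,q)$-Koszul description and only invoke the derivation-algebra description to keep track of the twist $\varphi$.

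The key input is the description of the syzygy. Since $A$ is Frobenius $(p,q)$-Koszul, its quadratic dual $A^!$ is again Frobenius (Proposition \ref{prop: frob period dual}), so Theorem \ref{thm:3.4} applies and gives a graded bimodule isomorphism $\Omega^{q+1}_{A^e}(A)\cong{}_{\xi^{q+1}\bar{\beta}\eta}A\langle p+q\rangle$ in $A^e\lgr$, where $\eta$ is a Nakayama automorphism of $A$, $\beta$ is a Nakayama automorphism of $A^!$ with $\bar{\beta}$ the induced automorphism of $A$, and $\xi$ is the grading automorphism acting by $(-1)^m$ on $A_m$. As the twisted stably Calabi-Yau notion of Definition \ref{defin:tw st cy} is ungraded, I then forget the grading: the shift $\langle p+q\rangle$ becomes irrelevant and one is left with $\Omega^{q+1}_{A^e}(A)\cong{}_\sigma A$ as ungraded $A$-$A$-bimodules, where $\sigma:=\xi^{q+1}\bar{\beta}\eta$. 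Since each of $\xi$, $\bar{\beta}$, $\eta$ fixes the primitive idempotents, so does $\sigma$.

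Now I apply Proposition \ref{prop: period tw cy} to this $\sigma$ with $n+1=q+1$, that is $n=q$. It yields that $A$ is twisted stably Calabi-Yau of dimension $n=q$ with twist $\phi=\sigma\eta^{-1}$ up to inner automorphism; substituting $\sigma=\xi^{q+1}\bar{\beta}\eta$ the factor $\eta$ cancels and $\phi=\xi^{q+1}\bar{\beta}$, exactly as claimed. I expect no deep difficulty here, only bookkeeping that must be done consistently. First, one should verify that period $\Omega^{q+1}$ produces Calabi-Yau \emph{dimension} $q$ rather than $q+1$: this is read off from the final line $\mb{S}\simeq\Omega_A^{-q}\phi^*$ of the computation in Proposition \ref{prop: period tw cy} compared against Definition \ref{defin:tw st cy}. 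Second, Proposition \ref{prop: period tw cy} is phrased for basic indecomposable algebras, but that hypothesis is used there only to deduce selfinjectivity from periodicity via Theorem \ref{thm: periodic selfinj}; since $A$ is already Frobenius, the same chain of Auslander--Reiten isomorphisms applies verbatim, so basicness may be dropped. The one genuine subtlety is the order of composition of $\sigma$ and $\eta^{-1}$ in the twist, which is precisely why the identification of $\phi$ is only claimed up to an inner automorphism, in line with the remark following Proposition \ref{prop: period tw cy}.
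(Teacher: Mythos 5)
Your proof is correct and follows essentially the same route as the paper: Proposition \ref{prop: frob period dual} plus Theorem \ref{thm:3.4} give the bimodule periodicity $\Omega^{q+1}_{A^e}(A)\cong{}_{\xi^{q+1}\bar{\beta}\eta}A$, and Proposition \ref{prop: period tw cy} then converts this into the twisted stably Calabi-Yau property with twist $\phi=\xi^{q+1}\bar{\beta}\eta\eta^{-1}=\xi^{q+1}\bar{\beta}$. Your additional bookkeeping is sound and in fact sharper than the paper's own write-up: forgetting the grading shift $\langle p+q\rangle$ is exactly what is needed for the ungraded Definition \ref{defin:tw st cy}, the basicness hypothesis of Proposition \ref{prop: period tw cy} is indeed dispensable since $A$ is already Frobenius, and reading the dimension off the Serre functor $\Omega_A^{-n}\sigma^*\nu$ correctly yields dimension $n=q$, whereas the paper's proof misstates it as $q+1$ even though its own statement says $q$.
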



\begin{proof}
By Proposition \ref{prop: frob period dual} $A^!$ is also Frobenius and let $\beta$ be a Nakayama automorphism for $A^!$. By Theorem \ref{thm:3.4} we have the following isomorphism of $A$-$A$-bimodules: $\Omega^{q+1}_{A^e}(A)\simeq {}_{\xi^{q+1}\bar{\beta}\eta}A$. Therefore by Proposition \ref{prop: period tw cy} $A$ is twisted stably Calabi-Yau of dimension $q+1$ and twist $\phi=\xi^{q+1}\bar{\beta}\eta\eta^{-1}=\xi^{q+1}\bar{\beta}$.
\end{proof}

Let $A$ be a $(p,q)$-Koszul algebra and define $W=A_q\otimes A_q^{!*}$, $W_0=S$ and $W_n=W^{\otimes n}$ for any $n>0$. For any $n\geq 0$ we have the following (graded) exact sequence:
\[ 0\to W_{n+1}\to A\otimes A_q^{!*}\otimes W_n\to A\otimes A_{q-1}^{!*}\otimes W_n \to \ldots \to A\otimes A_1^{!*}\otimes W_n \to A\otimes A_0^{!*}\otimes W_n \to W_n \to 0  \]
with differentials as in Lemma \ref{lemma: kosz compl}. This complexes can the be spliced together to obtain a graded projective resolution of $S=A_0$ which is also minimal since each term $A\otimes A_i^{!*}\otimes W_n$ is generated in degree $n(p+q)+i$ (see \cite[Remark 2.3]{yu}). 
This observation gives the following fact from \cite{yu}.

\begin{lemma} \label{lemma:cy alm kosz}
Let $A$ be a Frobenius $(p,q)$-Koszul algebra with $p>2$. If $A$ is stably Calabi-Yau of dimension $d$ then $q+1$ divides $d+1$.
\end{lemma}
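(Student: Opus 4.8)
The plan is to reduce the Calabi--Yau condition to a statement about the syzygies of the semisimple module $A_0$ and then to exploit the explicit degrees in its minimal graded projective resolution. First I would observe that since $A_0$ is a direct sum of the (non-projective) simple modules and the Nakayama functor merely permutes these simples without shifting the grading, we have $\nu(A_0)\cong A_0$; hence the hypothesis that $A$ is stably Calabi--Yau of dimension $d$, i.e. $\nu\simeq\Omega^{-d-1}_A$ on $A\psmod$, yields $\Omega^{-d-1}_A(A_0)\cong A_0$, and therefore $\Omega^{d+1}_A(A_0)\cong A_0$ in $A\psmod$. As both $A_0$ and $\Omega^{d+1}_A(A_0)$ (the latter coming from a minimal resolution) have no projective summands, this is an honest isomorphism of modules by Krull--Schmidt, so $\Omega^{d+1}_A(A_0)$ is semisimple.

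Next I would use that $A$ is of periodic type: by Theorem \ref{thm:selfinj dual} we have $\Omega^{q+1}_A(A_0)\cong A_0\langle p+q\rangle$, and iterating gives $\Omega^{m(q+1)}_A(A_0)\cong A_0\langle m(p+q)\rangle$ for every $m\ge 0$. Writing $d+1=m(q+1)+r$ with $0\le r\le q$, this reduces the problem to $\Omega^r_A(A_0)\cong A_0\langle -m(p+q)\rangle$; in particular $\Omega^r_A(A_0)$ would have to be semisimple. The goal then becomes to show that $\Omega^r_A(A_0)$ is semisimple only when $r=0$, which forces $(q+1)\mid(d+1)$.

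The core of the argument is a degree estimate. By Lemma \ref{lemma: kosz compl} (and the spliced minimal resolution described above it), for $0\le r\le q$ the $r$-th projective $P_r\cong A\otimes A_r^{!*}$ is generated in degree $r$, so $\Omega^r_A(A_0)$ is generated in degree $r$ and sits inside $P_{r-1}\cong A\otimes A_{r-1}^{!*}$, which is projective--injective with socle concentrated in degree $(r-1)+p$. For $1\le r\le q$ the module $\Omega^r_A(A_0)$ is non-zero (the simples are not projective, $A$ being non-semisimple), and since it is a non-zero submodule of the injective module $P_{r-1}$ its socle meets $\soc P_{r-1}$; hence $\Omega^r_A(A_0)$ is non-zero in degree $(r-1)+p$ as well as in its generating degree $r$. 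Because $p>2$, these two degrees are distinct, so $\Omega^r_A(A_0)$ is not concentrated in a single degree and therefore not semisimple. This contradicts the semisimplicity derived above, so $1\le r\le q$ is impossible, leaving $r=0$.

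The main obstacle I anticipate is the degree bookkeeping in the last step: one must pin down that the generators of $\Omega^r_A(A_0)$ lie in degree $r$ while its socle lies in degree $(r-1)+p$, and verify that minimality of the resolution of Lemma \ref{lemma: kosz compl} genuinely forces the generating degree to be $r$ for all $1\le r\le q$. This is precisely where the hypothesis $p>2$ enters, guaranteeing that the socle degree strictly exceeds the top degree so that the intermediate syzygies cannot collapse to a single semisimple layer.
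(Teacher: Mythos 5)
Your proof is correct and is essentially the paper's: the paper gives no argument of its own, deferring to \cite[Proposition 4.4]{yu}, whose proof is exactly this reduction --- the stably Calabi--Yau hypothesis forces $\Omega_A^{d+1}(A_0)\cong A_0$, the periodicity $\Omega_A^{q+1}(A_0)\cong A_0\langle p+q\rangle$ of Theorem \ref{thm:selfinj dual} reduces everything to the semisimplicity of $\Omega_A^{r}(A_0)$ for the remainder $r$, and the comparison of the generating degree $r$ with the socle degree $r-1+p$ in the minimal resolution of Lemma \ref{lemma: kosz compl} (using that $A$ is graded Frobenius, so $\soc$ of a projective generated in degree $g$ sits in degree $g+p$) rules out $1\le r\le q$. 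One minor remark: the inequality $r-1+p>r$ needs only $p>1$, so your closing claim that $p>2$ is ``precisely'' what makes the socle degree exceed the generating degree overstates its role, though this is harmless under the stated hypothesis.
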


\begin{proof}
See \cite[Proposition 4.4]{yu}.
\end{proof}

\begin{prop}
Let $A$ be a Frobenius $(p,q)$-Koszul algebra with $p>2$ and let $\eta$ and $\beta$ be Nakayama automorphisms of $A$ and $A^!$ respectively. Then $A$ is stably Calabi-Yau if and only if there exists a natural isomorphism $(\xi^{q+1}\bar{\beta})^*\cong \Omega_A^{-m}$ as functors in the stable module category, for some integer $m>0$. In this case $q+1$ divides $m$.
\end{prop}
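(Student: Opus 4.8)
The plan is to combine the twisted stably Calabi-Yau characterization from Corollary \ref{coroll: frob stabCY} with the divisibility constraint from Lemma \ref{lemma:cy alm kosz}, reducing the ``stably Calabi-Yau'' property to the vanishing of the twist up to a power of the cosyzygy. By Corollary \ref{coroll: frob stabCY} we already know that $A$ is $\varphi$-twisted stably Calabi-Yau of dimension $q$ with $\varphi=\xi^{q+1}\bar{\beta}$ (up to inner automorphism), which means $\Omega_A^{-1}\tau\simeq\Omega_A^{-q}\varphi^*$, equivalently $\nu\simeq\Omega_A^{-q-1}\varphi^*$ as stable functors. The key observation is that $A$ being stably Calabi-Yau in the classical sense (i.e. $\nu\simeq\Omega_A^{-d-1}$ for some $d>0$) should be equivalent to being able to \emph{absorb} the twist $\varphi^*$ into a power of $\Omega_A^{-1}$.

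First I would prove the ``if'' direction: suppose $(\xi^{q+1}\bar{\beta})^*\simeq\Omega_A^{-m}$ for some $m>0$. Substituting $\varphi^*\simeq\Omega_A^{-m}$ into the twisted Serre functor relation gives $\nu\simeq\Omega_A^{-q-1}\varphi^*\simeq\Omega_A^{-q-1-m}$, so $A$ is stably Calabi-Yau of dimension $d=q+m$. For the ``only if'' direction, assume $A$ is stably Calabi-Yau, so $\nu\simeq\Omega_A^{-d-1}$ for some $d>0$. Comparing with $\nu\simeq\Omega_A^{-q-1}\varphi^*$ from Corollary \ref{coroll: frob stabCY}, uniqueness of the Serre functor forces $\Omega_A^{-d-1}\simeq\Omega_A^{-q-1}\varphi^*$, and since $\Omega_A^{-1}$ is invertible on the stable category, composing with $\Omega_A^{q+1}$ yields $\varphi^*\simeq\Omega_A^{-(d-q)}$; here $m=d-q$, which is positive because $d+1$ is a strict multiple of $q+1$ by Lemma \ref{lemma:cy alm kosz} (using $p>2$), so $d\geq q$ and in fact $d>q$ unless $\varphi^*$ is already trivial. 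This establishes the stated equivalence with $m=d-q$.

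For the final divisibility claim, I would feed the resulting isomorphism $\nu\simeq\Omega_A^{-(q+m)-1}$ back through Lemma \ref{lemma:cy alm kosz}: since $A$ is stably Calabi-Yau of dimension $d=q+m$, the lemma (valid since $p>2$) gives that $q+1$ divides $d+1=q+m+1$, hence $q+1\mid m$. The main obstacle I anticipate is the careful bookkeeping with the autoequivalence $\varphi^*$: one must check that $\varphi^*$ genuinely commutes with $\Omega_A^{-1}$ on the stable category (which was recorded earlier, since twisting functors of the form ${}_\phi A\otimes_A-$ preserve projectives and commute with the cosyzygy), and that the comparison of two expressions for the Serre functor is legitimate, i.e.\ that uniqueness of the Serre functor up to natural isomorphism (cited via \cite[Lemma I.1.3]{reiten-VDBergh}) applies to cancel the ambiguity coming from the inner-automorphism indeterminacy in $\varphi$. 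Since inner automorphisms induce functors naturally isomorphic to the identity on the stable category, this indeterminacy does not affect the isomorphism class of $\varphi^*$, so the argument goes through cleanly.
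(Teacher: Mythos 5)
Your proof is correct and follows essentially the same route as the paper's: both directions reduce to comparing the twisted Serre functor relation $\nu\simeq\Omega_A^{-q-1}(\xi^{q+1}\bar{\beta})^*$ from Corollary \ref{coroll: frob stabCY} with $\nu\simeq\Omega_A^{-d-1}$, cancelling powers of $\Omega_A$ in the stable category, and invoking Lemma \ref{lemma:cy alm kosz} for the divisibility. The only cosmetic difference is the order of operations in the converse (you extract $m=d-q$ first and then apply the lemma, while the paper applies the lemma first and writes $m=(k-1)(q+1)$ directly), and your edge-case remark about $k=1$ is no worse than the paper's own treatment, which allows the same degeneracy.
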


\begin{proof}
By Corollary \ref{coroll: frob stabCY} $A$ is twisted stably Calabi-Yau of dimension $q$ with respect to the twist $\xi^{q+1}\bar{\beta}$, where $\beta$ is the Nakayama automorphism of $A^!$ and $\bar{\beta}$ is the automorphism of $A$ induced by $\beta$. 

Suppose that there is a natural isomorphism $(\xi^{q+1}\bar{\beta})^*\cong \Omega_A^{-m}$ for some $m>0$. Then we also have the following natural isomorphism:
\[ \nu\cong \Omega_A^{-q-1}\circ\Omega_A^{-m}\cong\Omega_A^{-q-1-m}, \]
therefore $A$ is stably Calabi-Yau of dimension $q+m$. Applying Lemma \ref{lemma:cy alm kosz} we have that $q+1$ divides $q+m+1$ and so $m=(k-1)(q+1)$ for some $k>0$.

Conversely assume that $A$ is stably Calabi-Yau of dimension $d$, so that we have a natural isomorphism $\nu\cong\Omega_A^{-d-1}$. The by Lemma \ref{lemma:cy alm kosz} there is some $k>0$ such that $d+1=k(q+1)$. Since $A$ is also twisted stably Calabi-Yau we obtain the following natural isomorphism:
\[ \Omega_A^{-(q+1)}\circ\Omega_A^{-(q+1)(k-1)}\cong \Omega_A^{-(q+1)}\circ(\xi^{q+1}\bar{\beta})^*. \]

Therefore, since $\Omega_A$ is an autoequivalence on the stable module category we obtain the desired isomorphism for $m=(k-1)(q+1)$.
\end{proof}

\section{Bimodule stably Calabi-Yau properties}\label{sec :bimod}

In this section we recall the definition of graded \emph{bimodule stably Calabi-Yau} algebras from \cite{AmOp} ad we introduce a twisted version of such property. Analogously to the twisted case this property implies that the stable category of \emph{Cohen-Macaulay} (one-sided) modules is Calabi-Yau. In particular we will study such properties for derivation algebras and for this pourpose we will need two different graded structures on our algebras. Therefore we start this section introducing these structures.

\subsection{\texorpdfstring{$\mathbb{Z}^2$}{double graded}-grading on derivation algebras} 

Assume that the semisimple algebra $S$ is graded and concentrated in degree $0$ and assume also that $V=\oplus_{i\geq 0}V_i$ is a positively graded $S$-$S$-bimodule with finite dimensional graded parts. Reacll that we denote by $S\lgr$ the category of left graded $S$-modules with morphisms in degree zero. The iterated tensor products $V^{\otimes j}$ also inherit the grading using distributivity of $\otimes$ and $\oplus$. Then the tensor algebra $T_S(V)$ has a $\mb{Z}^2$-grading:
\[T_S(V)=\bigoplus_{i,j\geq 0}T_S(V)_{i,j} \] 
in which the first part is the tensor grading and the second part is the grading obtained by the natural grading on $V^{\otimes j}$ induced by the grading on $V$. We call this grading the $V$-grading.
Under the choice of a (degree-zero) trace function $tr:S\to \Bbbk$, the graded versions of the dualities considered at the beginning of Subsection \ref{section: deriv alg} are again naturally isomorphic. Moreover the dual bimodule $(V^*)^{\otimes j}$ is negatively graded for any $j\geq 0$.

\begin{defin}\label{defin: cut}
We say that a superpotential $\omega$ in $T_S(V)$  \emph{admits a cut} if $\omega$ is homogeneous of $\mb{Z}^2$-degree $(d,1)$ for some $d>0$. That is, $\omega$ is homogeneous of degree $d$ with respect to the tensor grading (as in the classical Definition \ref{defin:superpotential}) and homogeneous of degree $1$ with respect to the $V$-grading.
\end{defin} 

\begin{rmk}
Our notation takes inspiration from the classical setting of quiver with potential (\cite{HerIy}) where a potential is a linear combination of cycles in the quiver $Q$ (up to cyclic permutation) and a \emph{cut} $C$ is a set of arrows such that each cycle in $Q$ contains precisely one arrow $\alpha\in C$.
\end{rmk} 

The evaluation maps $V^*\otimes V\to S$ and $V\otimes V^*\to S$ defining the pairing $[-]$ (recall the construction from Section \ref{section: deriv alg}) are zero on components $V^*_i\otimes V_j$ for $i\neq j$, therefore they are defined only on the homogeneous components of degree zero $V^*_i\otimes V_i$. The evaluation maps extend to a graded pairing of degree zero on the tensor product $(V^*)^{\otimes k}\otimes V^{\otimes l}\to V^{\otimes l-k}$ and $V^{\otimes l}\otimes(V^*)^{\otimes k} \to V^{\otimes l-k}$, analogously to the ungraded case. Recall that we have the following maps:
\[ \Delta_k^\omega:(V^{\otimes k})^*\otimes S_\varphi\to V^{\otimes d-k}, \quad \Delta_k^\omega(f\otimes s)=[f\omega s] \]
for any $0\leq k\leq d$.

\begin{lemma}\label{lemma:bigrad W}
If $\omega$ is a superpotential admitting a cut then $W_{d-k}=\im(\Delta_k^\omega)$ is a $\mb{Z}^2$-graded subspace of $V^{\otimes d-k}$ concentrated in $V$-degree $0$ and $1$.
\end{lemma}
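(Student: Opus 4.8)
The plan is to understand exactly how the $V$-grading interacts with the defining map $\Delta_k^\omega$ and then trace where the superpotential's bidegree $(d,1)$ forces the image $W_{d-k}$ to live. First I would fix notation: an element of $W_{d-k}$ has the form $[f\omega s]$ for $f\in (V^{\otimes k})^*$ and $s\in S_\varphi$. Since $S$ is concentrated in $V$-degree $0$, the factor $s$ contributes nothing to the $V$-grading, so I may ignore it and analyze $[f\omega]$. The key structural input is that the evaluation pairing $V^*\otimes V\to S$ is \emph{graded of degree zero}, as recalled in the paragraph preceding the lemma: it vanishes on $V^*_i\otimes V_j$ for $i\neq j$ and is nonzero only on the diagonal components $V^*_i\otimes V_i$. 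Hence the induced bracket $(V^{\otimes k})^*\otimes V^{\otimes d}\to V^{\otimes d-k}$ is homogeneous of $V$-degree equal to the difference $(V\text{-degree of }\omega)-(V\text{-degree of }f)$.

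Next I would make this precise by decomposing $f$ into its $V$-homogeneous components. The dual bimodule $(V^{\otimes k})^*$ is negatively graded (again noted in the running text), so write $f=\sum_j f_j$ with $f_j$ of $V$-degree $-j$, $j\geq 0$. Because $\omega$ is homogeneous of $V$-degree $1$, the bracket $[f_j\,\omega]$ has $V$-degree $1-j$. Now here is the decisive point: the pairing respects the grading, so $[f_j\,\omega]$ is \emph{nonzero only when} the contracted components of $f_j$ match diagonal components of $\omega$; since $\omega$ has total $V$-degree $1$, its $k$ contracted slots carry $V$-degree summing to at most $1$, forcing $j\in\{0,1\}$ for a nonzero contribution. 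Consequently every nonzero $[f\omega]$ has $V$-degree in $\{0,1\}$, which is exactly the claim that $W_{d-k}$ is concentrated in $V$-degrees $0$ and $1$. I would also remark that $W_{d-k}$ is a $\mb{Z}^2$-graded subspace because $\Delta_k^\omega$ is a morphism of $\mb{Z}^2$-graded spaces (degree zero in the tensor grading by construction, and graded in the $V$-grading by the above), so its image is a graded subspace of $V^{\otimes d-k}$.

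The main obstacle I anticipate is purely bookkeeping rather than conceptual: making rigorous the statement that contracting $k$ dual slots against $\omega$ can lower the $V$-degree by at most the $V$-degree of $\omega$, and by the $V$-degree of the annihilated part of $f$ only when those degrees coincide slot-by-slot. This requires being careful that the extended bracket on tensor powers is genuinely graded of degree zero, which follows by iterating the degree-zero property of the single evaluation map $V^*_i\otimes V_i\to S$ through the inductive definition of the pairing; I would state this as an elementary consequence of the graded setup established just before the lemma and not belabor the multilinear indexing. The remaining verification, that the bound $j\le 1$ is sharp so that both $V$-degrees $0$ and $1$ can actually occur, is immediate from the homogeneity of $\omega$ and need only be observed, not computed.
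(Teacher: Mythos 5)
Your proof is correct and takes essentially the same route as the paper's: the paper likewise observes that $\Delta_k^\omega$ is a graded map (of degree $(d,1)$), so its image is a $\mb{Z}^2$-graded subspace, and that evaluating $k$ slots of the $V$-degree-one superpotential forces the image into $V$-degrees at most $1$ (hence $0$ or $1$ by positivity of the grading). Your decomposition of $f$ into homogeneous components $f_j$ of degree $-j$ merely makes explicit the degree bookkeeping that the paper leaves implicit.
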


\begin{proof}
Suppose that $\omega$ is homogenous of degree $(d,1)$. The maps 
\[ \Delta_k^\omega:(V^{\otimes k})^*\otimes S_\varphi\to V^{\otimes d-k}, \quad \Delta_k^\omega(f\otimes s)=[f\omega s] \]
are graded of degree $(d,1)$ hence the image $W_{d-k}=\im(\Delta_k^\omega)$ is a $\mb{Z}^2$-graded subspace of $V^{\otimes d-k}$. Moreover, since elements of $W_{d-k}$ are obtained by evaluating $k$ components of the linear terms of $\omega$, they are of $V$-degree at most $1$. \end{proof}

Note that $W_i$ is homogeneous of degree $i$ with respect to the first grading but it is not homogeneous with respect to the $V$-grading in general. Nevertheless we have a well defined $\mb{Z}^2$-grading on the derivation algebra $A=\mc{D}_\varphi(\omega, d-k)=T_S(V)/\langle W_k\rangle$ for any graded automorphism $\varphi$, with bigraded decomposition $A=\bigoplus_{i,j\geq 0}A_{i,j}$. 
\begin{notation}
From now on we will assume that $\omega$ admits a cut and w will consider this $\mb{Z}^2$-graded structure on $A$.\end{notation}

\begin{lemma}\label{lemma: bigraded complex}
The complex $\mc{W}^\bullet$ associated to $A$ is a complex of $\mb{Z}^2$-graded projective bimodules $A\otimes W_i\otimes A$ which are generated in degree $(i,0)$ and $(i,1)$, with differentials in degree $(0,0)$.

Moreover there are graded isomorphisms $(W_i)^*\cong(W_{d-i})_\varphi\langle -d,-1\rangle$ inducing a graded duality of $A$-bimodules $\Hom_{A^e\lgr}(A\otimes W_{i}\otimes A,A^e) \cong A\otimes W_{d-i}\otimes A_\varphi\langle -d,-1\rangle$ such that $\Hom_{A^e\lgr}(D_i,A^e)= D_{d+1-i}\langle -d,-1\rangle$.
\end{lemma}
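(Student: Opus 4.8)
The plan is to upgrade the ungraded self-duality of Lemma \ref{lemma:selfdual complex} by keeping track of the second ($V$-)grading, the only genuinely new ingredient being the homogeneity of $\omega$ in $V$-degree $1$ together with Lemma \ref{lemma:bigrad W}.

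For the first assertion I would invoke Lemma \ref{lemma:bigrad W}: since $\omega$ admits a cut, each $W_i$ is a $\mb{Z}^2$-graded subspace of $V^{\otimes i}$ concentrated in tensor-degree $i$ and in $V$-degrees $0$ and $1$. Hence the free bimodule $A\otimes W_i\otimes A$ is generated by $1\otimes W_i\otimes 1$, which sits in bidegrees $(i,0)$ and $(i,1)$, giving the generation claim. For the differentials, note that $D_i(a\otimes w_1\cdots w_i\otimes b)$ merely reassociates the factor $w_1$ (resp. $w_i$) into the adjacent copy of $A$ inside the ambient bigraded algebra $T_S(V)$; since multiplication in $T_S(V)$ is homogeneous of bidegree $(0,0)$ and the scalars $\epsilon_i,(-1)^i$ do not affect degree, each $D_i$ preserves the total $\mb{Z}^2$-degree and is therefore a morphism in degree $(0,0)$.

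For the duality I would refine the perfect pairing recalled in the Remark after Lemma \ref{lemma:selfdual complex},
\[ [\,,\,]\colon (W_{d-i})^*\otimes (W_i)^*\to S,\qquad [f,g]:=[(f\otimes g)\omega]. \]
Because $\omega$ is homogeneous of bidegree $(d,1)$ and the evaluation brackets are homogeneous of bidegree $(0,0)$ (they vanish off the diagonal components $V_j^*\otimes V_j$), this pairing forces $\deg f+\deg g=(-d,-1)$ whenever $[f,g]\neq 0$, the target $S$ being concentrated in bidegree $(0,0)$. Dualizing, the tensor-degree matching $-i+\bigl(-(d-i)\bigr)=-d$ recovers the first shift, while the $V$-degree matching, using that $W_i$ and hence $(W_i)^*$ lives only in $V$-degrees $0,-1$, forces the second shift $-1$; indeed $W_{d-i}\langle -d,-1\rangle$ occupies bidegrees $(-i,-1),(-i,0)$, exactly those of $(W_i)^*$. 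Combining this with the non-degeneracy already established in Lemma \ref{lemma:selfdual complex}, which restricts to a perfect pairing on each homogeneous component, I obtain the graded isomorphism $(W_i)^*\cong (W_{d-i})_\varphi\langle -d,-1\rangle$, the twist by the bigraded automorphism $\varphi$ being compatible with both gradings.

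Finally, tensoring this isomorphism with $A$ on both sides yields the bimodule duality $\Hom_{A^e\lgr}(A\otimes W_i\otimes A,A^e)\cong A\otimes W_{d-i}\otimes A_\varphi\langle -d,-1\rangle$, and the identity $\Hom_{A^e\lgr}(D_i,A^e)=D_{d+1-i}\langle -d,-1\rangle$ is precisely the ungraded statement of Lemma \ref{lemma:selfdual complex} reread in the bigraded category, since the underlying maps are unchanged and have now been shown to be homogeneous of bidegree $(0,0)$. I expect the only real obstacle to be the degree bookkeeping in the $V$-grading, namely checking that the single shift $\langle -d,-1\rangle$ simultaneously accounts for both gradings and that the pairing stays perfect componentwise; this reduces, via Lemma \ref{lemma:bigrad W}, to the already-known ungraded non-degeneracy.
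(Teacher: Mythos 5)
Your proof is correct and takes essentially the same route as the paper: the first claim is deduced from Lemma \ref{lemma:bigrad W}, and the shift $\langle -d,-1\rangle$ is obtained by observing that the perfect pairing $[f,g]=[(f\otimes g)\omega]$ from the remark after Lemma \ref{lemma:selfdual complex} is homogeneous of degree $|\omega|=(d,1)$, so the induced duality $(W_i)^*\cong (W_{d-i})_\varphi$ becomes a degree-zero isomorphism only after that shift. Your additional bookkeeping (e.g.\ that $(W_i)^*$ and $W_{d-i}\langle -d,-1\rangle$ both sit in bidegrees $(-i,0)$ and $(-i,-1)$, and that the differentials are homogeneous of bidegree $(0,0)$) merely spells out details the paper leaves implicit.
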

\begin{proof}
The first statement is a consequence of Lemma \ref{lemma:bigrad W}.

The second statement is the graded version of Lemma \ref{lemma:selfdual complex} and the degree of the differentials is clearly zero. The degree shift of $\langle -d,-1\rangle$ comes from the fact that the duality $\Phi:(W_i)^*\xrightarrow{\simeq}(W_{d-i})_\varphi$ induced by the pairing
\[ [\phantom{a} ,\phantom{i} ] :(W_{d-i})^*\otimes (W_i)^*\to S,\quad [ f,g]:=[(f\otimes g)\omega] \]
has degree $|\omega|=(d,1)$ so it becomes an isomorphism in degree zero into the shifted module $(W_{d-i})_\varphi\langle -d, -1\rangle$.
\end{proof}

\begin{prop}\label{prop: kosz deg zero alg}
Let $A=\mc{D}_\varphi(\omega,d-2)$ be a $\mb{Z}^2$-graded algebra such that the complex $\mc{W}^\bullet$ is exact except in degree zero, where $H^0(\mc{W}^\bullet)\cong A$, and in degree $d$. Let $\Lambda$ be the $V$-degree zero subalgebra of $A$. Then $\Lambda$ is Koszul and $\gldim\Lambda\leq d-1$.
\end{prop}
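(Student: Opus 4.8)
The plan is to restrict the one-sided complex $\mc{W}^\bullet\otimes_A S$ from (\ref{eq:Wleft}) to its $V$-degree-zero homogeneous component and to show that this component is a minimal linear projective resolution of $S$ over $\Lambda$. First I would record two consequences of the cut. By Lemma \ref{lemma:bigrad W} each $W_m$ is concentrated in $V$-degrees $0$ and $1$, so I may write $W_m=(W_m)_0\oplus(W_m)_1$ with $(W_m)_0\subseteq V_0^{\otimes m}$; and since $W_d=\im\Delta_0^\omega=S\omega S$ while $\omega$ has $V$-degree $1$, the top space satisfies $(W_d)_0=0$. Moreover, because $\mc{W}^\bullet$ is a complex with the stated differential, stripping the first tensor factor of an element of $W_m$ lands in $W_{m-1}$, that is $W_m\subseteq V\otimes W_{m-1}$; intersecting with $V$-degree $0$ yields $(W_m)_0\subseteq V_0\otimes(W_{m-1})_0$.

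Next I would pass to $\mc{W}^\bullet\otimes_A S$ and take its $V$-degree-zero part $C^\bullet$. Since the $V$-degree-zero subalgebra is $\Lambda=A_{-,0}$ and $S$ sits in $V$-degree $0$, the $V$-degree-zero component of the projective module $A\otimes_S W_m$ is exactly $C^m:=\Lambda\otimes_S(W_m)_0$, a projective left $\Lambda$-module generated in degree $m$. The differential $d_m(a\otimes w_1\ldots w_m)=\epsilon_m(aw_1\otimes w_2\ldots w_m)$ preserves $V$-degree and, by the inclusion $(W_m)_0\subseteq V_0\otimes(W_{m-1})_0$ together with $V_0=\Lambda_1$, restricts to a $\Lambda$-linear map $C^m\to C^{m-1}$ whose image lies in $\rad\Lambda\cdot C^{m-1}$ (because $w_1\in V_0=\Lambda_1\subseteq\rad\Lambda$). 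Thus $C^\bullet$ is minimal, and since $(W_d)_0=0$ forces $C^d=0$, it has length $d-1$.

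Finally I would transport the exactness. By hypothesis $\mc{W}^\bullet$ has $H^0\cong A$ and is exact at $A\otimes W_j\otimes A$ for $1\le j\le d-1$, so Lemma \ref{lemma:one sided complexes} gives that $\mc{W}^\bullet\otimes_A S$ has $H^0\cong S$ and is exact at $A\otimes W_j$ for $1\le j\le d-1$. Taking the $V$-degree-zero part is exact, being a direct summand of complexes graded by $V$-degree, so $H^m(C^\bullet)=H^m(\mc{W}^\bullet\otimes_A S)_0$ equals $S$ for $m=0$, vanishes for $1\le m\le d-1$, and vanishes at $m=d$ because $C^d=0$. Hence
\[ 0\to\Lambda\otimes_S(W_{d-1})_0\to\cdots\to\Lambda\otimes_S(W_0)_0\to S\to 0 \]
is a minimal graded projective resolution of $S=\Lambda_0$ whose $m$-th term is generated in degree $m$. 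This shows $\Lambda$ is Koszul, and $\pdim_\Lambda S\le d-1$ gives $\gldim\Lambda\le d-1$.

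The main obstacle I anticipate is conceptual rather than computational: one must see that the cut truncates exactly the pathology of $\mc{W}^\bullet$. The complex $\mc{W}^\bullet$ fails to be exact in degree $d$, but this failure lives entirely in positive $V$-degree (the relevant kernel is contained in $(A\otimes W_d)_{\geq 1}$, as $(A\otimes W_d)_0=0$), so it becomes invisible after restricting to $V$-degree $0$; the identity $(W_d)_0=0$ is precisely what makes the restricted complex both finite of length $d-1$ and a genuine resolution. The remaining care lies in checking that the $V$-degree-zero restriction really carries the $\Lambda$-module structure and that its differential does not leave the degree-zero subalgebra, both of which reduce to the inclusion $(W_m)_0\subseteq V_0\otimes(W_{m-1})_0$ and the identification $V_0=\Lambda_1$.
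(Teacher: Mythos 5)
Your proposal is correct and follows essentially the same route as the paper: both pass to the one-sided complex $\mc{W}^\bullet\otimes_A S$ via Lemma \ref{lemma:one sided complexes}, use the cut to see that $A\otimes W_d$ is generated in $V$-degree $1$ (so its $V$-degree-zero part vanishes), and take the $V$-degree-zero component to obtain a linear graded projective resolution of ${}_\Lambda S$ of length $d-1$. Your additional checks --- the inclusion $(W_m)_0\subseteq V_0\otimes(W_{m-1})_0$ and minimality via $\rad\Lambda$ --- are sound refinements of details the paper leaves implicit, not a different argument.
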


\begin{proof}
By assumption we have the following $\mb{Z}^2$-graded exact complex
\begin{equation} \label{eq: compl 2}
A\otimes W_{d}\otimes A \xrightarrow{D_{d}} \cdots \xrightarrow{D_{2}}A\otimes W_{1}\otimes A  \xrightarrow{D_1} A\otimes W_0 \otimes A\to A \to 0, 
\end{equation}
with differentials of degree $(0,0)$ and where the projective bimodules $A\otimes W_i\otimes A$ generated in degree $(i,0)$ and $(i,1)$. In particular the bimodule $A\otimes W_d\otimes A$ is generated in degree $(d,1)$ since $W_d=\langle \omega \rangle$ and $\omega$ admits a cut. By Lemma \ref{lemma:one sided complexes}, applying the functor $-\otimes S$ to the complex (\ref{eq: compl 2}) we get an exact complex of $\mb{Z}^2$-graded left projective modules generated in degrees as before:
\begin{equation}\label{eq: compl 1} 
 A\otimes W_{d} \xrightarrow{d_{d}} \cdots \xrightarrow{d_{2}}A\otimes W_{1}  \xrightarrow{d_1} A \to S \to 0.
 \end{equation}
 
To simplify the notation let $P_i:=A\otimes W_i$. Since $\Lambda$ is the $V$-degree zero subalgebra of $A$, the $V$-degree zero part of $P_i$, that we denote by $(P_i)_0$, is a projective left $\Lambda$-module and the degree zero part of $\Lambda$ with respect to the tensor degree coincides with $S$ viewed as a $\Lambda$-module. Note moreover that $(P_d)_0=0$ since $P_d=A\otimes W_d$ is generated in $V$-degree $1$. Then, taking the $V$-degree zero part of complex (\ref{eq: compl 1}), we get an exact sequence:
\[ 0\to(P_{d-1})_0 \to \cdots \to (P_2)_0 \xrightarrow{\overline{d}_2} (P_1)_0\ \xrightarrow{\overline{d}_1} (P_0)_0\to {}_\Lambda S\to 0,\]
 of projective graded left $\Lambda$-modules (hence they are all concentrated in $V$-degree zero), each of which is generated in tensor degree $i$ and where the maps $\overline{d}_i$ are are morphisms in tensor degree zero. This gives a graded projective resolution of the (tensor) degree zero subalgebra of $\Lambda$, ${}_\Lambda S$, in which each projective module $(P_i)_0$ is generated in degree $i$ and so $\Lambda$ is Koszul of global dimension at most $d-1$.
\end{proof}

\subsection{Graded Cohen-Macaulay modules}

We recall the definitions and basic facts about Gorenstein algebras and Cohen-Macaulay modules.

\begin{defin}\label{defin:gorenst alg}
A finite dimensional algebra $A$ is said to be \emph{Gorenstein} it its injective dimension $\idim A$ and the projective dimension of its dual $\pdim A^*$ are both finite. If this is the case the natural number $\idim A=\pdim A^*$ is called the \emph{Gorenstein dimension} of $A$.
\end{defin}

\begin{defin}\label{defin:cm modules}
Let $A$ be a finite dimensional algebra. The category of \emph{(maximal) Cohen-Macaulay} $A$-modules is the full subcategory of $A\lmod$ with objects:
\[ \CM(A):=\left\lbrace M\in A\lmod \mid \ext_A^i(X,A)=0 \text{ for } i>0 \right\rbrace. \]

If $A$ is a graded algebra we denote by
\[ \CM\lgr(A):=\left\lbrace M\in A\lgr \mid \ext_A^i(X,A)=0 \text{ for } i>0 \right\rbrace \]
the category of graded Cohen-Macaulay left $A$-modules.
\end{defin}

The following result is a graded version of a well known result (see e.g. \cite{rickard1989derived}).

\begin{thm}\label{thm:cm stab}
If $A$ is a graded Gorenstein algebra, then $\CM\lgr(A)$ is a Frobenius category and there is a triangle equivalence
\[ \mc{D}^b(A\lgr)/\perf(A\lgr) \xrightarrow{\simeq} \underline{\CM\lgr}(A).  \]
\end{thm}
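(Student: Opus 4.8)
The plan is to recognize this statement as the graded incarnation of the Buchweitz--Happel description of the singularity category of a Gorenstein algebra, and to transport the classical (ungraded) argument of \cite{rickard1989derived} to $A\lgr$. The proof splits into two essentially independent parts: first establishing that $\CM\lgr(A)$ is a Frobenius exact category, and then producing the triangle equivalence with the Verdier quotient $\mc{D}^b(A\lgr)/\perf(A\lgr)$. Throughout, the only difference from the ungraded case is the bookkeeping of internal degrees, which does not affect any of the homological vanishing arguments.

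For the first part I would equip $\CM\lgr(A)$ with the exact structure inherited from $A\lgr$, where the conflations are the short exact sequences of graded modules all of whose terms lie in $\CM\lgr(A)$. The projective objects of this exact category are exactly the graded projective $A$-modules, and so are the injective objects. Indeed, a genuine projective $P$ is visibly a projective object, and it is also an injective object because $\ext^1_{A\lgr}(Y,P)=0$ for every $Y\in\CM\lgr(A)$ (as $P$ is a graded summand of a free module and $\ext^{>0}(Y,A)=0$ by definition of $\CM$). There are enough projectives: for $M\in\CM\lgr(A)$ the syzygy in a conflation $0\to\Omega M\to P\to M\to 0$ again lies in $\CM\lgr(A)$, since $\ext^i(\Omega M,A)\cong\ext^{i+1}(M,A)=0$ for $i\geq 1$. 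Dually there are enough injectives, obtained by applying the same construction to the opposite algebra and transporting along the duality $\Hom_{A\lgr}(-,A)$, which---precisely because $A$ is Gorenstein---restricts to a duality between graded left and graded right Cohen--Macaulay modules. Splitting off the (co)syzygy then forces every projective or injective object to be a genuine projective, so the two classes coincide and $\CM\lgr(A)$ is Frobenius. By the general theory of Frobenius categories its stable category $\underline{\CM\lgr}(A)$ is triangulated, with suspension the graded cosyzygy $\Omega^{-1}$ and triangles induced by the conflations.

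For the second part I would define the comparison functor as the composite $\CM\lgr(A)\hookrightarrow\mc{D}^b(A\lgr)\to\mc{D}^b(A\lgr)/\perf(A\lgr)$ sending a module to the corresponding stalk complex. Since projective modules become perfect complexes, this kills the projective objects and hence factors through $\underline{\CM\lgr}(A)$; checking that it is a triangle functor amounts to matching the conflation-triangles of the Frobenius structure with the standard triangles of the quotient, which is routine. Writing $\mc{D}_{\mathrm{sg}}(A\lgr):=\mc{D}^b(A\lgr)/\perf(A\lgr)$, full faithfulness reduces to the natural identification
\[ \Hom_{\mc{D}_{\mathrm{sg}}(A\lgr)}(M,N[i])\cong\pshom_{A\lgr}(M,\Omega^{-i}N)\quad\text{for }M,N\in\CM\lgr(A), \]
which one proves by using the vanishing $\ext^{>0}(M,A)=0$ to replace morphisms in the quotient by genuine chain maps modulo those factoring through projectives.

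The main obstacle, and the only place where the Gorenstein hypothesis is used decisively, is essential surjectivity. Here I would show that every bounded complex $X^\bullet$ is isomorphic in $\mc{D}_{\mathrm{sg}}(A\lgr)$ to a shift of a single Cohen--Macaulay module: representing $X^\bullet$ by a bounded-above complex of projectives with bounded cohomology, a stupid truncation sufficiently far to the left differs from $X^\bullet$ only by a perfect complex, and the finiteness of $\idim A=\pdim A^*$ guarantees that a high enough syzygy satisfies $\ext^{>0}(-,A)=0$ and is therefore Cohen--Macaulay. Thus every object of the quotient comes from $\CM\lgr(A)$, and combined with full faithfulness this yields the desired triangle equivalence.
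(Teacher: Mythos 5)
Your proof is correct and follows essentially the same route as the paper, which offers no argument of its own beyond calling the statement ``a graded version of a well known result'' and citing \cite{rickard1989derived}: you have simply written out the classical Buchweitz--Happel--Rickard argument (Frobenius exact structure on $\CM\lgr(A)$ via syzygies and the Gorenstein duality $\Hom_{A\lgr}(-,A)$, then the stalk-complex functor, full faithfulness from $\ext^{>0}(M,A)=0$, and essential surjectivity from $\idim A<\infty$ forcing high syzygies to be Cohen--Macaulay) with the internal grading carried along, which is exactly how the cited result transports to $A\lgr$. Nothing in your sketch fails in the graded setting, so it matches the paper's intended proof.
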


\begin{lemma}\label{lemma: CM prop}
Let $A$ be a graded Gorenstein algebra of dimension $g$ and $M\in A\lgr$. Then the following hold:
\begin{enumerate}
\item $\Omega_A^g(M)$ is Cohen-Macaulay.
\item There exists a short exact sequance 
\[0\to K\to {}_{\CM}M\to M\to 0 \]
such that $\pdim K<\infty$ and ${}_{\CM}M\in\CM\lgr(A)$. The module ${}_{\CM}M$ is called a ``Cohen-Macaulay replacement" of $M$ and can be chosen to only have projective direct summands that are generated in degree zero. 
\end{enumerate}
\end{lemma}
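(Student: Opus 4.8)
The plan is to treat the two parts separately, deriving (1) directly from the Gorenstein hypothesis and obtaining (2) as a Cohen--Macaulay approximation built on top of (1).

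For part (1), I would argue as follows. Since $A$ is Gorenstein of dimension $g$, by definition $\idim {}_A A = g$, so $\ext^i_{A\lgr}(X,A)=0$ for every $X\in A\lgr$ and every $i>g$. Applying this to a high syzygy of $M$ and using dimension shifting along the syzygy sequences $0\to\Omega_A^{j+1}M\to P_j\to\Omega_A^j M\to 0$ (with $P_j$ projective), one obtains natural isomorphisms $\ext^i_{A\lgr}(\Omega_A^g M,A)\cong\ext^{i+g}_{A\lgr}(M,A)$ for all $i\geq 1$. The right-hand side vanishes because $i+g>g$, hence $\ext^i_{A\lgr}(\Omega_A^g M,A)=0$ for all $i>0$, i.e. $\Omega_A^g M\in\CM\lgr(A)$. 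This part is routine.

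For part (2), I would construct the Cohen--Macaulay replacement by ``climbing back up'' the syzygy sequences of $M$, starting from $N:=\Omega_A^g M$, which is Cohen--Macaulay by part (1). The essential structural input is Theorem \ref{thm:cm stab}: it makes $\CM\lgr(A)$ into a Frobenius category whose projective--injective objects are exactly the projective $A$-modules, so every $X\in\CM\lgr(A)$ admits a cosyzygy sequence $0\to X\to P\to\Omega_A^{-1}X\to 0$ with $P$ projective and $\Omega_A^{-1}X$ again Cohen--Macaulay. I would then define inductively $E_g:=N$ and $E_{i-1}:=\Omega_A^{-1}E_i$, together with comparison maps $\epsilon_i\colon E_i\to\Omega_A^i M$, beginning with $\epsilon_g=\Id$. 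At each step $\epsilon_{i-1}$ is produced by a lifting argument: the obstruction to extending the composite $E_i\xrightarrow{\epsilon_i}\Omega_A^i M\hookrightarrow P_{i-1}$ along $E_i\hookrightarrow P$ lies in $\ext^1_{A\lgr}(\Omega_A^{-1}E_i,P_{i-1})$, and this vanishes since $\Omega_A^{-1}E_i$ is Cohen--Macaulay, so $\ext^{>0}_{A\lgr}(\Omega_A^{-1}E_i,A)=0$, and $P_{i-1}$ is a direct summand of a free module. The lift gives a morphism of short exact sequences and hence an induced map $\epsilon_{i-1}\colon E_{i-1}\to\Omega_A^{i-1}M$; after $g$ steps I set ${}_{\CM}M:=E_0$ and take $\epsilon_0$ as the map onto $M$.

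The hard part will be turning this diagram into an honest short exact sequence $0\to K\to{}_{\CM}M\to M\to 0$ with $\pdim K<\infty$. For this I would pass to $\mc{D}^b(A\lgr)/\perf(A\lgr)$, where by Theorem \ref{thm:cm stab} the projective terms $P_i$ and $P$ become zero. Each morphism of short exact sequences above then becomes a morphism of triangles identifying $\epsilon_{i-1}$ with $\epsilon_i$ up to the shift, so that $\epsilon_i$ being an isomorphism in the quotient forces $\epsilon_{i-1}$ to be one as well; since $\epsilon_g=\Id$, the final map $\epsilon_0$ is an isomorphism in $\mc{D}^b(A\lgr)/\perf(A\lgr)$ and therefore has cone in $\perf(A\lgr)$. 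Replacing ${}_{\CM}M$ by ${}_{\CM}M\oplus P(M)$, with $P(M)\to M$ a projective cover, makes $\epsilon_0$ surjective without leaving $\CM\lgr(A)$ or changing the class of its cone; the cone is then $K[1]$ for $K=\Ker\epsilon_0$, and perfection of the cone yields $\pdim K<\infty$. Finally, to arrange that the projective summands of ${}_{\CM}M$ are generated in degree zero, I would observe that adding or deleting a projective module generated in degree zero changes neither membership in $\CM\lgr(A)$ nor the finiteness of $\pdim K$, so a suitable representative can be chosen; this last normalization is pure bookkeeping once the approximation sequence exists.
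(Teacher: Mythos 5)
Your proof of part (1) is correct, and the core of your construction in part (2) is sound: taking cosyzygies of $\Omega_A^g M$ inside the Frobenius category $\CM\lgr(A)$, producing the comparison maps $\epsilon_i$ by lifting against $\ext^1_{A\lgr}(E_{i-1},P_{i-1})=0$, and then detecting the error term in $\mc{D}^b(A\lgr)/\perf(A\lgr)$, where the morphisms of short exact sequences become morphisms of triangles, the projective terms vanish, and $\epsilon_{i-1}$ is identified with $\epsilon_i[1]$, so that $\epsilon_0$ becomes an isomorphism in the quotient and its cone is perfect. The repair of surjectivity by adding a projective cover and the deduction $\pdim K<\infty$ from perfection of the cone are also fine. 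For comparison: the paper gives no proof at all here — it simply cites Lemmas 2.5 and 2.6 and Remark 2.7 of \cite{AmOp} — so your argument supplies the standard Auslander--Buchweitz-type reasoning that the paper outsources to that reference.

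The genuine gap is the final normalization, which you dismiss as bookkeeping. The clause in the lemma requires that ${}_{\CM}M$ have no projective direct summands generated in degrees \emph{other than} zero, and the operation you justify — adding or deleting projectives already generated in degree zero — is not the one needed; what must be shown is that summands generated in nonzero degrees can be \emph{removed}. Moreover this is simply false for arbitrary $M\in A\lgr$: take $A$ selfinjective (Gorenstein of dimension $0$) and $M=A\langle j\rangle$ with $j\neq 0$. Then finite projective dimension means projective, so any sequence $0\to K\to{}_{\CM}M\to M\to 0$ with $\pdim K<\infty$ splits because $M$ is projective, forcing ${}_{\CM}M\cong K\oplus A\langle j\rangle$ to contain a projective summand generated in degree $j\neq 0$. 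So the normalization needs a hypothesis — in \cite{AmOp}, and in the applications in this paper, $M$ is generated in degree $0$ — together with an actual argument, for instance: if ${}_{\CM}M=X\oplus Q$ with $Q$ projective generated in positive degrees and $\phi\colon X\oplus Q\to M$ the approximation, then $\phi(Q)\subseteq M_{\geq 1}=\rad M$, so $\phi|_X$ is still surjective by Nakayama's lemma; lifting $\phi|_Q$ through $\phi|_X$ gives an automorphism of $X\oplus Q$ after which $Q$ splits off into the kernel, replacing $K$ by $K\oplus Q$ without affecting $\pdim K<\infty$. One must also rule out, or treat similarly, summands generated in negative degrees, which your cosyzygy construction does not obviously avoid, since injective envelopes in the graded category may be generated in negative degrees. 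None of this is hard, but it is a real step with real hypotheses, and your proposed justification does not cover it.
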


\begin{proof}
Statements number (1) and (2) are Lemmas 2.5 and 2.6 of \cite{AmOp} respectively (see also Remark 2.7 in the same article).
\end{proof}

\subsection{Bimodule stably Calabi-Yau algebras}

In this section we prove some sufficient conditions for a twisted quadratic derivation algebra $A=\mc{D}(\omega,|\omega|-2)$ to be \emph{bimodule stably Calabi-Yau}, following the definition given by Amiot and Oppermann in \cite{AmOp}.

\begin{defin}\label{defin: biodm st CY}
An algebra $A$ is called \emph{bimodule stably} $\varphi$-\emph{twisted d-Calabi-Yau} if there is an isomorphism
\[\mathbf{R}\Hom_{A^e}(A,A^e)[d+1]\cong A_\varphi  \text{ in }\underline{\CM}(A^e),\]
for some automorphism $\varphi$ of $A$.

Assume $A$ is a graded algebra. Then we say that $A$ is \emph{bimodule stably} $\varphi$-\emph{twisted d-Calabi-Yau} if there is an isomorphism
\[\mathbf{R}\Hom_{A^e}(A,A^e)[d+1]\cong A_\varphi\langle -p\rangle  \text{ in }\underline{\CM\lgr}(A^e),\]
for some natural number $p$ called the \emph{Gorenstein parameter}.
\end{defin}

\begin{prop}\label{prop: bimod stronger}
Let $A$ be a finite dimensional Gorenstein algebra. If $A$ is bimodule stably twisted $d$-Calabi-Yau then $A$ is stably twisted $d$-Calabi-Yau.
\end{prop}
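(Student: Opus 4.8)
The plan is to show that the bimodule hypothesis forces the Serre functor of the triangulated category $\underline{\CM}(A)$ to be the twisted shift $\Omega_A^{-d}\varphi^*$, which is exactly the Cohen--Macaulay analogue of Definition \ref{defin:tw st cy}. Recall from Theorem \ref{thm:cm stab} that, since $A$ is Gorenstein, both $\underline{\CM}(A)$ and $\underline{\CM}(A^e)$ are triangulated with suspension $\Omega^{-1}$ (note $A^e$ is again Gorenstein, being a tensor product of Gorenstein algebras). Over a Gorenstein algebra the dualizing bimodule $A^*$ is invertible with respect to $\otimes^{\mathbf{L}}_A$; write $\Theta:=\mathbf{R}\Hom_{A^e}(A,A^e)$ for its inverse. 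The Nakayama functor $\nu=A^*\otimes^{\mathbf{L}}_A-$ then restricts to a triangle autoequivalence of $\underline{\CM}(A)$ whose quasi-inverse is computed by this inverse dualizing complex, namely $\nu^{-1}\cong\Theta\otimes^{\mathbf{L}}_A-$.

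First I would record the shape of the Serre functor: by Auslander--Reiten--Serre duality in $\underline{\CM}(A)$ we have $\mathbb{S}\cong\Omega^{-1}\tau$ with $\tau\cong\nu\Omega^2$, so that $\mathbb{S}\cong\nu\Omega$ (this specialises to the formula of Section \ref{sect:stab CY} when $A$ is selfinjective and $\CM(A)=A\lmod$). It therefore suffices to identify $\nu$, and for this I would feed the hypothesis into the tensor functor. The isomorphism $\Theta[d+1]\cong A_\varphi$ in $\underline{\CM}(A^e)$ reads $\Theta\cong A_\varphi[-(d+1)]$, and applying the triangle functor $-\otimes^{\mathbf{L}}_A M$ for $M\in\CM(A)$ gives, for every such $M$,
\[ \nu^{-1}(M)\;\cong\;\Theta\otimes^{\mathbf{L}}_A M\;\cong\;(A_\varphi\otimes_A M)[-(d+1)]\;\cong\;\Omega^{d+1}\bigl({}_{\varphi^{-1}}M\bigr)\quad\text{in }\underline{\CM}(A), \]
where I use $A_\varphi\cong{}_{\varphi^{-1}}A$ and ${}_{\varphi^{-1}}A\otimes_A M={}_{\varphi^{-1}}M=(\varphi^{-1})^*M$ from the conventions of Section \ref{section: deriv alg}, together with $[-(d+1)]=\Omega^{d+1}$. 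Since twisting commutes with the (co)syzygy, inverting yields $\nu\cong\Omega^{-(d+1)}\varphi^*$, whence $\mathbb{S}\cong\nu\Omega\cong\Omega^{-d}\varphi^*$. Unwinding Serre duality this says
\[ \underline{\Hom}_A(M,N)\;\cong\;\underline{\Hom}_A\bigl(N,\Omega^{-d}\varphi^*M\bigr)^* \]
naturally in $M,N\in\underline{\CM}(A)$, i.e. $A$ is stably $\varphi$-twisted $d$-Calabi--Yau.

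The main obstacle is the bookkeeping around the bifunctor $-\otimes^{\mathbf{L}}_A-\colon\underline{\CM}(A^e)\times\underline{\CM}(A)\to\underline{\CM}(A)$: one must verify that it is well defined on the stable categories (using Cohen--Macaulay replacements, Lemma \ref{lemma: CM prop}, to absorb the perfect summands produced by tensoring) and triangulated in each variable, so that the stable bimodule isomorphism and the shift $[-(d+1)]$ transport correctly to $\underline{\CM}(A)$. The identification $\nu^{-1}\cong\Theta\otimes^{\mathbf{L}}_A-$, which is where Gorensteinness is genuinely used (invertibility of $A^*$), and the Auslander--Reiten--Serre formula $\mathbb{S}\cong\Omega^{-1}\tau$ in $\underline{\CM}(A)$ are standard in this setting and may be cited from \cite{AmOp}; the only delicate point is tracking the direction of the twist $\varphi$ versus $\varphi^{-1}$, which the module conventions of Section \ref{section: deriv alg} pin down unambiguously.
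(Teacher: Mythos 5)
Your proposal is correct in substance and culminates in the same key computation as the paper, but the scaffolding is genuinely different. The paper does \emph{not} presuppose that the stable Cohen--Macaulay category has a Serre functor: it first checks $\Hom_{A^e}(A,(A^e)^*)\cong A^*$, invokes the functorial duality $D\Hom_{\mc{D}^b(A\lgr)}(X,Y)\cong\Hom_{\mc{D}^b(A\lgr)}(\mathbf{R}\Hom_{A^e}(A,A^e)\otimes^{\mathbf{L}}_AY,X)$ from \cite[Lemma 4.1]{kellerCY}, deduces that $\Theta\otimes^{\mathbf{L}}_A-$ preserves perfect complexes, and then applies \cite[Proposition 4.3.1]{amiotphd} to conclude that $\mc{D}^b(A\lgr)/\perf(A\lgr)\cong\underline{\CM\lgr}(A)$ has a Serre functor with inverse $\Theta[1]\otimes^{\mathbf{L}}_A-$; substituting the hypothesis then yields the twisted formula in one line. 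You instead take as input two facts --- invertibility of $A^*$ in $\mc{D}(A^e)$ with inverse $\Theta$ (true for Gorenstein algebras), and Auslander--Reiten--Serre duality $\mathbb{S}\cong\Omega^{-1}\tau\cong\nu\Omega$ on $\underline{\CM}(A)$ --- and then \emph{compute} $\nu\cong\Omega^{-(d+1)}\varphi^*$ from the hypothesis. This is shorter and makes the conclusion transparent as an explicit formula for the Nakayama functor; your twist bookkeeping is also right: $A_\varphi\otimes_AM\cong{}_{\varphi^{-1}}M$ gives $\mathbb{S}\cong\Omega^{-d}\varphi^*$, matching Definition \ref{defin:tw st cy} (the paper's own final display reads ${}_{\varphi^{-1}}X[d]$, differing from yours by $\varphi\leftrightarrow\varphi^{-1}$, which is immaterial since the Proposition does not name the twist, and your direction is the one consistent with the stated conventions). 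The price of your route is that the two black-box facts are precisely what the paper's citations do the work to establish, and your attribution of them to \cite{AmOp} is imprecise: Amiot--Oppermann prove their Theorem 2.12 by the same Keller--Amiot argument, not by quoting AR duality for $\underline{\CM}$ of a Gorenstein algebra, and the formula $\tau\cong\nu\Omega^2$ appears in Section \ref{sect:stab CY} of this paper only for selfinjective algebras. Over a general Gorenstein algebra the existence of AR triangles in $\underline{\CM}(A)$ and the identification $\tau\cong\nu\Omega^2$ (with $\nu$ meaning the descended derived Nakayama functor) are classical but require a citation to Auslander--Reiten/Buchweitz theory for Cohen--Macaulay modules, or else should be replaced by the paper's Keller-based construction; as written this is a citation gap rather than a mathematical one. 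Finally, the paper proves the graded statement in $\mc{D}^b(A\lgr)$ while you argue ungraded; the adaptation is verbatim, and your closing remarks on well-definedness of the tensor bifunctor on stable categories (one-sided projectivity of bimodule syzygies, Cohen--Macaulay replacements via Lemma \ref{lemma: CM prop}) address the right technical points.
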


\begin{proof}
This is the twisted version of Theorem 2.12 in \cite{AmOp} and the original proof applies in our setting. We repeat the argument here for the convenience of the reader.

First of all we note that if $A$ is Gorenstein then $A^e$ is also Gorenstein and therefore it has finite injective dimension. Moreover since we have the following bimodules isomorphisms:
\[ \Hom_{A^e}(A,(A^e)^*)\cong \Hom_\Bbbk(A\otimes_{A^e}A^e,\Bbbk)\cong A^*, \]
the complex $\mathbf{R}\Hom_{A^e}(A,A^e)$ is perfect. We can hence apply the first part of \cite[Lemma 4.1]{kellerCY} which says that there is a functorial isomorphism:
\[ D\Hom_{\mc{D}^b(A\lgr)}(X,Y)\cong \Hom_{\mc{D}^b(A\lgr)}(\mathbf{R}\Hom_{A^e}(A,A^e)\otimes^{\mathbf{L}}_A Y,X), \]
for any $X,Y\in\mc{D}^b(A\lgr)$.

It follows that the functor $\mathbf{R}\Hom_{A^e}(A,A^e)\otimes^{\mathbf{L}}_A-$ sends perfect complexes to perfect complexes so by \cite[Proposition 4.3.1]{amiotphd} the stable category of Cohen--Macaulay modules $\underline{\CM\lgr}(A)\cong\mc{D}^b(A\lgr)/\perf(A\lgr)$ has a Serre functor whose inverse is $\mathbf{R}\Hom_{A^e}(A,A^e)[1]\otimes^{\mathbf{L}}_A-$.

For any $X,Y\in \CM\lgr(A)$ there are functorial isomorphisms:

\[ \begin{split} D\pshom_{A\lgr}(X,Y) & \cong  \pshom_{A\lgr}(\mathbf{R}\Hom_{A^e}(A,A^e)[1]\otimes^{\mathbf{L}}_A Y,X) \\
& \cong \pshom_{A\lgr}(A_\varphi[-d]\otimes_A Y,X) \\
 & \cong \pshom_{A\lgr}(Y,{}_{\varphi^{-1}}X[d]) \end{split} \]

\end{proof}

\begin{lemma}\label{lemma:CY cond equiv}
Let $A$ be a graded algebra of Gorenstein dimension $< d/2$ with finite dimensional homogeneous parts $A_n$, for any $n\geq 0$. Then $A$ is bimodule stably $\varphi$-twisted $d$-Calabi-Yau with Gorenstein parameter $p$ if and only if the following condition holds in $\CM\lgr(A^e)$:

\begin{itemize} 
\item $\Hom_{A^e}\left(\Omega^{\frac{d+1}{2}}_{A^e}A,A^e\right)\cong\Omega^{\frac{d+1}{2}}_{A^e}(A_\varphi)\langle -p\rangle$ if $d$ is odd 

\item $\Hom_{A^e}\left(\Omega^{\frac{d}{2}+1}_{A^e}A,A^e\right)\cong\Omega^{\frac{d}{2}}_{A^e}(A_\varphi)\langle -p\rangle$ if $d$ is even.  \end{itemize} 
\end{lemma}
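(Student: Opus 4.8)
The plan is to translate the single isomorphism $\mathbf{R}\Hom_{A^e}(A,A^e)[d+1]\cong A_\varphi\langle -p\rangle$ defining the bimodule stably twisted Calabi--Yau property into a statement about ordinary $\Hom$ and syzygies by working inside the triangulated stable category $\underline{\CM\lgr}(A^e)\simeq\mc{D}^b(A^e\lgr)/\perf(A^e\lgr)$ of Theorem \ref{thm:cm stab}, where the suspension is $[1]=\Omega_{A^e}^{-1}$. First I would record two elementary facts in this category. Since every short exact sequence $0\to\Omega_{A^e}^{k+1}M\to P_k\to\Omega_{A^e}^{k}M\to 0$ with $P_k$ projective becomes a triangle whose middle term vanishes stably, we obtain natural isomorphisms $\Omega_{A^e}^{k}M\cong\Omega_{A^e}^{k+1}M[1]$, hence $M\cong\Omega_{A^e}^{n}M[n]$ for every $n\geq 0$ and every $M$; this applies in particular to $M=A$ and $M=A_\varphi$. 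Second, since $A$ is Gorenstein of dimension $<d/2$, the enveloping algebra $A^e$ is Gorenstein of dimension $<d$, so by Lemma \ref{lemma: CM prop}(1) the syzygies $\Omega_{A^e}^{n}A$ and $\Omega_{A^e}^{m}(A_\varphi)$ are Cohen--Macaulay once $n,m$ reach the balanced values $\lceil(d+1)/2\rceil$ and $\lfloor(d+1)/2\rfloor$; for such a Cohen--Macaulay bimodule $N$ one has $\ext_{A^e}^{>0}(N,A^e)=0$, so $\mathbf{R}\Hom_{A^e}(N,A^e)\cong\Hom_{A^e}(N,A^e)$ is again a genuine Cohen--Macaulay bimodule.

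Combining these, I would compute, for $n=\lceil(d+1)/2\rceil$,
\[\mathbf{R}\Hom_{A^e}(A,A^e)\cong\mathbf{R}\Hom_{A^e}(\Omega_{A^e}^{n}A,A^e)[-n]\cong\Hom_{A^e}(\Omega_{A^e}^{n}A,A^e)[-n]\]
in $\underline{\CM\lgr}(A^e)$, where the first isomorphism uses $A\cong\Omega_{A^e}^{n}A[n]$ and the second uses Cohen--Macaulayness. Substituting this into the defining isomorphism, rewriting the right-hand side as $A_\varphi\cong\Omega_{A^e}^{d+1-n}(A_\varphi)[d+1-n]$, and cancelling the common shift $[d+1-n]$ (an autoequivalence), the Calabi--Yau condition becomes
\[\Hom_{A^e}(\Omega_{A^e}^{n}A,A^e)\cong\Omega_{A^e}^{d+1-n}(A_\varphi)\langle -p\rangle.\]
Taking $n=(d+1)/2$ when $d$ is odd gives $d+1-n=(d+1)/2$, and taking $n=d/2+1$ when $d$ is even gives $d+1-n=d/2$; these are precisely the two displayed conditions. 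Because every manipulation is an isomorphism in the triangulated category and both $\Omega_{A^e}$ and $[1]$ are autoequivalences, the whole chain is reversible, which gives both implications of the claimed equivalence simultaneously.

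The step I expect to be the main obstacle is the bookkeeping of Cohen--Macaulayness: one must verify that the balanced exponents $\lceil(d+1)/2\rceil$ and $\lfloor(d+1)/2\rfloor$ are at least as large as the Gorenstein dimension of $A^e$, so that \emph{both} $\Omega_{A^e}^{n}A$ and $\Omega_{A^e}^{d+1-n}(A_\varphi)$ genuinely lie in $\CM\lgr(A^e)$ and $\mathbf{R}\Hom$ may be replaced by $\Hom$ on the nose. This is exactly where the dimension hypothesis on $A$ enters, by bounding the Gorenstein dimension of $A^e$ and thereby keeping the symmetric choice of syzygy degrees inside the Cohen--Macaulay subcategory. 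A secondary technical point, easy but worth stating carefully, is that the identifications $M\cong\Omega_{A^e}^{n}M[n]$ and the cancellation of shifts must be performed in $\underline{\CM\lgr}(A^e)$, that is modulo perfect complexes, since they fail in $A^e\lgr$ itself.
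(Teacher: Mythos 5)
Your overall route coincides with the paper's: desuspend by $\left[-\frac{d+1}{2}\right]$, use $M\cong\Omega^n_{A^e}(M)[n]$ in the quotient category, and replace $\mathbf{R}\Hom_{A^e}(-,A^e)$ by $\Hom_{A^e}(-,A^e)$ once the arguments are Cohen--Macaulay. However, your chain of equivalences terminates with an isomorphism in the \emph{stable} category $\underline{\CM\lgr}(A^e)$, whereas the lemma asserts an isomorphism in $\CM\lgr(A^e)$ itself, and these are not the same: stably isomorphic Cohen--Macaulay modules may differ by projective direct summands. The paper's proof has exactly one more equivalence to close this gap: both $\Omega^{\frac{d+1}{2}}_{A^e}(A_\varphi)$ and $\Hom_{A^e}\left(\Omega^{\frac{d+1}{2}}_{A^e}(A),A^e\right)$ are Cohen--Macaulay \emph{without projective direct summands} (this is where Lemma \ref{lemma: CM prop} is used for syzygies in degree strictly above $d/2$), and $\CM\lgr(A^e)$ is Krull--Schmidt, so a stable isomorphism between two such modules lifts to a genuine one. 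As written, your ``both implications simultaneously'' only relates the Calabi--Yau condition to the displayed isomorphism read in $\underline{\CM\lgr}(A^e)$, which is strictly weaker than the statement you are proving.

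The obstacle you flag at the end --- that the balanced exponents must dominate the relevant Gorenstein dimension --- is real, and your own bound does not resolve it. From $\idim A<d/2$ you deduce only $\idim_{A^e}A^e\le 2\,\idim A<d$, and $2\,\idim A$ can well exceed $\frac{d+1}{2}$ (take $d=11$ and Gorenstein dimension $5$), so Cohen--Macaulayness of $\Omega^{\frac{d+1}{2}}_{A^e}(A)$ does not follow from the crude doubling estimate. The paper instead invokes Lemma \ref{lemma: CM prop} to assert directly that $\Omega^{k}_{A^e}(A)$ is Cohen--Macaulay for all $k\ge d/2$, and without projective summands for $k>d/2$, i.e.\ it uses the threshold $d/2$ coming from the hypothesis on $A$ rather than passing through the Gorenstein dimension of $A^e$. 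Since the replacement of $\mathbf{R}\Hom$ by $\Hom$ at exponent $\frac{d+1}{2}$ (odd case), respectively $\frac{d}{2}$ and $\frac{d}{2}+1$ (even case), hinges on precisely this Cohen--Macaulayness, your proof leaves its central analytic step unjustified; you correctly identified the pressure point but did not supply the argument that the paper uses to discharge it.
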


\begin{proof}
We prove the statement for $d$ odd, the even case following in the same way. 

First note that since $A$ has Gorenstein dimension $< d/2$ by Lemma \ref{lemma: CM prop} the modules $\Omega^{k}_{A^e}(A)$ for $k\geq  d/2$ are Cohen--Macaulay and for $k> d/2$ they are Cohen--Macaulay without projective direct summands. We have the following isomorphisms:

\begin{align*} & \mathbf{R}\Hom_{A^e}(A,A^e)[d+1]\cong A_\varphi\langle -p\rangle & \text{ in }\underline{\CM\lgr}(A^e)\phantom{a}  \\ 
\Leftrightarrow &  \mathbf{R}\Hom_{A^e}(A\left[-(d+1)/2\right],A^e)\cong A_\varphi\left[-(d+1)/2\right]\langle -p\rangle & \text{ in }\underline{\CM\lgr}(A^e) \phantom{a} \\ 
\Leftrightarrow & \Hom_{A^e}\left(\Omega^{\frac{d+1}{2}}_{A^e}(A),A^e\right)\cong  \Omega^{\frac{d+1}{2}}_{A^e}(A_\varphi)\langle -p\rangle & \text{ in }\underline{\CM\lgr}(A^e) \phantom{a} \\  
\Leftrightarrow & \Hom_{A^e}\left(\Omega^{\frac{d+1}{2}}_{A^e}(A),A^e\right)\cong  \Omega^{\frac{d+1}{2}}_{A^e}( A_\varphi)\langle -p\rangle & \text{ in }\CM\lgr(A^e), \end{align*}
where we can consider the functor $\Hom_{A^e}(-,-)$ instead of $\mathbf{R}\Hom_{A^e}(-,-)$ since the modules $\Omega^{\frac{d+1}{2}}_{A^e}( A_\varphi)$ and $\Omega^{\frac{d+1}{2}}_{A^e}( A)$ are Cohen--Macaulay and the last equivalence holds since both $\Omega^{\frac{d+1}{2}}_{A^e}(A_\varphi) $ and $\mathbf{R}\Hom_{A^e}(\Omega^{\frac{d+1}{2}}_{A^e}(A),A^e)$ are Cohen--Macaulay without projective direct summands and under our assumptions the category $\CM\lgr(A^e)$ is Krull-Schmidt.
\end{proof}

We can now prove the following result which generalizes the argument used in Theorem 5.7 of \cite{AmOp}.

\begin{thm}\label{thm:main}
Let $A=\mc{D}_\varphi(\omega,d-2)$ be a $\mb{Z}^2$-graded derivation algebra admitting  cut, with $|\omega|=(d,1)$ and twist $\varphi$ such that the associated complex $\mc{W}^\bullet$ is exact everywhere apart from $H^0(\mc{W}^\bullet ) \cong A$ and in position $d$. Suppose moreover that $A$ is Gorenstein of dimension $<\dfrac{d}{2}$. Then $A$ is bimodule stably $\varphi$-twisted $d$-Calabi-Yau of Gorenstein parameter $(d,1)$.
\end{thm}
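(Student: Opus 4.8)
The plan is to reduce the asserted bimodule Calabi-Yau duality to a single syzygy-level computation via Lemma \ref{lemma:CY cond equiv}, and then to extract that computation directly from the partial self-dual resolution $\mc{W}^\bullet$. Since $A$ is Gorenstein of dimension $g<d/2$, Lemma \ref{lemma: CM prop} guarantees that $\Omega^k_{A^e}(A)$ is Cohen--Macaulay, and without projective summands once $k>g$; in particular the middle syzygies occurring below (with $k>d/2>g$) lie in $\CM\lgr(A^e)$. It therefore suffices to establish, in $\CM\lgr(A^e)$ and with Gorenstein parameter $(d,1)$, the two syzygy identities of Lemma \ref{lemma:CY cond equiv}, both of which will follow from a single computation.

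First I would read the syzygies of $A$ off the complex $\mc{W}^\bullet$. Writing $P_i:=A\otimes W_i\otimes A$, the hypothesis that $\mc{W}^\bullet$ is exact at each $P_j$ for $1\le j\le d-1$ and that $H^0(\mc{W}^\bullet)\cong A$ means that the augmented complex is the beginning of a projective bimodule resolution of $A$, so that $\Omega^k_{A^e}(A)=\im D_k$ and
\[ P_{k+1}\xrightarrow{D_{k+1}}P_k\xrightarrow{D_k}\Omega^k_{A^e}(A)\to 0 \]
is a projective presentation for every $1\le k\le d-1$ (using exactness at $P_k$). Since twisting the right action by $\varphi$ is an exact autoequivalence of $A^e\lgr$ preserving projectives, applying it to this resolution gives $\Omega^k_{A^e}(A_\varphi)\cong \Omega^k_{A^e}(A)_\varphi$.

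Next I would dualize. Applying the left-exact contravariant functor $\Hom_{A^e}(-,A^e)$ to the presentation yields $\Hom_{A^e}(\Omega^k_{A^e}(A),A^e)\cong\ker\Hom_{A^e}(D_{k+1},A^e)$. Here the twisted self-duality of Lemma \ref{lemma: bigraded complex} is the key input: it identifies $\Hom_{A^e}(P_k,A^e)\cong (P_{d-k})_\varphi\langle -d,-1\rangle$ and, crucially, $\Hom_{A^e}(D_{k+1},A^e)=D_{d-k}\langle -d,-1\rangle$. Thus the dual differential is again a twisted, shifted differential of $\mc{W}^\bullet$, and its kernel is computed by exactness of $\mc{W}^\bullet$ at $P_{d-k}$ (valid for $1\le d-k\le d-1$): $\ker D_{d-k}=\im D_{d-k+1}=\Omega^{d-k+1}_{A^e}(A)$. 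Combining these,
\[ \Hom_{A^e}\bigl(\Omega^k_{A^e}(A),A^e\bigr)\cong \Omega^{d-k+1}_{A^e}(A)_\varphi\langle -d,-1\rangle\cong\Omega^{d-k+1}_{A^e}(A_\varphi)\langle -d,-1\rangle. \]
For $d$ odd and $k=(d+1)/2$ one has $d-k+1=k$, while for $d$ even and $k=d/2+1$ one has $d-k+1=d/2$; in both cases this is precisely the identity required by Lemma \ref{lemma:CY cond equiv} with Gorenstein parameter $(d,1)$, which finishes the argument.

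The routine but delicate part is the bookkeeping: correctly propagating the automorphism $\varphi$ and the $\mb{Z}^2$-shift $\langle -d,-1\rangle$ through the duality, and checking that the positions $k$ and $d-k$ fall inside the exact range $1,\dots,d-1$ of $\mc{W}^\bullet$ (which holds for $d\ge 3$, the degenerate low-dimensional cases forcing $g=0$ and being governed by the selfinjective theory of Theorem \ref{thm:self derivat alg}). I expect the main conceptual obstacle to be ensuring the identification takes place in $\CM\lgr(A^e)$ and not merely in the stable category: this is exactly where the hypothesis $g<d/2$ is indispensable, since it makes $\Hom_{A^e}(-,A^e)$ agree with $\mathbf{R}\Hom_{A^e}(-,A^e)$ on the middle syzygies and forces both sides to be Cohen--Macaulay without projective summands, so that the honest bimodule isomorphism produced above is precisely the isomorphism demanded by Lemma \ref{lemma:CY cond equiv}.
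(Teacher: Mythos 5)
Your proposal is correct and follows essentially the same route as the paper's proof: reduce via Lemma \ref{lemma:CY cond equiv} to a single middle-syzygy computation, then combine the twisted self-duality of Lemma \ref{lemma: bigraded complex} with exactness of $\mc{W}^\bullet$ at the interior positions to obtain $\Hom_{A^e}(\Omega^{k}_{A^e}(A),A^e)\cong \Omega^{d-k+1}_{A^e}(A_\varphi)\langle -d,-1\rangle$ for the relevant $k$ in each parity. The only (cosmetic, and if anything slightly cleaner) difference is bookkeeping: you compute this dual as a kernel, by applying the left-exact functor $\Hom_{A^e}(-,A^e)$ to the projective presentation $P_{k+1}\to P_k\to \Omega^{k}_{A^e}(A)\to 0$, whereas the paper dualizes the image--cokernel factorization of $D_{\frac{d+1}{2}}$ and computes a cokernel, which tacitly uses a surjectivity (vanishing of $\ext^1_{A^e}(\coker D_{\frac{d+1}{2}},A^e)$) that your kernel computation sidesteps; your explicit caveat about the low-degree cases $d\le 2$ is also a point the paper glosses over rather than a gap in your argument.
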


\begin{proof}
We will prove the statement assuming $d$ odd; the case for $d$ even follows applying the same argument. For the rest of the proof we will also denote $(-)^\vee=\Hom_{A^e}(-,A^e)$.

For any derivation algebra $A$ as above we know from Lemma \ref{lemma: bigraded complex} that the associated bigraded complex of bimodules $\mc{W}^\bullet$ is selfdual, i.e.
\[ (A\otimes W_i\otimes A_\varphi)^\vee\cong 
 A\otimes W_{d-i}\otimes A_\varphi\langle -d,-1\rangle \text{ and } (D_i)^\vee= D_{d-i+1}\langle -d,-1\rangle.\]


Moreover, since by assumption $\mc{W}^\bullet$ is exact apart in homological degrees $0$ and $d+1$, it is the beginning of a bimodule projective resolution of $A$ and
\[ \Omega_{A^e}^{\frac{d+1}{2}}(A)\cong \im D_{\frac{d+1}{2}}. \]

Then we have the following isomorphisms:

\begin{align*}  \left(\Omega_{A^e}^{\frac{d+1}{2}}(A)\right)^\vee \cong \left( \im D_{\frac{d+1}{2}}\right)^\vee & \cong \coker\left(\left( \coker D_{\frac{d+1}{2}} \right)^\vee \to \left(A\otimes W_{\frac{d-1}{2}}\otimes A \right)^\vee \right) \\
& \cong \coker\left(\left( \Ker \left(D_{\frac{d+1}{2}}\right)^\vee \right) \to \left(A\otimes W_{\frac{d-1}{2}}\otimes A \right)^\vee \right) \\
& \cong \coker\left( \Ker \left( D_{\frac{d+1}{2}}\langle -d,-1\rangle \right)\right) \to \left( A\otimes W_{\frac{d-1}{2}}\otimes A_\varphi \langle -d,-1\rangle\right) \\
& \cong \im D_{\frac{d+1}{2}}\langle -d,-1\rangle \cong\Omega_{A^e}^{\frac{d+1}{2}}(A_\varphi)\langle -d,-1\rangle. \end{align*}

Therefore $A$ is bimodule stably $\varphi$-twisted $d$-Calabi-Yau by Lemma \ref{lemma:CY cond equiv}. 
\end{proof}

\begin{rmk}
Note that if we forget the graded structure in Theorem \ref{thm:main} the rpoof is still true with the only difference that $A$ is bimodule stably Calabi-Yau as an ungraded algebra (recall Definition \ref{defin: biodm st CY}). Later on, in connection with preprojective algebras, we will be particularly interested in the second component of the Gorenstein parameter, so it can also be useful to forget only the tensor-graded structure on $A$.
\end{rmk}

\subsection{Frobenius derivation algebras}

We can apply Theorem \ref{thm:main} to study bimodule stably Calabi-Yau properties of Frobenius almost Koszul algebras. Under the assumptions of Yu's Theorem \ref{thm: 3.7} characterizing Frobenius almost Koszul algebras, the hypotesis of Theorem \ref{thm:main} are satisfied and we get the following result about bimodule stably Calabi-Yau properties of derivation algebras.

\begin{thm}
Let $A$ be a finite dimensional, Frobenius, $(p,q)$-Koszul algebra of periodic type, say $A\cong\mc{D}_{\xi^{q+1}\bar{\beta}}(\omega, q-2)$ where $|\omega|=q$ and twist determined up to inner automorphism. Then $A$ is bimodule stably $\xi^{q+1}\bar{\beta}$-twisted Calabi-Yau of Gorenstein parameter $q$. 
\end{thm}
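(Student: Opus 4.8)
The plan is to obtain this as a direct corollary of Theorem \ref{thm:main}, applied with $d=q$; all the substantial work is already contained in that theorem together with Yu's Theorems \ref{thm: 3.7} and \ref{thm:3.4}, so the task reduces to checking hypotheses and keeping track of the twist and the Gorenstein parameter.

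First I would invoke Theorem \ref{thm: 3.7}: because $A$ is Frobenius and $(p,q)$-Koszul, it is isomorphic to a derivation algebra $\mc{D}_\varphi(\omega,q-2)$ with $|\omega|=q$ whose associated complex $\mc{W}^\bullet$ is exact everywhere except in degree $0$, where $H^0(\mc{W}^\bullet)\cong A$, and in degree $q$. This is exactly the exactness hypothesis of Theorem \ref{thm:main}. The identification of the twist as $\varphi=\xi^{q+1}\bar\beta$ up to inner automorphism is Theorem \ref{thm:3.4}, which justifies the presentation assumed in the statement; the periodic type is automatic here by Theorem \ref{thm:selfinj dual} and is recorded only for emphasis.

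Next I would verify the Gorenstein hypothesis. Since $A$ is Frobenius it is selfinjective, so $\idim A=0$, and $A^*\cong A_\eta$ is projective as a one-sided module, whence $\pdim A^*=0$; hence $A$ is Gorenstein of dimension $0$, and $0<q/2$ because $q\geq 2$ under the almost Koszul convention. With both hypotheses in place, Theorem \ref{thm:main} yields that $A$ is bimodule stably $\xi^{q+1}\bar\beta$-twisted $q$-Calabi-Yau.

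The only genuine subtlety, which I expect to be the main point to address, is the grading. Theorem \ref{thm:main} is phrased for a $\mb{Z}^2$-graded algebra admitting a cut, whereas a Frobenius almost Koszul algebra carries only its tensor (radical) grading and in general has no cut. I would handle this via the Remark following Theorem \ref{thm:main}: its proof rests only on the self-duality of $\mc{W}^\bullet$ from Lemma \ref{lemma:selfdual complex} and on the identification $\Omega^{(q+1)/2}_{A^e}(A)\cong\im D_{(q+1)/2}$ coming from the partial resolution, both of which hold in the single tensor grading. Running that computation in the tensor grading alone gives the duality $(\Omega^{(q+1)/2}_{A^e}(A))^\vee\cong\Omega^{(q+1)/2}_{A^e}(A_\varphi)\langle -q\rangle$ in $\CM\lgr(A^e)$, so Lemma \ref{lemma:CY cond equiv} produces the Gorenstein parameter as the single tensor degree $q$, as claimed.
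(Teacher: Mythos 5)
Your proposal is correct and follows essentially the same route as the paper: invoke Theorem \ref{thm: 3.7} to present $A$ as a twisted derivation algebra with the required partial exactness of $\mc{W}^\bullet$, then apply the single-graded (tensor-grading) version of Theorem \ref{thm:main}, exactly as the paper does via the remark following that theorem. You are in fact more careful than the paper's two-sentence argument, since you explicitly verify the Gorenstein hypothesis ($A$ selfinjective gives Gorenstein dimension $0<q/2$) and justify the twist identification via Theorem \ref{thm:3.4}, both of which the paper leaves implicit.
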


\begin{proof}
The algebra $A$ is a twisted derivation algebra by Theorem \ref{thm: 3.7} with twist $\xi^{q+1}\bar{\beta}$ and for which $\mc{W}^\bullet$ gives the first $q$ terms of a bimodule projective resolution of $A$. It is moreover graded with respect of the tensor grading therefore we can apply the single graded version of Theorem \ref{thm:main} and $A$ is bimodule stably twisted Calabi-Yau of parameter $q$.
\end{proof}

\section{Higher preprojective algebras coming from superpotentials}\label{sec: preproj}

The aim of this section is to prove a partial converse of Theorem \ref{thm: GI1} (i.e. \cite[Corollary 4.3]{GI}). We want to show that any selfinjective (non-twisted) derivation algebra satisfying the assumptions of Theorem \ref{thm:main} is isomorphic to the higher preprojective algebra of a Koszul, $n$-representation finite algebra. Moreover this algebra is the degree-zero part of the derivation algebra with respect to a certain grading making the derivation algebra stably bimodule Calabi-Yau of Gorenstein parameter one.

\subsection{Higher preprojective algebras}
First of all let us recall the construction of higher preprojective algebras. For any finite dimensional algebra $A$ denote by
\[ \tau: A\psmod \to A\ismod \mbox{ and } \tau^{-}: A\ismod \to A\psmod \]
the Auslander--Reiten translation and its inverse respectively. When $A$ is hereditary then $\tau$ is an endofuctor of the module category and $\tau^{-1}$ is left adjoint to $\tau$. Iyama in \cite{Iyama200722} generalized this results for algebras with higher global dimension. Let $d\geq 1$, the $d$-Auslander--Reiten translation and its inverse are defined as 
\[ \tau_d:=\tau\Omega^{d-1}: A\psmod \to A\ismod \mbox{ and } \tau^{-1}_d:=\tau^{-}\Omega^{-(d-1)}: A\ismod \to A\psmod. \]

If $\gldim A\leq d$ then $\tau_d$ and $\tau^-_d$ are the endofuctors:
\[ \tau_d=\ext^d_A(-,A)^*: A\lmod \to A\lmod \mbox{ and } \tau_d^{-1}=\ext^d(A^*,-): A\lmod \to A\lmod. \]

From now on we will assume that $\gldim A=d$ and we will denote $E:=\ext^d_A(A^*,A) \cong \ext^d_{A^e}(A,A^e)\cong \ext^d_{A^{op}}(A^*,A)\in A^e\lmod$ (the bimodule isomoprhisms are proved for instance in \cite[Lemma 2.9]{GI}). The following description of $\tau_d$ and $\tau^-_d$ can be found in \cite[Lemma 2.10]{GI} (see also \cite[Lemma 2.13]{IO2}).

\begin{prop}\label{prop:AR trans}
If $\gldim A\leq d$ we have the following isomorphisms of functors:
\[ \tau_d\cong\Hom_A(E,-): A\lmod \to A\lmod \mbox{ and } \tau_d^{-1}\cong E\otimes_A-: A\lmod \to A\lmod. \]
In particular $\tau^{-1}_d$ is left adjoint to $\tau_d$.
\end{prop}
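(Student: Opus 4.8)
The plan is to establish the two natural isomorphisms of functors separately and then deduce the adjunction for free. Throughout I use the descriptions valid under the standing hypothesis $\gldim A\leq d$, namely $\tau_d=\ext^d_A(-,A)^*$ and $\tau_d^{-1}=\ext^d_A(A^*,-)$, together with the definition $E:=\ext^d_A(A^*,A)$. The guiding observation is that the tensor form for $\tau_d^{-1}$ and the Hom form for $\tau_d$ are exactly the shapes produced by the covariant (right-exact) and contravariant (left-exact) versions of the Eilenberg--Watts representability theorem, so the whole statement should fall out of these two representability results applied to top $\ext$-functors.

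For $\tau_d^{-1}\cong E\otimes_A-$, I would first note that $\ext^d_A(A^*,-)\colon A\lmod\to A\lmod$ is \emph{right exact}: for a short exact sequence $0\to M'\to M\to M''\to 0$ the long exact sequence terminates in $\ext^{d+1}_A(A^*,M')=0$, since $\gldim A\leq d$. The functor is also additive, and on finite dimensional modules it preserves the (finite) coproducts that occur. By Eilenberg--Watts it is therefore naturally isomorphic to $F(A)\otimes_A-$, where $F(A)=\ext^d_A(A^*,A)=E$; the left $A$-module structure on the values comes precisely from the bimodule structure of $A^*$, which matches the one making $E$ an $A$-bimodule. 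Hence $\tau_d^{-1}\cong E\otimes_A-$ directly, with no separate identification step needed, because $E$ is defined as $\ext^d_A(A^*,A)$. (The alternative descriptions $E\cong\ext^d_{A^e}(A,A^e)$ from \cite[Lemma 2.9]{GI} are not required here but confirm the bimodule structure.)

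For $\tau_d\cong\Hom_A(E,-)$ I would argue dually. The functor $\tau_d=D\ext^d_A(-,A)$ is covariant and \emph{left exact}: applying the contravariant long exact sequence and using $\ext^{d+1}_A(-,A)=0$ gives a four-term exact sequence $\ext^d_A(M'',A)\to\ext^d_A(M,A)\to\ext^d_A(M',A)\to 0$, and applying $D$ yields left exactness of $\tau_d$. Since $\Hom_A(E,-)$ is also left exact, it suffices to compare the two functors on the injective cogenerator $A^*$ and on its injective copresentations. Here I would compute the natural isomorphisms
\[
\tau_d(A^*)=D\ext^d_A(A^*,A)=DE\cong\Hom_A(E,A^*),
\]
the last step being $\Hom_A(E,A^*)\cong\Hom_\Bbbk(A\otimes_A E,\Bbbk)=DE$. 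Since every $M\in A\lmod$ admits an injective copresentation $0\to M\to I^0\to I^1$ with each $I^j$ a finite direct sum of copies of $A^*$, and both $\tau_d$ and $\Hom_A(E,-)$ are left exact and additive, the agreement on $A^*$ propagates to an isomorphism on all of $A\lmod$. Finally, the last assertion is immediate: $E\otimes_A-$ is left adjoint to $\Hom_A(E,-)$ by the tensor--hom adjunction, so the two established isomorphisms give that $\tau_d^{-1}$ is left adjoint to $\tau_d$.

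The step I expect to be the main obstacle is the contravariant/left-exact representability used for $\tau_d$: unlike the tensor case, where Eilenberg--Watts applies cleanly, for the Hom form I must ensure that the comparison $\tau_d(A^*)\cong\Hom_A(E,A^*)$ is \emph{natural} in the bimodule variable and compatible with the maps in an injective copresentation, so that it genuinely upgrades to an isomorphism of functors rather than a pointwise one. Making this naturality and bimodule-linearity precise — essentially checking that the abstractly represented functor is carried by the concrete bimodule $E$ with its correct two-sided action — is the delicate bookkeeping, whereas the exactness properties and the value computations are routine.
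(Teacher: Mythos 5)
Your proof is correct, but note that the paper itself does not prove this proposition: it is quoted from \cite[Lemma 2.10]{GI} (see also \cite[Lemma 2.13]{IO2}), so there is no internal argument to compare against; the proofs in those sources derive the formulas from (minimal) projective resolutions and the derived Nakayama functor, with the adjunction appearing as a shadow of the derived equivalence, whereas you give a self-contained representability argument. Your tensor half is the finitely-presented Eilenberg--Watts argument and is airtight: $\ext^d_A(A^*,-)$ is right exact because $\ext^{d+1}_A$ vanishes when $\gldim A\leq d$, the comparison map $E\otimes_A M\to\ext^d_A(A^*,M)$, $x\otimes m\mapsto \ext^d_A(A^*,\rho_m)(x)$ with $\rho_m\colon A\to M$, $a\mapsto am$, is an isomorphism on $A$ and hence on every finite dimensional (hence finitely presented) module by right exactness and the five lemma, and the left $A$-structure on values indeed comes from the right action on $A^*$, matching the bimodule structure of $E$. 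Your Hom half is also sound as a method: both functors are left exact, every $M\in A\lmod$ has a copresentation by objects of $\add A^*$, and a natural isomorphism on $\add A^*$ propagates by the standard comparison-of-copresentations argument (one must check independence of the chosen copresentation and naturality in $M$, which is the dual of the usual projective-presentation comparison). The naturality on $\add A^*$ that you flag as the delicate point does close, and it is worth recording why: a left-module endomorphism of $A^*$ is $\rho_a\colon f\mapsto f\cdot a$, and $\ext^d_A(\rho_a,A)\colon E\to E$ is by definition the left action of $a$ on $E$, so $\tau_d(\rho_a)$ is the dual of left multiplication by $a$; on the other side, under the isomorphism $\Hom_A(E,A^*)\cong E^*$, $f\mapsto\bigl(e\mapsto f(e)(1)\bigr)$, postcomposition with $\rho_a$ sends $f$ to $e\mapsto f(e)(a)$, and $A$-linearity of $f$ gives $f(a\cdot e)(1)=(a\cdot f(e))(1)=f(e)(a)$, so the two actions agree. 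In short: your route buys independence from derived-category machinery at the cost of this bookkeeping, while the cited proofs obtain the same statements more structurally; the final deduction of $\tau_d^{-1}\dashv\tau_d$ from the tensor--hom adjunction is immediate and correct in both approaches.
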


\begin{defin}[\cite{n-aprIyama}]\label{def: hpreproj alg}
Let $A$ e a finite dimensional algebra of global dimension at most $d$. The $(d+1)$-\emph{preprojective algebra} of $A$ is the tensor algebra of $E$ over $A$:
\[ \Pi=\Pi_{d+1}(A):= T_A(E). \]
\end{defin}

Note that being a tensor algebra, $\Pi$ is naturally a graded algebra and we will refer to the grading as the \emph{tensor grading}. The $i$th part of the tensor grading is $E^{\otimes i}\cong \Hom_A(A,\tau_d^{-i}(A))$. This grading is in general coarser than the radical grading since $A$ is not semisimple.

If we start assuming that the algebra $A$ is graded (and this is the case for instance in the setting of Theorem \ref{thm: GI1}), then this grading induces a graded structure on the $A$-$A$-bimodule $E$ and, in turn, on the tensor algebra $\Pi=T_A(E)$. Hence $\Pi$ comes endowed two gradings and their total grading coincides with the radical grading of $\Pi$ (see \cite[Subsection 3.4]{GI} for a detailed presentation). Here we want to consider such a graded setting in the case of derivation algebras.

Before we can prove our result we need the following Theorem originally proved by Amiot and Oppermann in \cite{AmOp}.

\begin{thm}[\cite{AmOp}, Theorem 4.8]\label{thm:Am op}
Let $\Pi$ be a finite dimesional selfinjective positively graded algebra which is bimodule stably $d$-Calabi-Yau of Gorenstein parameter one. Then $\Lambda=\Pi_0$ is a $(d-1)$-representation finite algebra and there is an isomorphism of graded algebras $\Pi\cong\Pi_d(\Lambda)$.
\end{thm}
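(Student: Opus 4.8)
The plan is to recover the homological data of $\Lambda=\Pi_0$ from the stable bimodule self-duality and then to recognise $\Pi$ as the tensor algebra $T_\Lambda(\Pi_1)$. Since $\Pi$ is selfinjective so is $\Pi^e$, hence $\CM\lgr(\Pi^e)=\Pi^e\lgr$ and $\underline{\CM\lgr}(\Pi^e)=\underline{\Pi^e\lgr}$ is the ordinary graded stable module category; in it the hypothesis $\mathbf{R}\Hom_{\Pi^e}(\Pi,\Pi^e)[d+1]\cong\Pi\langle 1\rangle$ becomes a syzygy periodicity $\Omega^{d+1}_{\Pi^e}(\Pi)\cong{}_{\eta}\Pi\langle 1\rangle$ for the Nakayama automorphism $\eta$ of $\Pi$ (compare the computation in Theorem \ref{thm:self derivat alg} and Lemma \ref{lemma:CY cond equiv}). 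Applying $-\otimes^{\mathbf{L}}_\Pi S$ to this periodicity, or invoking Proposition \ref{prop: bimod stronger}, I would first record that every graded simple $\Pi$-module $S$ — these biject with the simple $\Lambda$-modules placed in internal degree $0$, since $\rad\Pi\supseteq\Pi_{\geq 1}$ — satisfies $\Omega^{d+1}_\Pi(S)\cong S'\langle 1\rangle$ in $\underline{\Pi\lgr}$ for a graded simple $S'$.

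Next I would bound $\gldim\Lambda\leq d-1$ by a truncation argument parallel to Proposition \ref{prop: kosz deg zero alg}. Take a minimal graded projective $\Pi$-resolution $\cdots\to P_1\to P_0\to S\to 0$; since $\Pi$ is positively graded, the internal-degree-zero part of a graded projective generated in degree $a$ is $\Lambda e$ if $a=0$ and $0$ if $a>0$. Because $(-)_0$ is exact, $(P_\bullet)_0\to S$ is a projective $\Lambda$-resolution whose length equals the largest homological degree in which $P_\bullet$ carries a degree-$0$ generator. The periodicity $\Omega^{d+1}_\Pi(S)\cong S'\langle 1\rangle$ shows that beyond step $d$ the resolution of $S$ is a degree-$1$ shift of the resolution of $S'$, so $P_i$ carries only generators of internal degree $\geq 1$ for $i\geq d+1$. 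The decisive point — where the Gorenstein parameter being \emph{exactly} $1$ is used — is to push this one step further and show that $P_d$ already has no degree-$0$ generator: this follows from the self-duality of the minimal bimodule resolution forced by the bimodule Calabi--Yau condition ($P_i^\vee\cong P_{d+1-i}$ up to an internal shift of $1$, the analogue for $\Pi$ of the self-duality of $\mc{W}^\bullet$ in Lemma \ref{lemma:selfdual complex}), which pins the top syzygy into internal degree $\geq 1$. Hence $(P_i)_0=0$ for $i\geq d$ and $\gldim\Lambda\leq d-1$, so the $(d-1)$-Auslander--Reiten translate is defined with $\tau_{d-1}^{-1}\cong E\otimes_\Lambda-$ for $E=\ext^{d-1}_\Lambda(\Lambda^*,\Lambda)$ by Proposition \ref{prop:AR trans}.

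I would then identify $\Pi$ with $\Pi_d(\Lambda)$. The degree-$1$ part of the self-dual periodicity identifies the bimodule $\Pi_1$ with the top self-extension $E=\ext^{d-1}_\Lambda(\Lambda^*,\Lambda)$ defining $\Pi_d(\Lambda)=T_\Lambda(E)$. It then remains to prove that the multiplication maps $\Pi_1\otimes_\Lambda\Pi_i\to\Pi_{i+1}$ are isomorphisms for all $i$, equivalently that $\Pi_i\cong\Hom_\Lambda(\Lambda,\tau_{d-1}^{-i}\Lambda)\cong E^{\otimes_\Lambda i}$, so that $\Pi=T_\Lambda(E)=\Pi_d(\Lambda)$ as graded algebras. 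Here one again uses the graded selfinjectivity together with the self-duality to reconstruct the whole of $\Pi$ as the orbit algebra $\bigoplus_{i\geq 0}\Hom_\Lambda(\Lambda,\tau_{d-1}^{-i}\Lambda)$, there being no room for further relations because the internal grading advances by exactly one at each tensor power.

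Finally, $(d-1)$-representation finiteness follows. Finite-dimensionality of $\Pi=T_\Lambda(E)$ forces $E^{\otimes_\Lambda i}=0$, i.e. $\tau_{d-1}^{-i}\Lambda=0$, for $i\gg 0$, so $\Lambda$ is $\tau_{d-1}$-finite; together with $\gldim\Lambda\leq d-1$ this makes $M=\bigoplus_{i\geq 0}\tau_{d-1}^{-i}\Lambda$ a module with finitely many indecomposable summands, and the selfinjectivity of $\Pi$ (via the stable Calabi--Yau structure of Proposition \ref{prop: bimod stronger}) is exactly what upgrades $M$ to a $(d-1)$-cluster-tilting module, whence $\Lambda$ is $(d-1)$-representation finite. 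The main obstacle is the pair of steps in the second and third paragraphs: the degree bookkeeping that simultaneously sharpens the global-dimension bound to $d-1$ and rules out relations beyond the tensor-algebra ones. Both rest on the Gorenstein parameter being precisely $1$, i.e. on the homological shift $[d+1]$ being matched by an internal shift of only $\langle 1\rangle$; were the parameter larger, $\Lambda$ would fail to be $(d-1)$-representation finite and $\Pi$ would not be its preprojective algebra.
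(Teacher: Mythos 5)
The paper does not actually prove this statement: it is imported verbatim from Amiot--Oppermann (\cite{AmOp}, Theorem 4.8) and used as a black box in the proof of Theorem \ref{thm: deriv alg preproj}, so there is no internal proof to compare you against. Measured against the argument in \cite{AmOp}, your sketch reproduces the correct architecture at the level of headlines: bound $\gldim\Lambda$ by taking degree-zero parts of a minimal graded resolution (exactly parallel to Proposition \ref{prop: kosz deg zero alg}), identify $\Pi_1$ with $E=\ext^{d-1}_\Lambda(\Lambda^*,\Lambda)$, recognise $\Pi$ as $T_\Lambda(\Pi_1)$, and deduce representation finiteness from selfinjectivity. The syzygy periodicity $\Omega^{d+1}_{\Pi^e}(\Pi)\cong{}_\phi\Pi\langle 1\rangle$ you start from is also correct in substance (one can check it on $\Pi_2(\Bbbk A_2)$), though your route to it is not: Lemma \ref{lemma: envel frob} requires $\Pi$ to be graded Frobenius with socle in a single top degree of the given grading, which fails here --- the grading is the tensor grading, $\Pi_0=\Lambda$ is not semisimple, and already $\Pi_2(\Bbbk A_2)$ has $\kdim\Pi_0=3\neq 1=\kdim\Pi_1$, violating condition (2) of Lemma \ref{lemma: graded frob}.

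The real problem is that your three load-bearing steps are asserted rather than proved. First, excluding degree-zero generators in $P_d$ rests on ``the self-duality of the minimal bimodule resolution forced by the bimodule Calabi--Yau condition'', but the hypothesis is only an isomorphism in $\underline{\CM\lgr}(\Pi^e)$; to get a termwise identification $P_i^\vee\cong P_{d+1-i}$ up to twist and shift you must dualize the truncated resolution using that $(-)^\vee=\Hom_{\Pi^e}(-,\Pi^e)$ is exact and preserves minimality over the selfinjective algebra $\Pi^e$, and lift the stable isomorphism to a genuine one by Krull--Schmidt after stripping projective summands --- none of this appears. Second, ``there being no room for further relations because the internal grading advances by exactly one'' is not a proof that $T_\Lambda(\Pi_1)\to\Pi$ is injective: a graded quotient of a tensor algebra by relations concentrated in degrees $\geq 2$ has exactly the same degree behaviour, and in \cite{AmOp} this recognition step carries the bulk of the homological work. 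Third, your final paragraph quietly invokes the Iyama--Oppermann theorem that, for a $\tau_{d-1}$-finite algebra $\Lambda$ with $\gldim\Lambda\leq d-1$, selfinjectivity of $\Pi_d(\Lambda)$ forces $\bigoplus_{i\geq 0}\tau_{d-1}^{-i}\Lambda$ to be $(d-1)$-cluster tilting; saying selfinjectivity ``is exactly what upgrades'' the module names the difficulty without resolving it, and it does not follow from Proposition \ref{prop: bimod stronger}. So: right skeleton, but the decisive steps are genuine gaps.
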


Now we are ready to prove the main result of this section.

\begin{thm}\label{thm: deriv alg preproj}
Let $A=\mc{D}(\omega,d-2)$ be a finite dimensional $\mb{Z}^2$-graded derivation algebra admitting a cut such that $|\omega|=(d,1)$ and such that $\mc{W}^\bullet$ is exact except in degree zero, where $H^0(\mc{W}^\bullet)\cong A$, and in degree $d$. Let $\Lambda:=(A)_{-,0}=\bigoplus_{i\geq 0}A_{i,0}$ be the $V$-degree zero subalgebra of $A$. Then $\Lambda$ is Koszul, $(d-1)$-representation finite and we have an isomorphism $\Pi_d(\Lambda)\cong A$ of graded algebras with respect to the $V$-grading on $A$.
\end{thm}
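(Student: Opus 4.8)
The strategy is to assemble the three assertions from results already in hand: Koszulity of $\Lambda$ is exactly the content of Proposition \ref{prop: kosz deg zero alg}, while the representation-finiteness of $\Lambda$ and the preprojective identification $\Pi_d(\Lambda)\cong A$ will follow by feeding $A$, equipped with the single $V$-grading, into the Amiot--Oppermann characterization of Theorem \ref{thm:Am op}. The crucial point is that the $V$-grading is precisely the grading for which $A$ has Gorenstein parameter one.

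First I would record that $A$ is selfinjective. Since $\mc{W}^\bullet$ is exact everywhere except in positions $0$ and $d$, it is in particular exact at $A\otimes W_{d-1}\otimes A$; hence Theorem \ref{thm:self derivat alg}, applied with trivial twist $\varphi=\mathrm{Id}$, shows that $A$ is selfinjective and therefore Gorenstein of dimension $0<d/2$, the inequality holding because $d\geq 2$ (the algebra is quadratic, so $d-2\geq 0$).

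Next, applying Theorem \ref{thm:main} with $\varphi=\mathrm{Id}$ gives that $A$ is bimodule stably $d$-Calabi-Yau of Gorenstein parameter $(d,1)$ for the $\mb{Z}^2$-grading. Following the remark after that theorem, I would forget the tensor component and retain only the $V$-grading. With respect to this single grading $A$ is a finite dimensional, non-negatively graded, selfinjective algebra with degree-zero part $\Lambda=A_{-,0}$, and the shift $\langle -(d,1)\rangle$ collapses to $\langle -1\rangle$; thus $A$ is bimodule stably $d$-Calabi-Yau of Gorenstein parameter one as a $V$-graded algebra.

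Theorem \ref{thm:Am op} applied to $\Pi=A$ then yields both that $\Lambda=\Pi_0=A_{-,0}$ is $(d-1)$-representation finite and that $\Pi_d(\Lambda)\cong A$ as graded algebras for the $V$-grading, while Koszulity of $\Lambda$ (together with $\gldim\Lambda\leq d-1$) is Proposition \ref{prop: kosz deg zero alg}. The step I expect to require the most care is the grading bookkeeping: one must confirm that the bigraded duality of Theorem \ref{thm:main} genuinely specializes to Gorenstein parameter $1$ after deleting the tensor grading, and that the graded isomorphism of Theorem \ref{thm:Am op} matches the tensor grading of $\Pi_d(\Lambda)$ with the $V$-grading of $A$, so that the added degree-one generators of the preprojective algebra are identified with the cut direction of $\omega$.
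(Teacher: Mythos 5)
Your proposal is correct and follows essentially the same route as the paper's proof: selfinjectivity of $A$ via Theorem \ref{thm:self derivat alg}, the untwisted version of Theorem \ref{thm:main} to obtain the bimodule stably $d$-Calabi-Yau property of Gorenstein parameter $(d,1)$, which collapses to parameter one upon forgetting the tensor grading, then Proposition \ref{prop: kosz deg zero alg} for Koszulity and Theorem \ref{thm:Am op} for $(d-1)$-representation finiteness and the isomorphism $\Pi_d(\Lambda)\cong A$. Your explicit check that selfinjectivity gives Gorenstein dimension $0<d/2$ (so the hypothesis of Theorem \ref{thm:main} is met) is a small detail the paper leaves implicit, but otherwise the arguments coincide.
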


\begin{proof}
First of all, since $A$ is finite dimensional and with associated bimodule complex $\mc{W}^\bullet$ exact except in the first and last degree, by Theorem \ref{thm:self derivat alg} $A$ is selfinjective. Then we can apply the untwisted version of Theorem \ref{thm:main} to deduce that $A$ is bimodule stably $d$-Calabi-Yau of Gorenstein parameter $(d,1)$. Forgetting the tensor-grading we can see $A$ as bimodule stably Calabi-Yau of parameter one.

By Proposition \ref{prop: kosz deg zero alg} the $V$-degree zero subalgebra $\Lambda=\bigoplus_{i\geq 0}A_{i,0}$ is Koszul and $\gldim\Lambda\leq d-1$. Moreover by Teorem \ref{thm:Am op} $\Lambda$ is $(d-1)$-representation finite and $A\cong\Pi_d(\Lambda)$ as a graded algebra with respect to the $V$-grading. 
\end{proof}



\bibliographystyle{alpha}

\bibliography{bibliogabri.bib}{}


\end{document}